 \newtheorem{thm}{Theorem}[section]
 \newtheorem{cor}[thm]{Corollary}
 \newtheorem{lem}[thm]{Lemma}
 \newtheorem{prop}[thm]{Proposition}
 \theoremstyle{definition}
 \theoremstyle{remark}
 \newtheorem{rem}[thm]{Remark}
 \numberwithin{equation}{section}
 \def\idty{{\mathchoice {\mathrm{1\mskip-4mu l}} {\mathrm{1\mskip-4mu l}} %
{\mathrm{1\mskip-4.5mu l}} {\mathrm{1\mskip-5mu l}}}}
\newcommand{\R}{{\mathbb R}}
\newcommand{\bC}{{\mathbb C}}
\newcommand{\Z}{{\mathbb Z}}
\newcommand{\supp}{\operatorname{supp}}
\newcommand{\om}{\omega}
\newcommand{\la}{\lambda}
\newcommand{\eps}{\varepsilon}
\newcommand{\norm}[1]{\left\Vert#1\right\Vert}
\newcommand{\abs}[1]{\left\vert#1\right\vert}
\newcommand{\dist}{{\ensuremath{\mathrm{dist}}}}
\newcommand{\sign}{\mathrm{sign}}
\newcommand{\diam}{\mathrm{diam}}
\newcommand{\be}{\begin{equation}}
\newcommand{\ee}{\end{equation}}
\begin{document}
\title{The Klein-Gordon Equation on $\Z^2$ and the Quantum Harmonic Lattice}
\author{Vita Borovyk}
\address{Department of Mathematics, University of Cincinnati,
Cincinnati, OH 45221-0025}
\email{Vita.Borovyk@uc.edu}
\author{Michael Goldberg}
\thanks{The second author received support from NSF grant DMS-1002515 during the 
preparation of this work.}
\address{Department of Mathematics, University of Cincinnati,
Cincinnati, OH 45221-0025}
\email{Michael.Goldberg@uc.edu}
\thanks{Both authors would like to thank Jonathan Goodman for sharing his expertise
on several occasions.}

\begin{abstract}
The discrete Klein-Gordon equation on a two-dimensional square lattice satisfies
an $\ell^1 \mapsto \ell^\infty$ dispersive bound with polynomial decay rate
$|t|^{-3/4}$.  We determine the shape of the light cone for any choice of the
mass parameter and relative propagation speeds along the two coordinate axes.
Fundamental solutions experience the least dispersion along four caustic lines
interior to the light cone rather than along its boundary, and decay
exponentially to arbitrary order outside the light cone.  The overall geometry of
the propagation pattern and its associated dispersive bounds are independent of
the particular choice of parameters.  In particular there is no bifurcation of the
number or type of caustics that are present.

The discrete Klein-Gordon equation is a classical analogue of the quantum
harmonic lattice.  In the quantum setting, commutators of time-shifted
observables experience the same decay rates as the corresponding
Klein-Gordon solutions, which depend in turn on the relative location of the
observables' support sets.

\end{abstract}

\maketitle

\section{Introduction}

The wave equation $u_{tt} - \Delta u = 0$ on $\R^{2+1}$ is explicitly solved via
Poisson's formula, in which initial data $u(x,0) = g(x)$, $u_t(x,0) = h(x)$
determines the unique solution
\begin{equation*}
u(x,t) = \frac{\sign(t)}{2\pi}\int_{|y-x| < |t|} \frac{h(y) + 
\frac{1}{t}(g(y) + \nabla g(y)\cdot(y-x))}{\sqrt{t^2-|y-x|^2}}\,dy
\end{equation*}
at any $t \not= 0$.  More generally the Klein-Gordon equation
$u_{tt} - \Delta u + m^2 u = 0$ with the same initial data has the solution
\begin{equation*}
u(x,t) = \frac{\sign(t)}{2\pi}\int_{|y-x| < |t|} \Big(h(y) + 
\frac{1}{t}(g(y) + \nabla g(y)\cdot(y-x))\Big)
\frac{\cos(m\sqrt{t^2 - |y-x|^2}\,)}{\sqrt{t^2-|y-x|^2}}\,dy.
\end{equation*}
When $m = 0$ the two equations coincide.  It is clear in these formulas 
that the propagator kernel is radially symmetric, and that all information
from the initial data travels at finite speed.  Both equations also
possess a well-known dispersive property that $|u(x,t)| \leq C|t|^{-1/2}$
provided the initial data are sufficiently smooth and decay at infinity.

This paper investigates the propagation patterns and dispersive bounds
for solutions to a family of discrete Klein-Gordon equations on
$\Z^2\times\R^1$ with distinct coupling strength along the two coordinate directions
as suggested below.
\begin{figure}[ht]
\begin{picture}(120,75)
\thinlines
\qbezier[60](15,65)(65,65)(115,65)
%connections
\qbezier[60](15,25)(65,25)(115,25)
\qbezier[60](25,75)(25,45)(25,15)
\qbezier[60](65,15)(65,45)(65,75)
\qbezier[60](105,15)(105,45)(105,75)
\put(43,28){$\lambda_1$}
\put(12,43){$\lambda_2$}
\put(43,68){$\lambda_1$}
\put(68,43){$\lambda_2$}
\put(85,28){$\lambda_1$}
\put(85,68){$\lambda_1$}
\put(108,43){$\lambda_2$}

\put(25,25){\circle*{3}}
%bold points
\put(25,65){\circle*{3}}
\put(65,25){\circle*{3}}
\put(65,65){\circle*{3}}
\put(105,25){\circle*{3}}
\put(105,65){\circle*{3}}
\end{picture}
%\caption{$\Z^2$ lattice}
\label{lattice}
\end{figure}

The discrete Klein-Gordon equation for this system with Cauchy initial data is 
\begin{equation} \label{eq:discKleinGordon}
\left\{
\begin{aligned}
&u_{tt}(x,t) \underbrace{- {\textstyle \sum_{j=1}^2}\lambda_j 
\big(u(x+e_j,t) + u(x-e_j,t) - 2u(x,t)\big)
+ \omega^2 u(x,t)}_{Hu} = 0 \\
&u(x,0) = g(x) \\
&u_t(x,0) = h(x)
\end{aligned} \right.
\end{equation}
with $\omega, \lambda_1, \lambda_2 >0$ fixed parameters.  It has a conserved
energy functional
\begin{equation}
E(t) = \frac{1}{2}\sum_{x \in \Z^2} \Big(u_t^2(x,t) + \omega^2 u^2(x,t)
+ \sum_{j=1}^2 \lambda_j(u(x+e_j,t) - u(x,t))^2\big)
\end{equation}
and (by analogy with the continuous setting) the values of $\lambda_j$
suggest propagation speeds of $\sqrt{\lambda_j}$ along their respective
coordinate directions.  We have chosen the letter $\omega$ for the mass
parameter in order to highlight connections between~\eqref{eq:discKleinGordon}
and the quantum harmonic lattice system.  Those connections are explored in
further detail in section~\ref{sec:quantum}.

The one-dimensional discrete wave equation provides a certain degree of inspiration;
when $\lambda = 1$
its fundamental solution is expressed in terms of the Bessel functions
$J_{|y-x|}(t)$ (see \cite{marchioro1978}). 
There are three main asymptotic regimes.  For $|t| \gg |y-x|$
there is oscillation with amplitude $|t|^{-1/2}$. When $|t| \ll |y-x|$ the
propagator is nonzero (hence there is some rapid transfer of information)
with exponential decay at spatial infinity on the order of 
$((2|y-x|)^{-1}et)^{|y-x|}$.
For $|y-x| = |t| + O(t^{1/3})$ the propagator kernel reaches its maximum size
of approximately $|t|^{-1/3}$.  This bound is most easily obtained by applying 
van der Corput's lemma to the Fourier representation of $J_{|y-x|}(t)$ .

Analysis of the discrete Schr\"odinger equation on $\Z$ yields a similar structure
for its fundamental solution.  Moreover, the Schr\"odinger equation on $\Z^d$
separates into a product of one-dimensional solutions, and therefore attains a maximum
value comparable to $|t|^{-d/3}$ near the corners of a box with side lengths
$\lambda_j |t|$.

Unfortunately the discrete wave and Klein-Gordon equations in higher dimensions 
do not separate variables in the same way.  The fundamental
solution can still be determined as a superposition of plane waves, with size
bounds in the different regimes resulting from stationary phase principles.
Isotropic wave equations ($\lambda_j = 1$, $\omega = 0$) in two and three
dimensions were analyzed by Schultz~\cite{schultz98},
with a curious
set of outcomes.  In addition to the expected wavefront expanding radially at
$|y-x| \sim |t|$, there is a secondary region of reduced dispersion traveling at
somewhat lower speed.  In two dimensions the region lies along an 
astroid-shaped curve with diameter $\sqrt{2}\,|t|$; in three dimensions
the region follows a cusped and pointed surface of a similar nature.
Surprisingly, some global dispersive bounds are dominated by behavior when
$|y-x|$ belongs to the secondary set, even though this occurs well inside
the overall propagation pattern.

In the two-dimensional discrete Klein-Gordon equation, 
each plane wave $u_k(x) := e^{ik\cdot x}$ %, $k \in [-\pi, \pi]^2$
satisfies
$Hu_k = \gamma^2(k) u_k$, with the dispersion relation
\[
\gamma^2(k) = \omega^2 + \sum_j 2\lambda_j(1-\cos k_j)
\]
and $k$ ranging over the fundamental domain
$[-\pi, \pi]^2$.  The solution of~\eqref{eq:discKleinGordon}
is given formally by
\begin{equation*}
u(x,t) = \cos(t\sqrt{H})g + \frac{\sin(t\sqrt{H})}{\sqrt{H}} h
\qquad {\rm and} \qquad
u_t(x,t) = - \sqrt{H}\,\sin(t\sqrt{H}) g + \cos(t\sqrt{H}) h.
\end{equation*}
The operators involved act on a plane wave $u_k$ by multiplication by
$\cos(t\gamma(k))$,
$\frac{\sin(t\gamma(k))}{\gamma(k)}$, and $-\gamma(k)\sin(t\gamma(k))$,
hence the fundamental solutions of~\eqref{eq:discKleinGordon} in physical
space will be the inverse Fourier transform of those three functions.
%By Fourier inversion, the fundamental solutions of~\eqref{eq:discKleinGordon}
%will be the inverse Fourier transforms of $\cos(t\gamma(k))$, 
%$\frac{\sin(t\gamma(k))}{\gamma(k)}$, and $\gamma(k)\sin(t\gamma(k))$.
For all practical purposes these are oscillatory integrals over the torus
$k \in [-\pi,\pi]^2$ of the form given in~\eqref{eq:intx},
 whose asymptotic behavior is governed by critical points
of the phase function $t\gamma(k) \pm x\cdot k$.

Three distinct regimes again emerge:
critical points are absent for $ |x| \gg |t|$ and exponential decay
is observed by following the analytic continuation of $\gamma(k)$ into
$\{[-\pi,\pi] + i\R\}^2$.  Quantitative exponential bounds are given in
Theorems~\ref{thm:exp1} and~\ref{thm:exp2}.  
For generic values of $(x,t)$ inside the
``light cone'' (i.e. $x = t\nabla \gamma(k)$ for some $k$),
stationary phase arguments lead to a bound of $|t|^{-1}$.
Along the boundary of the light cone, and within the secondary region introduced
above, degenerate stationary phase estimates yield polynomial time decay with
a fractionally smaller exponent.  For fixed $t \not= 0$ there is a global bound
of order $|t|^{-3/4}$ with maxima occurring near the four
cusps of the astroid curve.  This is a faster rate of decay than the discrete
Schr\"odinger equation on $\Z^2$, where separation of variables leads to a
$|t|^{-2/3}$ bound instead.  Further details about the structure of the Klein-Gordon
propagators are summarized in Theorem~\ref{thm:poly} and its corollaries.
%The precise order of decay is computed
%from higher-order Taylor approximations of $\gamma$
%at the critical point, as summarized in Theorem~\ref{thm:poly}.
%For this reason the cusps of the astroid curve see
%distinctive behavior that
%features prominently in the eventual global dispersive estimate.

Combining the pointwise $|t|^{-3/4}$ bound with exponential decay outside
of the light cone yields a family of estimates for the Klein-Gordon propagator
as a map from $\ell^1(\Z^2)$ to $\ell^p(\Z^2)$.  These are useful for establishing
global existence of small solutions to the nonlinear discrete Klein-Gordon equation
as suggested by~\cite{mielkepatz}.  For power-law nonlinearities
\begin{equation} \label{eq:discNLKG}
\left\{
\begin{aligned}
&u_{tt}(x,t) - {\textstyle \sum_{j=1}^2}\lambda_j 
\big(u(x+e_j,t) + u(x-e_j,t) - 2u(x,t)\big)
+ \omega^2 u(x,t) = |u|^{\beta - 1}u \\
&u(x,0) = g(x) \\
&u_t(x,0) = h(x)
\end{aligned} \right.
\end{equation}
our results imply that small initial data produce global solutions when $\beta > 4$.
We introduce this application immediately after stating all the relevant bounds,
at the end of Section~\ref{sec:results}.

The endpoint case $\omega = 0$ corresponds to the discrete wave equation, and it
introduces a new type of asymptotic behavior at the boundary of the light cone
because the phase function $\gamma(k)$ develops an absolute-value singularity at
the origin.  Schultz computed the resulting
dispersive estimate in~\cite{schultz98} under the further assumption
$\lambda_1 = \lambda_2$.  We provide a restatement of these bounds
for general $\lambda_j$ in Section~\ref{sec:wave}.
Detailed calculations in a neighborhood of the light-cone boundary 
are identical to the ones in~\cite{schultz98} and we do not repeat them here.

Technical notes:
The problem of generalizing van der Corput's lemma to two and higher
dimensions has a long history in harmonic analysis.  It is roughly equivalent
to determining the area of level sets or constructing a resolution of
singularities for smooth functions.  Schultz~\cite{schultz98} computed dispersive
bounds for the isotropic wave equation by exhibiting an explicit unfolding for
each of the fold and cusp singularities that arise.  The same methods could be
employed here, though the dependence on the coupling parameters $\lambda_j$
is unduly complicated. We instead follow Varchenko's 1976 exposition~\cite{Var}
which permits estimation of the oscillatory integral directly from the Taylor series
of the phase function provided the coordinate system is sufficiently well
"adapted."
Modern techniques for the general resolution of singularities may be
needed for applications where the domain has a more intricate periodic
structure than $\Z^2$, and especially in high-dimensional settings.
In those cases one may employ methods and results by
Greenblatt~\cite{greenblatt} in two dimensions or
Collins, Greenleaf, and Pramanik~\cite{CoGrPr2010} in higher dimensions.
At one point in Corollary~\ref{cr:OmegakCk} we also invoke a recent 
result by Ikromov and M\"uller~\cite{IkMu}
regarding the stability of degenerate integrals under linear
perturbation of the phase.

The fact that only fold and cusp singularities appear in this problem
is noteworthy in itself.  
Unlike in the continuous setting, varying parameters $\lambda_1$, $\lambda_2$
is not equivalent to performing a diagonal linear transformation on $x$ because
the domain $\Z^2$ and its Fourier dual both lack a dilation symmetry.  
%The
%available time-dilation symmetry maps $(\omega, \lambda_1, \lambda_2, x, t)
%\mapsto (r\omega, r^2\lambda_1, r^2\lambda_2, x, r^{-1}t)$, so it
%suffices to assume that $\omega = 0,1$.
We show in this paper that
the dispersion pattern for~\eqref{eq:discKleinGordon} retains the same
topological and geometric structure found in~\cite{schultz98} for all values of
$\omega, \lambda_j > 0$.  In particular there is no choice of parameters
that generates exceptional degeneracy or bifurcation of the phase function
singularities which determine the dispersive estimate.
Separately we show that interactions outside the
light cone are all subject to an exponential bound, and that
exponential bounds of any desired order are achieved by setting $|y-x|/|t|$
sufficiently large.  The latter statement is akin to (and readily implies)
Lieb-Robinson bounds (cf. \cite{lieb1972}, \cite{marchioro1978}, \cite{butta2007},
\cite{eisert2008}, \cite{raz2009}, \cite{nachtergaele2009}, \cite{NS3}) for the
corresponding quantum harmonic lattice.  The background and details of this
application are presented in Section~\ref{sec:quantum}.

%Estimates of this type first appeared in~\cite{lieb1972}, where Lieb and Robinson showed exponential decay of interactions outside the light cone for quantum spin systems. The similar results were later obtained for other kinds of systems, see for example, the already mentioned paper \cite{marchioro1978}. There were a lot of activity in this direction recently, c.f. \cite{butta2007}, \cite{raz2009}, \cite{eisert2008}, \cite{nachtergaele2009} and references therein. See also \cite{NS3} for a review of this topic. 

The paper is organized as follows: Section~\ref{sec:results} enumerates the precise statements and 
illustrations of our main result, describes an application to small-data global existence for
some nonlinear discrete Klein-Gordon equations, 
and remarks about linear behavior in the wave-equation ($\omega = 0$) limit.
A proof of the main theorems is sketched out in the
following section, assuming a number of propositions about the critical
points of $t\gamma(k) - x\cdot k$.  Section~\ref{sec:quantum} translates our results
about the discrete Klein-Gordon equation into dynamical properties of the
quantum harmonic lattice. All of the relevant assertions about properties of
$\gamma(k)$ are finally proved in the conluding section, using more or less bare-handed
calculation of its Taylor series expansions.
%The concluding section contains 
%detailed analysis of the Taylor series expansions of $\gamma$ in order
%to verify the relevant assertions about its critical points made earlier. 

\section{Main results}  \label{sec:results}
In this section we state the result describing the long-time behavior of the solution of \eqref{eq:discKleinGordon}. We start with estimates of a slightly more general oscillatory integral and our main result is based on these estimates.

Choose a set of values $\omega, \lambda_1, \lambda_2 > 0$.  For the function
\begin{equation}																											\label{gammaABk}
\gamma(k) = \bigg(\omega^2 + \sum_{j=1}^2 2\lambda_j(1-\cos k_j)\bigg)^{1/2}, \qquad k=(k_1, k_2) \in [-\pi, \pi]^2,
\end{equation}
introduce
\begin{equation}																											\label{eq:intx}
I(t, x, \eta) = \frac 1 {(2\pi)^2} \int_{[-\pi, \pi]^2} e^{i(k \cdot x - t\gamma(k))} \eta(k)\, dk,
\end{equation}
where $x \in \Z^2$, $t \in \R$, and $\eta$ is a smooth test function on $[-\pi, \pi]^2$ with periodic boundary conditions.
The asymptotic behavior of oscillatory integrals is generally influenced by
local considerations, in which case $\eta(x)$ may be assumed to have compact
support in a fundamental domain of $\R^2/2\pi\Z^2$
(for example as part of a partition of unity).
In that case $\eta$ can be extended by zero to a function on all of
$\R^2$ and one may define for all $x \in \R^2$
\begin{equation}																											\label{eq:Itilde}
\tilde{I}(t, x, \eta) = \frac 1 {(2\pi)^2} \int_{\R^2} e^{i(k \cdot x - t\gamma(k))} \eta(k)\, dk,
\end{equation}
up to a unimodular constant whose value is exactly 1 if $x \in \Z^2$.
It is often convenient to consider $x$ of the form $x = v t$, with $v$ a fixed vector in
$\R^2$ (representing velocity), so that the integral \eqref{eq:Itilde} can be written as
\begin{equation}											\label{eq:intv}
\tilde{I}(t, vt, \eta) = \frac 1 {(2\pi)^2} \int_{\R^2} e^{it\phi_v(k)} \eta(k)\, dk,
\ \ {\rm where}\ \ \phi_v(k) := k\cdot v - \gamma(k).
\end{equation} 
%an oscillatory integral with the phase function $k \cdot v - 2\gamma(k)$. 
Estimates on $I(t, x, \eta)$ will follow from restricting the corresponding
bound on~\eqref{eq:intv} to examples with $x = vt \in \Z^2$.

The long-time behavior of \eqref{eq:intv} is dictated by the highest level of degeneracy of the phase within the support of $\eta$. In the absence of critical points, integration by parts multiple times yields a rapid (faster than polynomial) decay. In the case where critical points are present but are non-degenerate, the standard stationary phase argument provides $|t|^{-1}$ decay for the integral. Finally, if there are degenerate critical points, the decay is slower and more careful analysis is needed to determine its exact order. It is easy to see that for any point $k^* \in [-\pi, \pi]^2$, there is a choice of the velocity $v$ such that $k^*$ is a critical point of $\phi_v(k)$ (namely, $v = \nabla \gamma(k^*)$). The order of degeneracy of the phase at that point is determined by second and higher-order derivatives of $\gamma$, as the linear component is canceled by subtracting $k^* \cdot v$.  We introduce a partition of $[-\pi, \pi]^2$ with respect to the degeneracy order of $\gamma$,
\begin{equation}
[-\pi, \pi]^2 = K_1 \cup K_2 \cup K_3,
\end{equation}
where
\begin{align}
\notag K_1 &= \{k \in [-\pi, \pi]^2: \det D^2 \gamma(k) \ne 0\},\\
\label{eq:setK} K_2 &= \{k \in [-\pi, \pi]^2: \det D^2 \gamma(k) = 0, \; (\xi \cdot \nabla)^3\gamma(k) \ne 0\},\\
\notag K_3 &= \{k \in [-\pi, \pi]^2: \det D^2 \gamma(k) = 0, \; (\xi \cdot \nabla)^3\gamma(k) = 0\}.
\end{align}
In the definition of $K_2$ and $K_3$, $\xi$ stands for an eigenvector of the $2\times 2$ matrix $D^2 \gamma(k)$ corresponding to the zero eigenvalue. 

The analysis of Section~\ref{sec:phasefunc} allows us to describe the structure of this partition in detail.
Proposition~\ref{prop:rank1} notes in particular that the rank of $D^2\gamma(k)$ is never zero, so the
direction of $\xi$ is always well defined.
\begin{lem} \label{lem:Kpicture}
For every choice of $\omega, \lambda_1, \lambda_2 > 0$, 
the sets $K_i$, $i = 1, 2, 3$, defined in \eqref{eq:setK} possess the following properties.
%$\Phi_1$ defined in \eqref{eq:Phi1}, and $K^*$ be the same as in Remark~\ref{rm:Kstar}. Then 
\begin{itemize}
\item $K_3$ consists of four points related by mirror symmetry across the coordinate axes.
\item $K_2$ consists of two closed curves, one around the origin and the other around the point $(\pi, \pi)$,
with the four points of $K_3$ removed from the latter curve.
\item $K_1 = [-\pi, \pi]^2 \setminus (K_3 \cup K_2)$. This set consists of three open regions: the interior of the small closed curve around zero, the interior of the closed curve around the point $(\pi, \pi)$, and the area of the compactified torus enclosed between these two curves. 
%\item $K_2 = \Phi_1 \setminus K^*$. This set consists of two closed curves (one around the origin, the other around $(\pi, \pi)$) with four points removed.  
%\item $K_3 = K^*$. This set consists of the four points. 
\end{itemize}
The structure of the partition is displayed in Figure~\ref{K123} (similar to Figure~\ref{PhiKfig}).
\begin{figure}[ht]
\begin{picture}(160,160)
\put(0,70){\vector(1,0){160}}											%axes
\put(165,70){$k_1$}
\put(80,0){\vector(0,1){150}}
\put(65,147){$k_2$}
\thinlines
\qbezier(30,120)(30,120)(130,120)									%square
\qbezier(30,20)(30,20)(130,20)
\qbezier(30,120)(30,120)(30,20)
\qbezier(130,120)(130,120)(130,20)
\put(73,124){$\pi$}
\put(134,75){$\pi$}
\put(14,75){$-\pi$}
\put(65,10){$-\pi$}
\thicklines
\qbezier(30,55)(65,55)(65,20)											%arches
\qbezier(30,85)(65,85)(65,120)
\qbezier(95,20)(95,55)(130,55)
\qbezier(95,120)(95,85)(130,85)
\qbezier(70,70)(70,60)(80,60)											%circle in the middle
\qbezier(90,70)(90,60)(80,60)	
\qbezier(90,70)(90,80)(80,80)	
\qbezier(80,80)(70,80)(70,70)	
\put(112,88){\circle*{4}}														%bold points
\put(112,51){\circle*{4}}
\put(48,88){\circle*{4}}
\put(48,51){\circle*{4}}
\end{picture}
\caption{Sets $K_1$, $K_2$, $K_3$}
\label{K123}
\end{figure}
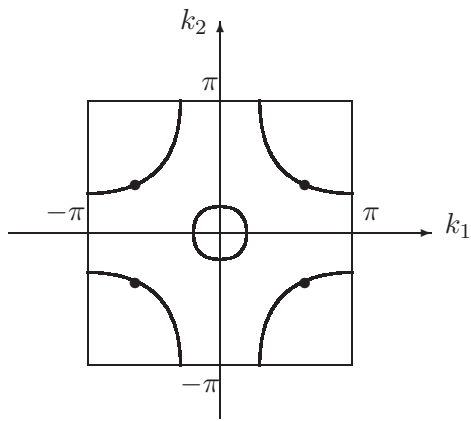 
\end{lem}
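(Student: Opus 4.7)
The plan is to reduce the lemma to a combination of elementary sign analysis of a single polynomial and the more delicate estimates of Section~\ref{sec:phasefunc} that locate the cubic degeneracies. A direct computation gives
\[
\gamma^4 \det D^2\gamma(k) \;=\; \lambda_1\lambda_2\,F(k), \qquad F(k) \;:=\; \gamma^2\cos k_1\cos k_2 - \lambda_1\sin^2 k_1\cos k_2 - \lambda_2\cos k_1\sin^2 k_2.
\]
Since $\omega > 0$ keeps $\gamma$ strictly positive, the degeneracy locus $K_2 \cup K_3$ coincides with $\{F=0\}$, and the substitution $c_j := \cos k_j$ exhibits $F$ as a low-degree polynomial in $(c_1,c_2)$.

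The first step is a topological classification of $\{F=0\}$. The symmetries $k_j \mapsto -k_j$ preserve $F$ (and the eigenvector $\xi$ after suitable normalization), so it suffices to analyze the first-quadrant square $[0,\pi]^2$ and reflect. Evaluating $F$ at the four corners gives $F(0,0) = \omega^2$ and $F(\pi,\pi) = \omega^2 + 4\lambda_1 + 4\lambda_2$, both positive, and $F(\pi,0) = -(\omega^2+4\lambda_1)$, $F(0,\pi) = -(\omega^2+4\lambda_2)$, both strictly negative. Restricting $F$ to each boundary edge produces a quadratic in the varying $\cos k_j$ whose roots multiply to $1$ and sum to a quantity exceeding $2$, so each edge carries exactly one interior zero of $F$. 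Proposition~\ref{prop:rank1} asserts $\nabla F \ne 0$ on $\{F=0\}$, and the implicit function theorem presents the level set as a disjoint union of smooth simple curves. The four boundary crossings can be paired in three ways; the two possibilities that would place positive and negative corner values on the same side of an arc are ruled out by sign considerations and by the fact that a $1$-manifold cannot self-intersect. Closed components interior to $(0,\pi)^2$ would require $F$ to have a local extremum there, which is excluded by the critical-point analysis of Section~\ref{sec:phasefunc}. The remaining configuration consists of one arc separating $(0,0)$ from $(\pi,0)$ and $(0,\pi)$, and one arc separating $(\pi,\pi)$ from them; symmetrization assembles these into two simple closed curves on the compactified torus: an inner loop around the origin and an outer loop around $(\pi,\pi)$.

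To identify $K_3 \subset K_2 \cup K_3$, I would compute $(\xi \cdot \nabla)^3\gamma$ along each loop. Since $D^2\gamma$ has rank exactly $1$ on $K_2 \cup K_3$, the eigenvector $\xi$ is determined algebraically from the Hessian entries, and $(\xi\cdot\nabla)^3\gamma$ becomes an explicit trigonometric expression whose zero set on each curve can be counted directly. The four-fold reflection symmetry partitions these zeros into orbits of size dividing $4$, and the assertion that $K_3$ consists of exactly four points—all on the outer loop—amounts to showing that the inner loop carries no such zero while the outer loop carries a single four-element orbit.

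The main obstacle is precisely this last sign count, which requires tracking the direction of $\xi(k)$ as $k$ traverses each curve uniformly in $(\omega,\lambda_1,\lambda_2)$ and is the content of the expansions in Section~\ref{sec:phasefunc}. Once the topology of $\{F=0\}$ and the location of the cubic zeros are in hand, the three-region description of $K_1$ is immediate from the Jordan curve theorem on the torus.
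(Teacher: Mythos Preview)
Your plan matches the paper's in spirit: the paper also reduces everything to the polynomial $F(a,b)=ab\omega^2-\lambda_1 b(1-a)^2-\lambda_2 a(1-b)^2$ (your $c_j$ are the paper's $a,b$), studies the zero set $\Gamma_1=\{F=0\}$, and then defers the identification of $K_3$ to Corollary~\ref{cor:Kstar}.  Where you diverge is in the topological classification of $\Gamma_1$, and there your sketch has a real gap.

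After reducing to the quadrant $[0,\pi]^2$ you correctly find one zero on each edge and corner signs $+,-,+,-$ at $(0,0),(\pi,0),(\pi,\pi),(0,\pi)$.  Non-self-intersection of a $1$-manifold rules out the ``crossing'' pairing, but the two remaining pairings --- arcs isolating the positive corners $(0,0),(\pi,\pi)$ versus arcs isolating the negative corners $(\pi,0),(0,\pi)$ --- are \emph{both} consistent with the sign pattern, so your ``sign considerations'' do not pick the right one.  Similarly, Proposition~\ref{prop:rank1} says only that $D^2\gamma$ never vanishes identically; it does not by itself yield $\nabla_{(c_1,c_2)}F\neq 0$ on $\{F=0\}$, and Section~\ref{sec:phasefunc} never analyzes interior critical points of $F$ off the zero set, so your exclusion of closed interior components is also unjustified as stated.

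The paper's Lemma~\ref{lem:Fprop} avoids all of this by pushing your own edge argument through the whole square: for every fixed $a\in[-1,1]$ the function $b\mapsto F(a,b)$ is quadratic with product of roots equal to $1$, hence has at most one root in $[-1,1]$, so $\Gamma_1$ is globally the graph of a single function $b=B_F(a)$.  The identities $b\,\partial_b F|_{\Gamma_1}=\lambda_2 a(1-b^2)$ and $a\,\partial_a F|_{\Gamma_1}=\lambda_1 b(1-a^2)$ then give smoothness and $B_F'<0$ directly, and the explicit endpoint values $0<B_F(-1)<B_F(1)<1$ force the graph into the two arcs shown in Figure~\ref{Gamma1fig}; pulling back through $(a,b)=(\cos k_1,\cos k_2)$ gives the two loops around $0$ and $(\pi,\pi)$ with no ambiguity.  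You already have the key observation (roots multiply to $1$) --- applying it on every line rather than just the boundary edges repairs the gap and reproduces the paper's argument.
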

%\begin{proof}
%The statements follow immediately from equation \eqref{eq:Phi1} and Corollary~\ref{cor:Kstar}.
%\end{proof}
The statements in Lemma~\ref{lem:Kpicture} follow immediately from
equation~\eqref{eq:Phi1} and Corollary~\ref{cor:Kstar}.

\begin{rem}
The definition of $K_3$ provides a minimum degree of degeneracy for $\phi_v(k)$
at each point $k^* \in K_3$ (with $v = \nabla \gamma(k^*)$), but it does not
specify an upper bound.  Naive dimensional analysis suggests that there may
exist exceptional values of $\omega, \lambda_j$ for which the next higher order
derivative of $\gamma$ also vanishes on $K_3$.  We show in
Lemma~\ref{lm:fourthder} that in fact the opposite is true.  In other words, the
singularities of $\phi_v$ are stable globally within the parameter space
$\{\omega, \lambda_1, \lambda_2 > 0\}$.
\end{rem}

To see the connection between velocities $v \in \R^2$ and possible degeneracies of $\phi_v$, consider the images of sets $K_i$ in the velocity space:
\begin{align}
\notag V_3 &= \{v \in \R^2: \text{ there exists } k \in K_3 \text{ such that } v = \nabla \gamma (k)\},\\
\label{eq:setsV} V_2 &= \{v \in \R^2\setminus V_3: \text{ there exists } k \in K_2 \text{ such that } v = \nabla \gamma (k)\},\\
\notag V_1 &= \{v \in \R^2\setminus (V_2 \cup V_3): \text{ there exists } k \in K_1 \text{ such that } v = \nabla \gamma (k)\},\\
\notag V_0 &= \{v \in \R^2: \text{ for all } k \in [-\pi, \pi]^2,\; v \ne \nabla \gamma (k)\}.
\end{align}
Alternatively, under the mapping $\mathcal{V}:[-\pi, \pi]^2 \to \R^2$ defined by 
\begin{equation}																			\label{eq:nablagammamap}
\mathcal{V}(k) = \nabla \gamma (k),
\end{equation}
sets \eqref{eq:setsV} admit the representation
\begin{align}
\notag              V_3 &= \mathcal{V}(K_3),\\
\label{eq:setsVmap} V_2 &= \mathcal{V}(K_2),\\
\notag              V_1 &= \mathcal{V}(K_1)\setminus (V_3 \cup V_2),\\
\notag              V_0 &= \R^2\setminus \mathcal{V}([-\pi, \pi]^2).
\end{align}
%The collection $\{V_i\}_{i=0}^3$ forms a partition of $\R^2$. More detailed description of its structure is provided in the following result.
\begin{prop} \label{prop:Vpicture}
Fix $\omega, \lambda_1, \lambda_2 > 0.$
Let the sets $\{V_i\}_{i=0}^3$ be defined by \eqref{eq:setsV}. Then they are located as shown on Figure~\ref{Velocities}. There are two simple closed continuous curves $\Psi_1$ and $\Psi_2$ around the origin that split the plane into three open regions. More precisely, $\Psi_1$ encloses a convex region and $\Psi_2$ consists of four concave arcs that meet at cusps.  The four vertices of these cusps form the set $V_3$. The union of $\Psi_1$ and $\Psi_2$, with the four points that belong to $V_3$ removed, is $V_2$.  The union of the two inner bounded open regions  is $V_1$. The unbounded region is $V_0$ which has boundary curve $\Psi_1$. %The shape of the inner curve $\Psi_1$ resembles $y^2 = x^3$ and it has four cusp-like vertices.   
\begin{figure}[ht]
\begin{picture}(200,200)
\put(0,90){\vector(1,0){200}}											%axes
\put(203,90){$x_1$}
\put(100,15){\vector(0,1){155}}
\put(95,173){$x_2$}
%\thinlines
%\qbezier(30,120)(30,120)(130,120)									%square
%\qbezier(30,20)(30,20)(130,20)
%\qbezier(30,120)(30,120)(30,20)
%\qbezier(130,120)(130,120)(130,20)
%\thicklines
\qbezier(70,122)(80,90)(70,58)											%arches
\qbezier(70,122)(100,110)(130,122)
\qbezier(130,122)(120,90)(130,58)
\qbezier(130,58)(100,70)(70,58)
\qbezier(35,90)(33,32)(100,35)											%circle in the middle
\qbezier(165,90)(160,30)(100,35)	
\qbezier(165,90)(170,150)(100,150)	
\qbezier(100,150)(30,150)(35,90)	
\put(130,122){\circle*{4}}                         %bold points
\put(130,58){\circle*{4}}
\put(70,122){\circle*{4}}
\put(70,58){\circle*{4}}
%\qbezier[50](15,90)(15,17)(100,15)											%circle in the middle
%\qbezier[50](185,90)(160,30)(100,15)	
%\qbezier[50](185,90)(190,170)(100,170)	
%\qbezier[50](100,170)(30,150)(15,90)	
\put(10,140){$V_0$}															%labels
\put(177,130){$V_1$}
\put(175,130){\vector(-1,-1){30}}	
\put(175,130){\vector(-2,-1){65}}	
\put(50,70){$V_2$}
\put(48,72){\vector(-1,0){10}}	
\put(62,72){\vector(1,0){10}}
\put(132,58){$V_3$}
\put(132,122){$V_3$}
\put(70,125){$V_3$}
\put(72,48){$V_3$}
\put(49,95){$\Psi_1$}
\put(49,105){$\Psi_2$}
\put(48,97){\vector(-1,0){10}}	
\put(62,107){\vector(1,0){10}}
\end{picture}
\caption{Sets $V_0$, $V_1$, $V_2$, and $V_3$.}
\label{Velocities}
\end{figure}
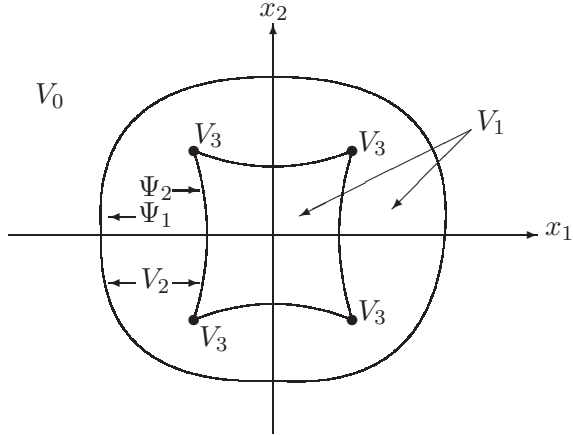 
\end{prop}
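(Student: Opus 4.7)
The plan is to trace the gradient map $\mathcal{V}=\nabla\gamma$ through the partition $K_1\cup K_2\cup K_3$ supplied by Lemma~\ref{lem:Kpicture} and to read off the resulting image structure. The explicit formula $\nabla\gamma=\gamma^{-1}(\lambda_1\sin k_1,\lambda_2\sin k_2)$ shows that $\mathcal{V}$ vanishes exactly at the four points $\{0,\pi\}^2$, all of which map to $0\in\R^2$; direct computation of the Hessians there gives a non-degenerate minimum at $(0,0)$, a non-degenerate maximum at $(\pi,\pi)$, and non-degenerate saddles at $(0,\pi),(\pi,0)$. Combined with Lemma~\ref{lem:Kpicture}, the set $K_1$ decomposes into three open components: the disk $D_0$ enclosed by $C_0\subset K_2$ around the origin, the disk $D_\pi$ enclosed by $C_\pi\subset K_2\cup K_3$ around $(\pi,\pi)$, and the annular region $A$ containing both saddles. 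Because $D^2\gamma$ is positive definite at $(0,0)$ and its determinant cannot change sign inside $D_0\subset K_1$ without first landing in $K_2$, the Hessian is positive definite throughout $D_0$; analogously $D^2\gamma<0$ on $D_\pi$, while on $A$ the determinant is strictly negative.

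Next I would treat the two disks. Strict convexity of $\gamma$ on the simply connected set $D_0$, together with $\det D^2\gamma>0$, lets a standard argument promote the local diffeomorphism $\mathcal{V}|_{D_0}$ to a global diffeomorphism onto an open, bounded, simply connected set containing $0\in\R^2$; the mirror argument with strict concavity shows the same for $\mathcal{V}|_{D_\pi}$. Setting $\Psi_1:=\mathcal{V}(C_0)$ and $\Psi_2\cup V_3:=\mathcal{V}(C_\pi)$, each is the topological boundary of the corresponding open image. The tangent to the image along $C_0$ is $D^2\gamma(k)\dot k$, which cannot vanish because $C_0\cap K_3=\emptyset$ forces $\dot k$ to be transverse to the null eigenvector of $D^2\gamma(k)$; hence $\Psi_1$ is a smooth Jordan curve. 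At each of the four $K_3$ points on $C_\pi$, expanding $\det D^2\gamma$ to leading order shows (via the vanishing of $(\xi\cdot\nabla)^3\gamma$) that the tangent to $C_\pi$ aligns with the null direction $\xi$. This produces an image cusp whose local $(2,3)$-model follows from the nonvanishing of $(\xi\cdot\nabla)^4\gamma$ guaranteed by Lemma~\ref{lm:fourthder}. Therefore $\Psi_2$ consists of four smooth arcs joined at the cusps $V_3=\mathcal{V}(K_3)$.

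Convexity of the region bounded by $\Psi_1$ and concavity of each arc of $\Psi_2$ would be established by showing that the signed curvature of each image curve has constant sign. A short local computation expresses this curvature in terms of the third-order Taylor coefficients of $\gamma$ along the caustic; the constancy and the sign follow from the analysis in Section~\ref{sec:phasefunc}. This is the step I expect to be the main obstacle, since it requires uniform quantitative control of higher-order derivatives of $\gamma$ throughout the parameter space $\{\omega,\lambda_1,\lambda_2>0\}$; everything else is formal.

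Finally I would assemble the global picture. The image $\mathcal{V}([-\pi,\pi]^2)$ is compact, and by Sard's theorem its topological boundary lies in the critical value set $\Psi_1\cup\Psi_2\cup V_3$. Because $\mathcal{V}(D_0)$ is a bounded, open, simply connected set whose boundary is the Jordan curve $\Psi_1$, it must coincide with the bounded component of $\R^2\setminus\Psi_1$; hence $\mathcal{V}([-\pi,\pi]^2)=\overline{\mathcal{V}(D_0)}$ and $V_0=\R^2\setminus\mathcal{V}([-\pi,\pi]^2)$ is the unbounded exterior of $\Psi_1$. Since $\mathcal{V}(D_\pi)$ is open, bounded, and bounded by the closed curve $\Psi_2\cup V_3$, it is the interior of the astroid. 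Finally $V_1=\mathcal{V}(K_1)\setminus(V_2\cup V_3)=\mathcal{V}(D_0)\setminus(\Psi_2\cup V_3)$ is the interior of $\Psi_1$ with the astroid-like curve and its cusps deleted, which splits into exactly the two bounded open regions described in the statement.
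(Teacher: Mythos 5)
Your outline follows the same route as the paper---push the two loops of $\Phi_1$ forward under $\mathcal{V}$, obtain a smooth Jordan curve $\Psi_1$ (no $K_3$ points on that loop) and a four-cusped curve $\Psi_2$ with cusps at $\mathcal{V}(K_3)$---but the two steps you leave open are exactly the substance of the proof. The curvature-sign claim, which you explicitly defer, does not simply ``follow from the analysis in Section~\ref{sec:phasefunc}''; the paper must carry out a specific computation. Parametrizing $\Phi_1$ by the tangent velocity $(-\partial_bF\sin k_2,\ \partial_aF\sin k_1)$, it shows that $\mathcal{V}(k)$ moves in the direction $\xi^\perp$ with a scalar factor proportional to $\tilde G(a,b)/(a^2b)$ (see \eqref{eq:Gtilde}), so the image tangent degenerates exactly where this factor changes sign, i.e.\ at the points of $K_3$ by Corollary~\ref{cor:Kstar}; and the turning direction of $\xi^\perp$ is governed by a determinant equal to $b^{-2}\bigl(\lambda_2a(1-b^2)^2+\lambda_1b(1-a^2)^2\bigr)$, whose sign is that of $ab$: positive on the loop producing $\Psi_1$, negative on the loop producing $\Psi_2$. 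Without this computation (or an equivalent one) there is no proof that $\Psi_1$ bounds a convex region, that the four arcs of $\Psi_2$ are concave, or that each image curve winds exactly once around the origin.

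The global assembly also has a genuine gap. Sard-type reasoning only yields $\partial\,\mathcal{V}([-\pi,\pi]^2)\subseteq\Psi_1\cup\Psi_2\cup V_3$; the inference ``hence $\mathcal{V}([-\pi,\pi]^2)=\overline{\mathcal{V}(D_0)}$'' presupposes that $\Psi_2$, $\mathcal{V}(A)$ and $\mathcal{V}(D_\pi)$ all lie inside $\Psi_1$, which is precisely what has to be proved. The paper gives a separate argument for this: extreme points of $\mathcal{V}([-\pi,\pi]^2)$ in any direction arise only from $|k_1|,|k_2|\le\pi/2$; every point of the loop mapping to $\Psi_2$ other than $(\pm\pi/2,\pm\pi/2)$ has a coordinate beyond $\pi/2$, and at those four points $\xi^\perp$ (which spans the image of $D\mathcal{V}$) is collinear with $\mathcal{V}(k)$, so they are not extreme points either; hence the outer boundary is $\Psi_1$ alone. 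In addition, your ``standard argument'' that strict convexity of $\gamma$ on the simply connected set $D_0$ makes $\mathcal{V}|_{D_0}$ a global diffeomorphism is not valid as stated: injectivity of a gradient map requires convexity of the domain (or a Hadamard-type properness argument), and $D_0$ is not shown to be convex in the $k$ variables. Fortunately the paper's route never needs injectivity of $\mathcal{V}$ on $D_0$: once the boundary of the image is identified as $\Psi_1$, the descriptions of $V_0$, $V_1$, $V_2$, $V_3$ follow directly from \eqref{eq:setsVmap}.
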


We can now state the main result relating a velocity $v$ to the decay order of an oscillatory integral \eqref{eq:intv} with phase function $\gamma$. 

\begin{thm}																													\label{thm:poly}
Fix $\omega, \lambda_1, \lambda_2 > 0$.
Let the sets $\{V_i\}_{i=1}^3$ be defined by \eqref{eq:setsV}, $\eta$ be a smooth periodic function on $[-\pi, \pi]^2$, and integral $I(t, x, \eta)$ defined by \eqref{eq:intx}. Then for any fixed $\delta > 0$, there exist constants $C_0$,$C_1$, $C_2$ and $C_3$ depending on $\eta$ such that 
\begin{align}
\label{eq:poly34} \text{ for all } & x \text{ with } \dist\left(x, t V_3\right) \le t\delta, \text{ we have } |I(t, x, \eta)| \le \frac {C_3}{|t|^{3/4}},\\
\label{eq:poly56} \text{ for all } & x \text{ with } \dist\left(x, t V_3\right) > t\delta \text{ and } \dist\left(x, t V_2\right) \le t\delta, \text{ we have } |I(t, x, \eta)| \le \frac {C_2}{|t|^{5/6}},\\
\label{eq:poly1} \text{ for all } & x \text{ with } \dist\left(x, t (V_3 \cup V_2)\right) > t\delta \text{ and } \dist(x, tV_1) \leq t\delta, \text{ we have } |I(t, x, \eta)| \le \frac {C_1}{|t|}, \\
\label{eq:polyN} \text{given } N &\geq 1, \text{ for all } x \text{ with } \dist(x, t(V_3 \cup V_2 \cup V_1)) > \delta,  \text{ we have } |I(t,x,\eta)| \le \frac{C_0}{|t|^N}.
\end{align} 
%hold for all $|t| \ge 1$.
%The constants $C_i$, $i=0,1,2,3$, depend on $\eta$ (and $C_0$ also depends on $N$) and 
Each of $C_0$, $C_1$, and  $C_2$ depend on $\delta$ (and $C_0$ also depends on $N$). The values of $C_0$ -- $C_2$ are expected to approach infinity as $\delta$ approaches zero, while $C_3$ is independent of $\delta$. 
\end{thm}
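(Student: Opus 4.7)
The strategy is to decompose $I(t,x,\eta)$ via a partition of unity on $[-\pi,\pi]^2$ and apply stationary-phase methods of varying precision according to the classification of critical points provided by Lemma~\ref{lem:Kpicture}. Writing $v = x/t$, the integral becomes $\int e^{it\phi_v(k)}\eta(k)\,dk$ with $\phi_v(k) = k\cdot v - \gamma(k)$, whose critical points are precisely $\mathcal V^{-1}(v)$, and whose degeneracy class at any such critical point is read off from the partition $K_1 \cup K_2 \cup K_3$.

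The rapid-decay estimate \eqref{eq:polyN} is handled first. When $\dist(x, t(V_3\cup V_2\cup V_1)) > \delta$, one has $v \notin \mathcal V([-\pi,\pi]^2)$ with a quantitative gap, so $|\nabla\phi_v| = |v - \nabla\gamma(k)|$ is bounded below uniformly on $[-\pi,\pi]^2$. Integration by parts against $e^{it\phi_v}$ performed $N$ times then produces the claimed $|t|^{-N}$ bound, with constant depending on $N$, on the derivatives of $\eta$, and on the lower bound for $|\nabla\phi_v|$ (hence on $\delta$).

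For the remaining three bounds I would cover the relevant portion of velocity space by finitely many small balls around reference points $v^*$, and on each ball construct a partition of unity $\{\chi_j\}$ on $[-\pi,\pi]^2$ whose elements are supported either (a) in a small neighborhood of a single critical point $k_j^* \in \mathcal V^{-1}(v^*)$, or (b) in a region where $|v - \nabla\gamma(k)|$ is bounded below uniformly for $v$ near $v^*$. On pieces of type (b), integration by parts once again yields faster-than-polynomial decay, absorbed into the stated constant. On pieces of type (a) I apply Varchenko's theorem~\cite{Var} to the Taylor expansion of $\gamma$ at $k_j^*$: if $k_j^* \in K_1$, invertibility of $D^2\gamma(k_j^*)$ gives the $|t|^{-1}$ rate of \eqref{eq:poly1}; if $k_j^* \in K_2$, the rank-one structure combined with a nonvanishing third derivative along the null direction $\xi$ yields (after a smooth change of variables) the $A_2$ normal form $a(k'_1)^2 + b(k'_2)^3 + O(|k'|^4)$, whose Newton diagram is adapted and produces the $|t|^{-5/6}$ rate of \eqref{eq:poly56}; if $k_j^* \in K_3$, the vanishing of the cubic in the $\xi$ direction promotes the singularity to $A_3$, and Lemma~\ref{lm:fourthder} guarantees that the quartic coefficient along $\xi$ is nonzero, giving the normal form $a(k'_1)^2 + b(k'_2)^4 + O(|k'|^5)$ and the $|t|^{-3/4}$ rate of \eqref{eq:poly34}.

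The main obstacle is uniformity in $v$. A direct stationary-phase analysis at the actual critical points of $\phi_v$ returns only $|t|^{-1}$ when $v$ perturbs $V_3$ slightly, because a cusp of $\mathcal V$ unfolds into isolated non-degenerate critical points whose Hessians degenerate only in the limit $v \to V_3$. To recover the uniform $|t|^{-3/4}$ bound I would appeal to the stability theorem of Ikromov--M\"uller~\cite{IkMu} invoked in Corollary~\ref{cr:OmegakCk}, which asserts that the oscillatory integral estimate for a phase with an $A_k$ singularity is preserved under linear perturbations. Since $\phi_v - \phi_{v^*} = (v - v^*)\cdot k$ is precisely such a perturbation, the $A_3$ bound at a reference cusp velocity $v^*$ extends to all nearby $v$; an analogous argument stabilizes the $|t|^{-5/6}$ bound across $V_2$. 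This stability accounts for the asserted $\delta$-independence of $C_3$ and makes it plausible that $C_2$ must depend on $\delta$, since one must stay quantitatively away from the cusp endpoints of $\Psi_2$ in order to avoid the worse $A_3$ rate.
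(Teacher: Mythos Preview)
Your proposal is correct and follows essentially the same route as the paper: partition of unity on the torus, Varchenko's height computation on each piece, Ikromov--M\"uller stability to handle linear perturbations of the phase, and nonstationary phase away from critical points.

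Two small points are worth tightening. First, your claim that ``Lemma~\ref{lm:fourthder} guarantees that the quartic coefficient along $\xi$ is nonzero'' is not quite what that lemma says: it establishes that the discriminant $\partial_\xi^4\gamma\,\partial_{\xi^\perp}^2\gamma - 3(\partial_\xi^2\partial_{\xi^\perp}\gamma)^2$ is nonzero. This is precisely the condition that, after completing the square to absorb the $y_1y_2^2$ cross term, the resulting $y_2^4$ coefficient does not vanish (equivalently, in the paper's language, that the face polynomial $P$ has no repeated root so Varchenko's Proposition~\ref{prop:Var08} applies). The raw directional derivative $\partial_\xi^4\gamma(k^*)$ itself is not asserted to be nonzero. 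Second, your organization differs slightly from the paper's: you cover velocity space by balls around reference velocities $v^*$ and then build a $k$-space partition for each, whereas the paper partitions $k$-space directly by the neighborhoods $\Omega_{k^*}$ furnished by Corollary~\ref{cr:OmegakCk}. The paper's route is marginally cleaner because the Ikromov--M\"uller estimate at $k^*$ is already uniform in $v$, so no velocity-space cover is needed; velocities enter only when one refines the cover (shrinking each $\Omega_{k^*}$ to have $\mathcal V$-image of diameter $<\delta/2$) in order to decide, for a given $x$, which pieces carry critical points and which may be dispatched by nonstationary phase.
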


If one is only concerned with the worst possible decay among all $x \in \Z^2$, the simpler statement is as follows.
\begin{cor}								\label{thm:unif}
Fix $\omega, \lambda_1, \lambda_2 > 0$. Let the integral $I(t, x, \eta)$ be defined by \eqref{eq:intx}. Then there exists $C = C(\eta) > 0$ such that for all $x \in \Z^2$,
\begin{equation} \label{eq:global34}
\displaystyle |I(t, x, \eta)| \le \frac {C}{|t|^{3/4}}.
\end{equation}
%holds for all $|t| \ge 1$.
\end{cor}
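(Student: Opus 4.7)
The plan is to extract the uniform bound from Theorem~\ref{thm:poly} by fixing a single value of $\delta$ once and for all and noting that among the four decay rates $|t|^{-3/4}$, $|t|^{-5/6}$, $|t|^{-1}$, and $|t|^{-N}$, the slowest is $|t|^{-3/4}$. Because we only need a single constant that works for \emph{all} $x\in\Z^2$, there is no need to optimize $\delta$, so the parameter-dependence of $C_0,C_1,C_2$ is harmless.

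First I would dispose of small times. For $|t|\le 1$ the trivial bound
\[
|I(t,x,\eta)| \le \frac{1}{(2\pi)^2}\int_{[-\pi,\pi]^2}|\eta(k)|\,dk =: M
\]
holds uniformly in $x$, and since $|t|^{-3/4}\ge 1$ on this range we automatically have $|I(t,x,\eta)|\le M\,|t|^{-3/4}$. So it suffices to establish the bound for $|t|\ge 1$.

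Next I would fix $\delta=1$ (any positive value would do) and verify that for $|t|\ge 1$ every $x\in\Z^2$ falls into at least one of the four cases of Theorem~\ref{thm:poly}. Indeed, if $x$ is not in the last case then $\dist(x,t(V_3\cup V_2\cup V_1))\le \delta\le t\delta$, so $x$ is within distance $t\delta$ of $tV_i$ for some $i\in\{1,2,3\}$, placing it in one of the first three cases. Applying the theorem and using the elementary inequalities $|t|^{-5/6},|t|^{-1}\le |t|^{-3/4}$ for $|t|\ge 1$, together with $|t|^{-N}\le |t|^{-3/4}$ whenever $N\ge 3/4$, each of the four estimates~\eqref{eq:poly34}--\eqref{eq:polyN} implies
\[
|I(t,x,\eta)| \le \frac{\max\{C_0,C_1,C_2,C_3\}}{|t|^{3/4}}.
\]
Setting $C := \max\{M,C_0,C_1,C_2,C_3\}$ then yields~\eqref{eq:global34} on all of $\R\times\Z^2$.

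There is no substantive obstacle here: the hard analytic work, namely the classification of the caustic sets $V_1,V_2,V_3$ and the sharp rates attached to each, has already been absorbed into Theorem~\ref{thm:poly}. The only thing to keep an eye on is the cosmetic mismatch between the $t\delta$ thresholds in~\eqref{eq:poly34}--\eqref{eq:poly1} and the $\delta$ threshold in~\eqref{eq:polyN}; restricting to $|t|\ge 1$ (with small $|t|$ handled by the crude $L^1$ bound) makes this mismatch vanish.
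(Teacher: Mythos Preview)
Your argument is correct. Fixing $\delta=1$, handling $|t|\le 1$ by the trivial $L^1$ bound, and then verifying that the four cases of Theorem~\ref{thm:poly} exhaust $\Z^2$ for $|t|\ge 1$ is a clean black-box deduction, and the inequality chain $|t|^{-N}\le |t|^{-1}\le |t|^{-5/6}\le |t|^{-3/4}$ for $|t|\ge 1$ does the rest.

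The paper takes a slightly more direct route. In the course of proving~\eqref{eq:poly34} it constructs a finite partition of unity $\{\omega_j\}$ subordinate to the neighborhoods of Corollary~\ref{cr:OmegakCk} and applies the local bounds~\eqref{eq:local34}--\eqref{eq:local1} term by term; the resulting inequality~\eqref{eq:34proof} already holds for \emph{every} $x$, with no reference to the sets $V_i$ at all. The corollary is thus a byproduct of the partition-of-unity step rather than a consequence of the four-case split. Your approach trades that directness for modularity: you never open up the proof of the theorem, at the cost of a short case analysis and the small-$|t|$ patch. Both are perfectly sound; the paper's version avoids having to reconcile the $\delta$ versus $t\delta$ thresholds that you (correctly) flagged.
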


\begin{cor} \label{cor:poly}
As a special case of Theorem~\ref{thm:poly}, the propagators of the discrete Klein-Gordon equation~\eqref{eq:discKleinGordon}
are recovered by choosing $\eta = \gamma^m$, $m = -1, 0, 1$.  Hence the values of $u(x,t)$
and $u_t(x,t)$ for a solution of~\eqref{eq:discKleinGordon} both satisfy~\eqref{eq:poly34}--\eqref{eq:global34}, 
provided the initial data $g$, $h$ are supported at the origin.  The result extends to
all $g$, $h$ supported in $B(0,R)$ by superposition, once enough time has elapsed
that  $\delta t > 2R$.
\end{cor}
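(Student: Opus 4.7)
The plan is to express the two propagators of~\eqref{eq:discKleinGordon} as finite linear combinations of the oscillatory integrals $I(\pm t, x, \gamma^m)$ with $m \in \{-1, 0, 1\}$, and then read off the desired bounds from Theorem~\ref{thm:poly}. I would first treat initial data concentrated at the origin, say $g(y) = \alpha\delta_{y,0}$ and $h(y) = \beta\delta_{y,0}$. The formulas stated just after~\eqref{eq:discKleinGordon} give
\[
u(x,t) = \frac{1}{(2\pi)^2}\int_{[-\pi, \pi]^2} e^{ik\cdot x}\Bigl[\alpha\cos(t\gamma(k)) + \beta\,\frac{\sin(t\gamma(k))}{\gamma(k)}\Bigr]\,dk,
\]
and an analogous expression for $u_t(x,t)$ with $\gamma(k)\sin(t\gamma(k))$ in place of $\sin(t\gamma(k))/\gamma(k)$. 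The hypothesis $\omega > 0$ forces $\gamma(k) \geq \omega > 0$ on the whole torus, so each of the functions $\gamma^{-1},\,1,\,\gamma$ is $C^\infty$ and $2\pi$-periodic and hence a legitimate choice of $\eta$ in~\eqref{eq:intx}. Expanding $\cos(t\gamma)$ and $\sin(t\gamma)$ into complex exponentials rewrites both integrals as sums of the form $\tfrac{1}{2}\bigl[I(t, x, \gamma^m) \pm I(-t, x, \gamma^m)\bigr]$.

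Theorem~\ref{thm:poly} now applies to each summand. Because $\gamma$ is even in $k$, the map $\mathcal{V} = \nabla\gamma$ is odd, each $K_j$ is symmetric under $k \mapsto -k$, and~\eqref{eq:setsVmap} yields $V_j = -V_j$; consequently $\dist(x, tV_j) = \dist(x, -tV_j)$ and the bounds~\eqref{eq:poly34}--\eqref{eq:polyN} apply identically to $I(t, \cdot, \gamma^m)$ and to $I(-t, \cdot, \gamma^m)$. Summing the bounded number of terms yields the corollary for $\delta_0$-supported data. For general $g, h$ supported in $B(0, R)$, translation invariance of~\eqref{eq:discKleinGordon} gives
\[
u(x,t) = \sum_{y \in B(0, R) \cap \Z^2}\bigl[g(y)\,K_c(x - y, t) + h(y)\,K_s(x - y, t)\bigr],
\]
where $K_c$ and $K_s$ denote the two delta-data kernels just controlled. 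If $\dist(x, tV_3) \leq t\delta$, then the triangle inequality gives $\dist(x - y, tV_3) \leq t\delta + R \leq \tfrac{3}{2}\,t\delta$ whenever $\delta t > 2R$; applying~\eqref{eq:poly34} to each of the $O(R^2)$ summands with enlarged parameter $3\delta/2$ and absorbing the loss into the constant yields the $|t|^{-3/4}$ bound for $u(x,t)$. The $V_2$ and $V_1$ regimes are handled identically (the threshold $\delta$ enlarges by a bounded factor), and the exterior regime~\eqref{eq:polyN} by slightly shrinking $\delta$ so that the post-translation distance still exceeds a positive constant.

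The main obstacle is essentially bookkeeping: tracking how the four regime thresholds transform under translation by a point of $B(0, R)$ and how the constants $C_0, \ldots, C_3$ from Theorem~\ref{thm:poly} propagate through the $\delta$-rescaling and the $O(R^2)$-fold summation. The substantive analytic content is already encapsulated in Theorem~\ref{thm:poly}, so no further critical-point or stationary-phase analysis is required for the corollary itself.
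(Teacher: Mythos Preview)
Your proposal is correct and follows essentially the approach the paper itself implies: the corollary is stated in the paper without a separate proof, the justification being contained in the statement (choose $\eta=\gamma^m$, use superposition for data in $B(0,R)$ once $\delta t>2R$). Your write-up simply fleshes out those two steps---the Fourier representation of the propagators as combinations of $I(\pm t,x,\gamma^m)$ and the triangle-inequality bookkeeping under translation by $y\in B(0,R)$---and does so accurately; the observation that $V_j=-V_j$ (equivalently, that $I(-t,x,\gamma^m)=\overline{I(t,x,\gamma^m)}$ since $\gamma^m$ is real and even) is the right way to reconcile the $\pm t$ terms with the region descriptions in Theorem~\ref{thm:poly}.
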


The values of the exponents in~\eqref{eq:poly34}--\eqref{eq:poly1} are dictated by the worst degeneracy degree of critical points of the phase function. For example, if a velocity belongs to the set $V_1$ and is relatively far from $V_2$ and $V_3$, all the critical points of $\phi_v$ will be uniformly non-degenerate. In this case the decay rate of an oscillatory integral is $|t|^{-d/2}$ in arbitrary dimension $d$ (hence it is $|t|^{-1}$ in dimension two). 

Let us now briefly describe how the rates produced by velocities that are near $V_2$ and $V_3$ are computed (see Section \ref{sec:polyproof} for details). 
%It is known that the order of the degeneracy of a critical point of a function has a geometrical interpretation as a distance from the origin to the Newton polyhedron of the Taylor series of the function in a neighborhood of that critical point. More precisely, assume that the phase $\phi_v$ admits a Taylor expansion near its critical point,
If $v \in V_2 \cup V_3$, then $\phi_v$ has at least one  degenerate critical point
 $k^* \in [-\pi,\pi]^2$.  Then the Taylor series expansion of $\phi_v$ near its critical point takes the form

\begin{equation}			\label{eq:phasephiv}
\phi_v(k) = k \cdot v - \gamma(k) = c_0 + \sum_{\substack{n,m\ge 0 \\ n+m \geq 2}} c_{n,m}(k_1-k^*_1)^n (k_2-k^*_2)^m.
\end{equation}
%Define the support of the series as 
%\begin{equation}
%\supp (T_{\phi_v}) = \{(n, m) \in \Z_+^2: c_{n, m} \ne 0\}.
%\end{equation}
%The Newton polyhedron of the series is then defined as the convex hull of the set
%\begin{equation}
%\bigcup_{(n, m) \in \supp (T_{\phi_v})} \left[(n,m) + \Z^2_+\right],
%\end{equation}
%where $\Z^2_+ = \{x \in \Z^2: x_1, x_2 \ge 0\}$.

This Taylor series is said to be supported on the set of indices
$(m,n)$ where $c_{m,n} \not=0$.  Roughly speaking, one computes the
leading-order decay of the corresponding oscillatory integral by measuring
the distance from the origin to the convex hull of the Taylor series support,
then taking the reciprocal.  However the support is not invariant under
changes of coordinates, so one must first choose an ``adapted" coordinate
system that maximizes this distance~\cite{Var}.  
%An analytic system of coordinates in which the distance from the origin to the Newton polyhedron is the largest is called the adapted system. The decay rate of the corresponding oscillatory integral is the reciprocal of this distance~\cite{Var}.
It turns out that for each function $\phi_v(k)$ the linear coordinate system that diagonalizes the Hessian matrix is adapted (Lemma~\ref{lm:fourthder} verifies this property in the one case where it is not readily apparent).
According to the definitions \eqref{eq:setK} and \eqref{eq:setsV}, the Taylor series of $\phi_v$ with respect to these coordinates has more vanishing low-order terms  if $v \in V_3$  as compared to $v \in V_2$.  Therefore its Newton polyhedron lies further away from the origin, and the oscillatory integral decays more slowly.  The Newton polyhedra associated with $v \in V_2$ and $v \in V_3$ are sketched in Figures~\ref{pic:NP2} and~\ref{pic:NP3} and give rise to the exponents in~\eqref{eq:poly56} and~\eqref{eq:poly34} respectively.

If $0 < \dist(v, V_2 \cup V_3) < \delta$, then $\phi_v$ does not have a
degenerate critical point itself, but it is related to the degenerate phase
functions described above by a small linear perturbation.  The fact that
oscillatory integral estimates are stable under such perturbations is
proved in~\cite{IkMu}.

According to its definition, $V_0 \subset \R^2$ consists of velocities that produce phase functions with no critical points in the domain of integration.  As a result one can recover polynomial decay (in $x$) of $I(t,x,\eta)$ of any order by repeated integrations by parts.  In fact for solutions of the discrete Klein-Gordon equation the decay satisfies a number of exponential bounds.
\begin{thm}																											\label{thm:exp1}
For every $\mu > 0$ there exists constants 
$0 < v_\mu \leq \frac{1}{\mu}(1 + 2\sqrt{\lambda_1 + \lambda_2}\,\sinh(\mu/2))$ and
$C_\mu < \omega + 2\sqrt{\lambda_1 + \lambda_2}\cosh(\mu/2)$ such that
\begin{align}
\label{eq:LiebRobinsonCos}
\Big|\frac{1}{(2\pi)^2}\int_{[-\pi,\pi]^2} 
\cos(t\,\gamma(k))e^{ik\cdot x}\,dk\Big| &\leq e^{-\mu(|x|-v_\mu|t|)}\\
\Big|\frac{1}{(2\pi)^2}\int_{[-\pi,\pi]^2} \frac{\sin(t\,\gamma(k))}{\gamma(k)}
e^{ik\cdot x}\,dk\Big| &\leq e^{-\mu(|x|-v_\mu|t|)} \label{eq:LiebRobinsonSin}\\
\Big|\frac{1}{(2\pi)^2}\int_{[-\pi,\pi]^2} 
\gamma(k) \sin(t\,\gamma(k))e^{ik\cdot x}\,dk\Big| 
&\leq C_\mu e^{-\mu(|x|-v_\mu|t|)}  \label{eq:LiebRobinsonQtoP}
\end{align}
\end{thm}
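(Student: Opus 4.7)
The plan is to prove all three bounds simultaneously via a single contour deformation in complexified momentum. The integrands $\cos(t\gamma(k))$, $\sin(t\gamma(k))/\gamma(k)$, and $\gamma(k)\sin(t\gamma(k))$ are each even in $\gamma$, hence entire functions of $\gamma^2(k) = \omega^2 + 2\sum_j \lambda_j(1-\cos k_j)$, which is itself a trigonometric polynomial in $k$ and thus entire. Combined with $2\pi$-periodicity in each $k_j$ and the assumption $x\in\Z^2$, this justifies replacing the torus $[-\pi,\pi]^2$ by the shifted contour $[-\pi,\pi]^2 + i\zeta$ for any real $\zeta$. I would choose $\zeta = \mu(\sign(x_1),\sign(x_2))$, so that $|e^{ik\cdot x}| = e^{-\zeta\cdot x} = e^{-\mu\|x\|_1}\le e^{-\mu|x|}$ on the shifted contour; this handles the spatial decay and reduces each bound to a uniform estimate on the remainder of the integrand.

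The technical heart is a sharp bound on $|\mathrm{Im}\,\gamma(p+i\zeta)|$ when $|\zeta_j|=\mu$. I would use the factorization $\gamma^2 = \omega^2 + z_1^2 + z_2^2$ with $z_j := 2\sqrt{\lambda_j}\sin(k_j/2)$, writing $z_j = u_j + iv_j$ for
\[
u_j = 2\sqrt{\lambda_j}\sin(p_j/2)\cosh(\zeta_j/2), \qquad v_j = 2\sqrt{\lambda_j}\cos(p_j/2)\sinh(\zeta_j/2).
\]
Then $\gamma^2 = (\omega^2+|u|^2-|v|^2) + 2i\,u\cdot v$, and the Cauchy--Schwarz chain $(u\cdot v)^2 \le |u|^2|v|^2 \le (\omega^2+|u|^2)|v|^2$ yields
\[
|\gamma|^4 = (\omega^2+|u|^2-|v|^2)^2 + 4(u\cdot v)^2 \le (\omega^2+|u|^2+|v|^2)^2.
\]
Subtracting $\mathrm{Re}(\gamma^2)$ from both sides of $|\gamma|^2\le \omega^2+|u|^2+|v|^2$ produces the key bound
\[
2|\mathrm{Im}\,\gamma|^2 = |\gamma|^2 - \mathrm{Re}(\gamma^2) \le 2|v|^2 \le 8(\lambda_1+\lambda_2)\sinh^2(\mu/2),
\]
while the same chain also produces $|\gamma|^2 \le \omega^2 + 4(\lambda_1+\lambda_2)\cosh^2(\mu/2)$, hence $|\gamma| < \omega + 2\sqrt{\lambda_1+\lambda_2}\cosh(\mu/2) = C_\mu$ (strictly, since $\omega > 0$).

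Assembly is then routine. The elementary inequalities $|\cos(a+ib)|,|\sin(a+ib)|\le\cosh|b|\le e^{|b|}$ immediately give
\[
|\cos(t\gamma(k))| \le \exp\bigl(|t|\cdot 2\sqrt{\lambda_1+\lambda_2}\sinh(\mu/2)\bigr)
\]
on the shifted contour, which combined with the $e^{-\mu|x|}$ factor proves~\eqref{eq:LiebRobinsonCos}. For~\eqref{eq:LiebRobinsonSin} I would use the representation $\sin(t\gamma)/\gamma = \int_0^t\cos(s\gamma)\,ds$ to bound the integrand by $|t|\exp(|t|\cdot 2\sqrt{\lambda_1+\lambda_2}\sinh(\mu/2))$, then absorb the $|t|$ via $|t|\le e^{|t|}$; this is precisely the source of the extra ``$+1$'' in the stated value of $v_\mu$. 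For~\eqref{eq:LiebRobinsonQtoP} the bound $|\gamma|\le C_\mu$ comes out as the overall multiplicative constant in front of the same exponential factor.

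The main obstacle I expect is the inequality $|\mathrm{Im}\,\gamma|^2\le |v|^2$ itself. A crude alternative such as $|\mathrm{Im}\,\gamma|^2 \le (\mathrm{Im}\,\gamma^2)^2/(4\,\mathrm{Re}\,\gamma^2)$ is not uniformly valid -- it degenerates when $\mathrm{Re}(\gamma^2)$ becomes small -- while a pointwise estimate based only on $|\gamma^2|$ inflates $v_\mu$ by a factor of $\cosh(\mu/2)/\sinh(\mu/2)$ that is unbounded as $\mu\to 0$. The specific algebraic identity above, which couples the factorization $\gamma^2 = \omega^2 + z_1^2 + z_2^2$ to the positivity of the mass $\omega^2$, is what produces the precise half-angle $\sinh(\mu/2)$ appearing in the theorem.
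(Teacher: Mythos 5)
Your proposal is correct and takes essentially the same route as the paper: shift the integration contour into the complex domain by an imaginary vector of size $\mu$, pick up the factor $e^{-\mu|x|}$ from $e^{ik\cdot x}$, and control the integrand by a uniform bound on $\left|\mathrm{Im}\,\gamma\right|$ (the paper cites the inequality $\left|\mathrm{Im}\sqrt{z^2+w^2}\right|\le\sqrt{(\mathrm{Im}\,z)^2+(\mathrm{Im}\,w)^2}$, which you prove explicitly via the factorization $\gamma^2=\omega^2+z_1^2+z_2^2$). The only cosmetic differences are the normalization of the contour shift (you use the $\ell^\infty$ unit vector $\mu(\sign x_1,\sign x_2)$, the paper uses the Euclidean unit $\mu x/|x|$, both of which give $|\tilde k_j|\le\mu$ coordinatewise and hence the same $\sinh(\mu/2)$ bound) and your use of $\sin(t\gamma)/\gamma=\int_0^t\cos(s\gamma)\,ds$ where the paper uses the equivalent bound $|\sin z/z|\le e^{|\mathrm{Im}\,z|}$.
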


%\begin{thm}																										\label{thm:exp}
%Let $V_0$ be defined in \eqref{eq:setsV} and the integral $I(t, x, \eta)$ by \eqref{eq:intx}. For any $\mu > 0$ there exist constants $C_\mu(\eta)$ and $v_\mu$ such that
%\begin{equation}
%\displaystyle |I(t, x, \eta)| \le C_\mu(\eta) e^{-\mu(x - v_\mu |t|)}.
%\end{equation}
%
%\end{thm}

The upper bound for $v_\mu$ as stated in Theorem~\ref{thm:exp1} behaves as
expected for large $\mu$~(see Corollary~2.2 in \cite{nachtergaele2009})
but it has some evident
drawbacks over the rest of the range.  First, the sharp value of $v_\mu$ must
be an increasing function of $\mu$ so the apparent asymptote as $\mu\to 0$
is an artifact of the calculation.  In addition the estimates~\eqref{eq:LiebRobinsonCos}
and~\eqref{eq:LiebRobinsonSin} don't show any time-decay when applied to a point
$x \in tV_0$ with $\frac{|x|}{t} \leq v_\mu$.  The last result shows that in fact every
$x \in tV_0$ is subject to an effective exponential bound.

\begin{thm}																											\label{thm:exp2}
Let $V_0$ be the set defined in \eqref{eq:setsV}.  For any $x \in \R^2$ with
$\frac{x}{t} \in V_0$ there exists $\mu > 0$ and constants $C_1 < \infty$,
$C_2 \leq \sqrt{\omega^2 + 4(\lambda_1 + \lambda_2)}$ such that
\begin{align}
\label{eq:ExponentialCos}
\Big|\frac{1}{(2\pi)^2}\int_{[-\pi,\pi]^2} 
\cos(t\,\gamma(k))e^{ik\cdot x}\,dk\Big| &\leq e^{-\mu\, \dist(x,\, tV_1)}\\
\Big|\frac{1}{(2\pi)^2}\int_{[-\pi,\pi]^2} \frac{\sin(t\,\gamma(k))}{\gamma(k)}
e^{ik\cdot x}\,dk\Big| &\leq C_1 e^{-\mu\, \dist(x,\, tV_1)} \label{eq:ExponentialSin} \\
\Big|\frac{1}{(2\pi)^2}\int_{[-\pi,\pi]^2} 
\gamma(k) \sin(t\,\gamma(k))e^{ik\cdot x}\,dk\Big| 
&\leq C_2 e^{-\mu\, \dist(x,\, tV_1)}  \label{eq:ExponentialQtoP}
\end{align}
\end{thm}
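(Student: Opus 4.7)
The plan is to deform the contour of integration from the real torus $[-\pi,\pi]^2$ to its complex translate $[-\pi,\pi]^2 + is\hat\zeta$ for an appropriate unit vector $\hat\zeta \in \R^2$ and small $s > 0$, so that the shift produces an exponential gain $e^{-s\hat\zeta\cdot x}$ which dominates the growth contributed by $e^{|t|\,|\mathrm{Im}\,\gamma|}$. For definiteness assume $t > 0$. Given $v := x/t \in V_0$, Proposition~\ref{prop:Vpicture} identifies $\overline{V_1} = \mathcal{V}([-\pi,\pi]^2)$ as the compact convex region bounded by $\Psi_1$, and $v$ lies in its complement. Let $v^* \in \overline{V_1}$ be the nearest point to $v$ and set $\hat\zeta := (v - v^*)/|v - v^*|$. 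The supporting hyperplane at $v^*$ gives $\hat\zeta \cdot \nabla\gamma(k) \leq \hat\zeta \cdot v^*$ for every $k$, and since $\gamma(-k) = \gamma(k)$ the set $\overline{V_1}$ is centrally symmetric, so $-\hat\zeta \cdot \nabla\gamma(k) \leq \hat\zeta \cdot v^*$ as well. Consequently $\hat\zeta \cdot (x \pm t\nabla\gamma(k)) \geq \hat\zeta \cdot (x - tv^*) = \dist(x, tV_1)$.

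Next, $\gamma^2(k) = \omega^2 + \sum_j 2\lambda_j(1-\cos k_j)$ is entire in $k \in \bC^2$ and satisfies $\gamma^2 \geq \omega^2 > 0$ on the real torus, so there exists $s_0 > 0$ such that $\gamma^2(k + i\zeta)$ remains in the open right half-plane for all $k \in \R^2$ and $|\zeta| \leq s_0$; the principal square root yields an analytic extension $\gamma(k + i\zeta)$ that agrees with $\gamma(k)$ on the real line. By Cauchy's theorem combined with $2\pi$-periodicity in $\mathrm{Re}\,k_j$ (which cancels the lateral edge contributions), for each $s \in (0, s_0]$ each of the three integrals in \eqref{eq:ExponentialCos}--\eqref{eq:ExponentialQtoP} is unchanged when the domain of integration is translated to $[-\pi,\pi]^2 + is\hat\zeta$.

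Split each trigonometric factor as $\tfrac{1}{2}(e^{it\gamma}\pm e^{-it\gamma})$ (or with $\tfrac{1}{2i}$ in the sine case) and estimate each summand separately. Writing $\gamma(k + is\hat\zeta) = R(k,s) + iI(k,s)$, each summand has modulus $\exp(-s\hat\zeta \cdot x \mp tI(k,s))$. A Taylor expansion in $s$ gives $I(k,s) = s\hat\zeta \cdot \nabla\gamma(k) + O(s^3)$ uniformly in $k$, so combining with the separation inequalities of the first paragraph yields
\begin{equation*}
-s\hat\zeta \cdot x \mp tI(k,s) \leq -s\,\dist(x, tV_1) + C_\gamma\, t s^3.
\end{equation*}
Because $\dist(x, tV_1) = t\,\dist(v, V_1)$, choosing $s := \min\{s_0,\, c\sqrt{\dist(v, V_1)}\}$ for $c$ sufficiently small (depending only on $C_\gamma$) absorbs the remainder and produces exponent $\leq -(s/2)\,\dist(x, tV_1)$; this gives the exponential decay with $\mu := s/2$.

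The pre-exponential factor $|F_0(k + is\hat\zeta)|$ with $F_0 \in \{1,\, 1/\gamma,\, \gamma\}$ is handled directly: it equals $1$ for $\cos$; for $\sin/\gamma$ the quotient $1/|\gamma|$ stays bounded because $|\gamma^2|$ is bounded below in the shifted strip, yielding finite $C_1$; and $|\gamma(k + is\hat\zeta)|^2 = |\gamma^2(k + is\hat\zeta)| \leq \omega^2 + 4(\lambda_1 + \lambda_2) + O(s^2)$, so $C_2 \leq \sqrt{\omega^2 + 4(\lambda_1 + \lambda_2)}$ up to a $(1 + O(s^2))$ correction that can be traded for a slight decrease in $\mu$. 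The main technical obstacle is verifying the uniform $O(s^3)$ estimate on $I(k,s) - s\hat\zeta \cdot \nabla\gamma(k)$; this reduces to a routine bound on the third $\zeta$-derivative of the analytic square root $\gamma$ in a complex neighborhood of the real torus.
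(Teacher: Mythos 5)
Your proof is correct and follows essentially the same approach as the paper: a contour shift into the complex domain, the identification of $\overline{V_1}=\mathcal{V}([-\pi,\pi]^2)$ as a centrally symmetric convex compact set so that a supporting-hyperplane argument yields the separation inequality $\hat\zeta\cdot(x\pm t\nabla\gamma(k))\geq\dist(x,tV_1)$, and a first-order Taylor expansion of $\mathrm{Im}\,\gamma$ in the shift parameter whose error is absorbed by taking the shift small relative to $\dist(x/t,V_1)$. The only cosmetic differences are that you fix the shift direction $\hat\zeta$ geometrically at the outset rather than optimizing over $\tilde\mu$ at the end, and you note the sharper $O(s^3)$ remainder (the $s^2$ term of $\gamma(k+is\hat\zeta)$ being real), where the paper uses the cruder $o(|\tilde\mu|)$ bound; neither change affects the conclusion.
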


\subsection{Bounds in $\ell^p(\Z^2)$ and well-posedness for discrete NLKG}
If one combines Corollary~\ref{thm:unif} and Theorem~\ref{thm:exp1}
with $\mu = 1$, the end result is that each propagator of the discrete
Klein-Gordon equation~\eqref{eq:discKleinGordon} is bounded globally by $|t|^{-3/4}$
and decays exponentially outside of a region whose diameter is proportional to $|t|$.
%This leads immediately to the bounds
%\begin{equation*} 
%\norm{u(\,\cdot\, , t)}_p\leq C|t|^{\frac{2}{p}-\frac{3}{4}} (\norm{g}_1 + \norm{h}_1)
%\quad \text{for\ }p \geq 1.
%\end{equation*}
In addition to these pointwise bounds, the $\ell^2$ norm
of solutions is bounded for all time because of the embedding
$\ell^1(\Z^2) \subset \ell^2(\Z^2)$ and the uniform boundedness of 
functions $\cos(t\gamma(k))$ and $\frac{\sin(t\gamma(k))}{\gamma(k)}$
with respect to $t$ and $k$.

Interpolating between the best values of these estimation methods,
one obtains the bounds
\begin{equation} \label{eq:ell_p} 
\norm{u(\,\cdot\, , t)}_p\leq C|t|^{\frac{3}{2p}-\frac{3}{4}} (\norm{g}_1 + \norm{h}_1)
\quad \text{for\ }p \geq 2.
\end{equation}

There is a well-established route from here to proving global estimates for
the associated nonlinear discrete Klein-Gordon equation with power-law nonlinearity,
\begin{equation} \tag{\ref{eq:discNLKG}}%\label{eq:discNLKG}
\left\{
\begin{aligned}
&u_{tt}(x,t) - {\textstyle \sum_{j=1}^2}\lambda_j 
\big(u(x+e_j,t) + u(x-e_j,t) - 2u(x,t)\big)
+ \omega^2 u(x,t) = |u|^{\beta - 1}u \\
&u(x,0) = g(x) \\
&u_t(x,0) = h(x)
\end{aligned} \right.
\end{equation}
the initial data $g, h \in \ell^1(\Z^2)$ are small and $\beta$ is large enough
so that $\norm{u^\beta(\,\cdot\, , t)}_1$ is integrable in time for some $p \leq \beta$.
Based on the time-decay available in~\eqref{eq:ell_p}, one can prove the following.

\begin{prop} \label{prop:discNLKG}
Given $\beta > \frac{10}{3}$, there exists $\eps > 0$ and $C < \infty$ 
such that for all initial data with $\norm{g}_1 + \norm{h}_1 < \eps$,
the equation~\eqref{eq:discNLKG} has a unique solution $u(x,t)$ satisfying
\begin{equation*}
\norm{u(\,\cdot\,,t)}_2 \leq C(\norm{g}_1 + \norm{h}_1)
\quad \text{and} \quad
\norm{u(\,\cdot\,,t)}_\infty \leq C|t|^{-\frac34}(\norm{g}_1 + \norm{h}_1)
\end{equation*}
\end{prop}

In brief, one treats the nonlinearity as an inhomogeneous term and shows that
iteration of Duhamel's formula
\begin{equation} \label{eq:Duhamel}
u(\,\cdot\,,t) = \cos(t\sqrt{H})g + \frac{\sin(t\sqrt{H})}{\sqrt{H}}h 
+ \int_0^t \frac{\sin((t-s)\sqrt{H})}{\sqrt{H}} \big(|u|^{\beta-1}u(\,\cdot\,, s)\big)\,ds
\end{equation}
yields a unique fixed point in the space of functions for which
$u \in L^\infty_t\ell^2_x$ and $(1+|t|)^{3/4}u(t,x)$ is bounded.
The condition $\beta >\frac{10}{3}$ is needed here to insure that the norm bound
\begin{equation*}
\|u^\beta (\,\cdot\,, s)\|_1 \leq \norm{u(\,\cdot\,,t)}_2^2
\norm{u(\,\cdot\,,s)}_\infty^{\beta-2} \lesssim (1+|s|)^{(6-3\beta)/4}
\end{equation*}
is integrable with respect to $s$.

For the first two terms of~\eqref{eq:Duhamel},
which do not depend on $u$, Corollary~\ref{thm:unif}
provides a bound in $\ell^\infty(\Z^2)$ with time decay $|t|^{-3/4}$.
The uniform $\ell^2$-bound in for these terms
holds by embedding $\ell^1(\Z^2) \subset
\ell^2(\Z^2)$, and observing that the Fourier multipliers
$\cos(t \gamma(k))$ and $\gamma^{-1}(k)\sin(t\gamma(k))$ are smaller
than $1$ and $\omega^{-1}$ respectively for all $t > 0$.

For the nonlinear term, one has the two operator estimates:
\begin{equation*}
\Big\Vert\frac{\sin(t-s)\sqrt{H}}{\sqrt{H}}f\Big\Vert_\infty 
\leq C|t-s|^{-\frac34}\norm{f}_1 
\quad \text{and} \quad
\Big\Vert\frac{\sin(t-s)\sqrt{H}}{\sqrt{H}}f\Big\Vert_2
\leq \omega^{-1} \norm{f}_1.
\end{equation*}
Then the main integral inequality needed is
\begin{equation*}
\int_0^t |t-s|^{-\frac34}(1+|s|)^{(6-3\beta)/4}\,ds \leq C(1+|t|)^{-\frac34}
\quad \text{for $\beta > \frac{10}{3}$.}
\end{equation*}
Since $\beta > 1$, the mapping $u \to |u|^{\beta-1}u$ has small Lipschitz constant
if $u$ is sufficiently small.  If the initial values $f$ and $g$ are similarly small,
the rest of the argument to establish a contractive mapping 
in~\eqref{eq:Duhamel} is standard.

\begin{rem}
The exponent in~\eqref{eq:ell_p} is probably not optimal.  It is conjectured
in~\cite{mielkepatz} that the decay rate should be $|t|^{-\alpha_p}$,
where $\alpha_p = \min(\frac{p-2}{p}, \frac{3p-5}{4p})$.  In such a
case, a version of Proposition~\ref{prop:discNLKG} would be true
for all $\beta > 3$, with an additional time decay estimate 
$\norm{u(\,\cdot\,,t)}_3 \leq C|t|^{-1/3}(\norm{g}_1 + \norm{h}_1)$.
governing the solution.

%one only requires $\beta > 3$ in order to recover global solutions
%of the nonlinear problem with small initial data.

The conjecture rests on the premise that the pointwise bounds~\eqref{eq:poly34}
and~\eqref{eq:poly56}
apply inside regions of area $|t|^{5/4}$ and $|t|^{4/3}$ respectively,
whereas our statement of Theorem~\ref{thm:poly} indicates larger regions with
area $(\delta t)^2$ and $\delta t^2$ instead.  Refinements of Theorem~\ref{thm:poly}
in a neighborhood of $tV_3$ and $tV_2$ appear consistent with
properties of local coordinate transformations that resolve the degenerate
oscillatory integrals.  It seems reasonable to believe that explicit construction
of the coordinate transformations, based on calculations in
Sections~\ref{sec:proofs} and~\ref{sec:phasefunc}, would
lead to a complete verification of the bounds in~\cite{mielkepatz}.
%We agree with the conjectures stated in~\cite{mielkepatz}
%and hope there is enough detail in Sections~\ref{sec:proofs} and~\ref{sec:phasefunc}
%that an ambitious reader can construct the coordinate tranformsforms explicitly.
\end{rem}

\subsection{Remarks on the discrete wave equation ($\omega = 0$)}
\label{sec:wave}
Many of the implicit constants in Theorem~\ref{thm:poly} and its corollaries,
in particular~\eqref{eq:poly56} and~\eqref{eq:global34},
grow without bound as $\omega$ decreases to zero.
Such behavior occurs because when $\omega$ vanishes, 
the phase function $\gamma$ ceases to
be analytic at the origin, instead developing a singularity of the form
\begin{equation*}
\gamma_0(k) = \big(2\lambda_1(1-\cos k_1) + 2\lambda_2(1-\cos k_2)\big)^{1/2}
= \sqrt{\lambda_1k_1^2 + \lambda_2k_2^2} + O(|k|^3).
\end{equation*}
Meanwhile the curve of $K_2$ (see Lemma~\ref{lem:Kpicture}) winding around
the origin contracts to this one point in the $\omega\searrow 0$ limit.  
At this point the velocity map  $\mathcal{V} = \nabla \gamma_0$
is bounded but not continuous.
Its values are
\begin{equation*}
\nabla\gamma_0(k) = T\bigg(\frac{Tk}{|Tk|}\bigg) + O(|k|^2) \text{ for } 
|k| \ll 1,
\end{equation*}
where $T$ is the diagonal matrix with entries $\sqrt{\lambda_1}$ and 
$\sqrt{\lambda_2}$ respectively.  Note that for all nonzero $k$
the leading order expression
$T(Tk/|Tk|)$ lies on the
ellipse with semiaxis lengths $\sqrt{\lambda_j}$, which is the light cone for
the analogous wave equation on $\R^2$.

A secondary concern affecting Corollary~\ref{cor:poly} is that the auxiliary
function $\eta = \gamma_0^m$ is not smooth, and in fact is unbounded for
the choice $m = -1$ corresponding to the propagator $\sin(t\sqrt{H})/\sqrt{H}$.

Schultz~\cite[Section 3]{schultz98} provides a detailed analysis of the
light-cone behavior for the wave equation when $\lambda_1 = \lambda_2 = 1$.
The leading order term for the sine propagator is a fractional integral of
the Airy function
\begin{equation*}
I(t, vt, \gamma_0^{-1}) \sim \frac{C\sqrt{h(v)}}{t^{2/3}}
\int_0^\infty \frac{{\rm Ai}(z - h(v)(1-|v|)t^{2/3})}{\sqrt{z}}\,dz, \ 
\text{ provided }\  \big|1-|v|\big| \ll t^{-1/2},
\end{equation*}
where $h(v)$ is a smooth function that is not radially symmetric but depends meaningfully on
the direction of $v$.

We claim that the methods in~\cite{schultz98} apply to the more general case
$\lambda_1, \lambda_2 > 0$, $\omega = 0$ with minimal modification.
Specifically, the leading order expression will be
\begin{equation} \label{eq:Airy}
I(t, vt, \gamma_0^{-1}) \sim \frac{C\sqrt{\tilde{h}(v)}}{t^{2/3}}
\int_0^\infty \frac{{\rm Ai}(z - \tilde{h}(v)(1-|T^{-1}v|)t^{2/3})}{\sqrt{z}}\,dz, \ 
\text{ provided }\  \big|1-|T^{-1}v|\big| \ll t^{-1/2}.
\end{equation}
The profile of $\tilde{h}$ depends on the chosen values of $\lambda_1, \lambda_2$,
and in particular the ratio $\lambda_1/\lambda_2$.  It is known from the size and
oscillation properties of the Airy function that 
$\big|\int_0^\infty  z^{-1/2}{\rm Ai}(z-y)\,dz \big| \leq C(1+|y|)^{-1/2}$, leading to
following local (and global) bound.
\begin{prop}
Fix $\omega =0$ and $\lambda_1, \lambda_2 > 0$.  There exists $C < \infty$ such
that for $x \in \Z^2$,
\begin{equation}
|I(t,x,\gamma_0^{-1})| \leq \frac{C}{|t|^{2/3}}
\end{equation}
with the maximum values occurring close to the light cone $\{|T^{-1}x| = |t|\}$.
Spatial decay in the vicinity of the light cone follows the bound
\begin{equation}
|I(t,x,\gamma_0^{-1})| \leq \frac{C}{|t|^{2/3}\big(1+\big||t|-|T^{-1}x|\big|t^{-1/3}\big)^{1/2}}.
\end{equation}
\end{prop}

The location of the four cusps that constitute $V_3$ is relatively easy to
determine for the discrete wave equation once one has built up the
computational structures in Sections~\ref{sec:proofs} and~\ref{sec:phasefunc}.
Cusps occur when the algebraic expressions~\eqref{funcF}
and~\eqref{eq:Gtilde} both vanish at a nontrivial point $(a,b)$.
(The notation $a = \cos k_1$ and $b = \cos k_2$ is introduced in 
Section~\ref{sec:phasefunc} in an effort to streamline calculations.)
If $\omega = 0$ then \eqref{funcF} and~\eqref{eq:Gtilde} are
homogeneous linear functions of $\lambda_1$ and $\lambda_2$, 
so a nontrival simultaneous solution is possible only if the two expressions are
in fact linearly dependent.  After stripping away spurious factors from the determinant
one is left with the relation
\begin{equation*}
\cos k_2 = \frac{-\cos k_1}{1+2\cos k_1}.
\end{equation*}
Plugging this back into the equations yields that $\cos k_1$ must be the unique root of the cubic equation
$\lambda_1(1-a)^2(1+2a) = \lambda_2(1+3a)^2$ lying in the interval $-\frac13 < a < 1$.
When the velocity function $\nabla \gamma$ is evaluated at this special point $(k_1, k_2)$ the
result is as follows.
\begin{prop}
Fix $\omega = 0$ and $\lambda_1, \lambda_2 > 0$.  Let $a^*$ be the unique solution of
\begin{equation*}
\lambda_1(1-a)^2(1+2a) = \lambda_2(1+3a)^2, \quad -\frac13 < a < 1,
\end{equation*} and let $b^* = \frac{-a^*}{1+2a^*}$.  Then the point of $V_3$ in
the first quadrant has coordinates 
\begin{equation}
(v_1, v_2) = \bigg(\Big(\frac{\sqrt{1+3a^*}}{2}\Big)\lambda_1^{1/2}, 
 \Big(\frac{\sqrt{1+3b^*}}{2}\Big)\lambda_2^{1/2}\bigg).
\end{equation}
\end{prop}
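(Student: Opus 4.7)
The plan is to translate the two defining conditions for $K_3$ into explicit polynomial equations in $a := \cos k_1$ and $b := \cos k_2$, identify the unique pair $(a^*, b^*)$ lying in the interior of the first quadrant of the torus, and evaluate $\nabla \gamma_0$ there.

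First I would take the conditions $\det D^2\gamma_0(k) = 0$ and $(\xi\cdot\nabla)^3\gamma_0(k) = 0$ defining $K_3$, starting from $\gamma_0^2(k) = 2\lambda_1(1-\cos k_1) + 2\lambda_2(1-\cos k_2)$. After clearing denominators (positive powers of $\gamma_0$) these are precisely the expressions~\eqref{funcF} and~\eqref{eq:Gtilde}, each of the form $A(a,b)\lambda_1 + B(a,b)\lambda_2 = 0$ as noted in the paragraph preceding the statement. For both to vanish simultaneously with $\lambda_1,\lambda_2>0$ the two linear functionals of $\lambda$ must be proportional, so the $2\times 2$ coefficient determinant $A_FB_G - A_GB_F$ must vanish. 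This is now a polynomial identity in $a,b$ alone; factoring out the spurious terms (powers of $\sin k_j$, which vanish only on the torus axes and correspond to lower-dimensional degeneracies already excluded from the interior cusps) leaves the clean relation $a+b+2ab = 0$, equivalently $b = -a/(1+2a)$.

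Second, using the companion identity $1-b = (1+3a)/(1+2a)$, I would substitute back into \eqref{funcF}. The $\lambda_2$-coefficient collapses against the $\lambda_1$-coefficient by a factor of $(1+2a)$, producing the cubic $\lambda_1(1-a)^2(1+2a) = \lambda_2(1+3a)^2$. To establish uniqueness of a root $a^* \in (-\tfrac13, 1)$ I would set $f(a) := \lambda_1(1-a)^2(1+2a) - \lambda_2(1+3a)^2$, observe that $f(-\tfrac13) = \tfrac{16}{27}\lambda_1 > 0$ and $f(1) = -16\lambda_2 < 0$, and check from $f'(a) = -6\bigl[a\lambda_1(1-a) + \lambda_2(1+3a)\bigr]$ that $f$ has exactly one critical point in $(-\tfrac13, 1)$ (a local maximum inside $(-\tfrac13, 0)$) and is strictly decreasing thereafter, giving exactly one sign change.

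Finally I would evaluate the velocity map at $(k_1, k_2) = (\arccos a^*, \arccos b^*) \in (0,\pi)^2$ using $v_j = \lambda_j \sin k_j/\gamma_0$. The cubic lets me rewrite $\lambda_2(1-b^*) = \lambda_1(1-a^*)^2/(1+3a^*)$, after which $\gamma_0^2$ collapses to $4\lambda_1(1-(a^*)^2)/(1+3a^*)$ and $v_1^2 = \lambda_1(1+3a^*)/4$ drops out. The companion identity $v_2 = \tfrac12 \sqrt{\lambda_2(1+3b^*)}$ is immediate from the invariance of $a+b+2ab = 0$ under simultaneous swaps of the two indices and the two couplings. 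The main obstacle in this plan is the first step: reducing the raw coefficient determinant down to the symmetric relation $a+b+2ab = 0$ requires correctly identifying and discarding several spurious common factors without dropping a genuine branch of $K_3$; the remaining steps are routine algebra or symmetry.
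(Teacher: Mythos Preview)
Your proposal is correct and follows exactly the approach sketched in the paragraph preceding the proposition: you identify the linear dependence of~\eqref{funcF} and~\eqref{eq:Gtilde} in $(\lambda_1,\lambda_2)$, factor the coefficient determinant down to $a+b+2ab=0$, substitute back into $F=0$ to obtain the cubic, and evaluate $\nabla\gamma_0$ at the resulting point. The paper states these steps without detail, and your uniqueness argument for the root and the explicit simplification of $\gamma_0^2$ to $4\lambda_1(1-(a^*)^2)/(1+3a^*)$ fill in precisely the computations the paper omits.
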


\section{Proof of the main results}  \label{sec:proofs}

\subsection{Proof of Theorem \ref{thm:poly}}						\label{sec:polyproof}
The material of this section is presented in the following order: we start with some background information on oscillatory integrals, followed by the main local results (Lemma~\ref{lm:heightk} and Corollary~\ref{cr:OmegakCk}), and the proof of Theorem \ref{thm:poly} is then obtained from local estimates through a partition of unity argument.

%We start with a review of basic definitions and results describing the long-time behavior of oscillatory integrals 
Consider an oscillatory integral in several variables
\begin{equation}
I(t, \eta) = \int_{\R^d} e^{it \phi(k)}\eta(k) dk,
\end{equation}
where $\eta$ is supported in a neighborhood of an isolated critical point $k^*$ of $\phi$ (we follow the notation introduced in \cite{Var} and  \cite{greenblatt}). When it is convenient to do so, one may apply an affine translation so that the critical point is located at the origin. If $\supp (\eta)$ is small enough, $I(t, \eta)$ has an asymptotic expansion
\begin{equation} \label{eq:expansion}
I(t, \eta) \approx e^{it\phi(0)} \sum_{j=0}^{\infty} (d_j(\eta) + d_j'(\eta)\ln(t)) t^{-s_j}, \quad \text{ as } t \to \infty,
\end{equation}
where $s_j$ is an increasing arithmetic progression of positive rational numbers independent of $\eta$. 
The oscillatory index of the function $\phi$ at $k^*$ is defined to be the leading-order exponent $s_0$. We assume that $s_0$ is chosen to be minimal such that in any sufficiently small neighborhood $U$ containing $k^*$ either $d_0(\eta)$ or $d_0'(\eta)$ is nonzero for some $\eta$ supported in $U$.

Estimates~\eqref{eq:poly34} and~\eqref{eq:poly56} are essentially statements about the oscillatory index of
$\phi_v(k)$ at its critical points for different values of $v$.  The following algorithm assists in their computation. 

Suppose $\phi$ is analytic with a critical point at $k^*$.  Locally there is a Taylor series expansion $\phi(k) = c_0 + \sum c_n (k-k^*)^n$, with the sum ranging over all $n \in \Z^d_+$ with $n_1 + \ldots + n_d \geq 2$.  
Let $K \subset \Z^+$ be the collection of all indices $n$ for which $c_n \not= 0$.

Newton's polyhedron associated to $\phi$ at its critical point $k^*$ is defined as the convex hull of the set
\begin{equation*}
\bigcup_{n \in K} (n + \R_+^d),
\end{equation*}
where $\R_+^d$ is the positive octant $\{x \in \R^d: x_j \ge 0 \text{ for } 1 \le j \le d\}$.
We denote Newton's polyhedron of $\phi$ by $N_+(\phi)$.
Newton's diagram of $\phi$ is the union of all compact faces of $N_+(\phi)$. 
%For a given function $f(k)$, $k \in \R^d$ and the associated Taylor series at the origin, 
%$$f(k) \approx \sum_{n\in \Z^d_+} c_n k^n,$$ define $$\supp f = \{n\in \Z^d_+: c_n \ne 0\}.$$
%
%\begin{defn}
%Newton's polyhedron (Newton's diagram) of a Taylor series corresponding to function $f$, denoted by $N_+(f)$ ($N(f)$), is Newton's polyhedron (Newton's diagram) of $\supp f$.
%\end{defn}
%\begin{defn}									\label{def:adap}
%The Newton's distance $d(f)$ is defined as
%\begin{equation}
%d(f) = \inf\{t: (t, t) \in N_+(f)\}.
%\end{equation}
%\end{defn}
Finally, the Newton distance $d(\phi)$ is defined as $d(\phi) = \inf\{t: (t,t) \in N_+(\phi)\}$.

Note that the vanishing of Taylor coefficients is affected by changes to the underlying coordinates, thus each local coordinate system $y$ generates its own sets $K^y$ and $N_+^y(\phi)$ and Newton distance $d^y(\phi)$.  Define the height of an analytic function $\phi$ at its critical point to be $h(\phi) := \sup \{d^y(\phi)\}$,
with the supremum taken over all local coordinate systems $y$.  A coordinate system $y$ is called adapted if $d^y(\phi) = h(\phi)$.

It was shown in \cite{Var} (p. 177, Theorem~0.6) that under some natural assumptions, the oscillation index $s_0$ of a function $\phi$ is equal to $1/h(\phi)$.

We now compute the height of the phase of the integral $I(t, x, \eta)$ at its critical point(s). Recall that with the notation $v = \frac x t$, the phase of $I(t, x, \eta)$ can be written in the form \eqref{eq:phasephiv}. Given a point $k^*\in [-\pi, \pi]^2$, the value $v = \nabla\gamma(k^*) \in \R^2$ is the unique choice for which $\phi_v$ has a critical point at $k^*$.
%$\phi_v(k) = 2 \nabla \gamma(k^*)\cdot k - 2\gamma(k)$.
%
%For convenience, let us shift the critical point to the origin: introduce 
%\begin{align}														
%\notag \phi_{k^*}(k) &= \phi_v(k+k^*) - \phi_v(k^*)\\
%\label{eq:phikstar} &= -2\gamma(k + k^*) + 2 \gamma(k^*) + 2 \nabla \gamma(k^*)\cdot k.
%\end{align}
%Obviously, $\phi_{k^*}(0) = 0$ and $\nabla\phi_{k^*}(0) = 0$. 
Denote the height of this $\phi_{v}$ at $k^*$ by $h(k^*)$.
%\begin{equation} \label{eq:heightkstar}
%h(k^*): = h(\phi_{v^*}).
%\end{equation}
%We have the following

\begin{lem}																				\label{lm:heightk}
The height function $h(k^*)$, defined above, is constant on each of the sets $\{K_i\}_{i=1}^3$ defined
in~\eqref{eq:setK} with the following values: 
%Let sets $\{K_i\}_{i=1}^3$ be defined by \eqref{eq:setK} and fix $k^* \in [-\pi, \pi]^2$. Then for $h(k^*)$ defined in \eqref{eq:heightkstar}, the following hold:
\begin{enumerate}
\item if $k^* \in K_1$, then $h(k^*) = 1$,
\item if $k^* \in K_2$, then $\displaystyle h(k^*) = 6/5$,
\item if $k^* \in K_3$, then $\displaystyle h(k^*) = 4/3$.
\end{enumerate}
\end{lem}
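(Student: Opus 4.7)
The plan is to combine Varchenko's principle $1/h(\phi_v) = s_0$ with a direct reading of the Newton polyhedron of $\phi_v$ in the linear coordinate system that diagonalizes the Hessian $D^2\gamma(k^*)$. As noted in the paragraph preceding Theorem~\ref{thm:poly}, this diagonalizing system is already adapted for every $k^*$, with Lemma~\ref{lm:fourthder} reserved to handle the one delicate case. So once those coordinates are fixed, the height reduces to the Newton distance, and it suffices to locate the lowest-order nonvanishing Taylor monomials of $\phi_v$ at $k^*$.

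Write $c_{n,m} = -\frac{1}{n!\,m!}\,\partial_{y_1}^{n}\partial_{y_2}^{m}\gamma(k^*)$ in the diagonalized coordinates $(y_1,y_2)$ centered at $k^*$. Diagonalization forces $c_{1,1} = 0$, and Proposition~\ref{prop:rank1} prevents both $c_{2,0}$ and $c_{0,2}$ from vanishing simultaneously. For $k^* \in K_1$ both are nonzero, so Newton's polyhedron has vertices $(2,0)$ and $(0,2)$ and the diagonal meets the supporting edge at $t = 1$. For $k^* \in K_2$ exactly one vanishes; say $c_{0,2} = 0$ and $c_{2,0}\neq 0$, so the null eigenvector $\xi$ points along the $y_2$-axis and the condition $(\xi\cdot\nabla)^3\gamma(k^*)\neq 0$ becomes $c_{0,3}\neq 0$. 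Since the remaining cubic lattice points $(3,0),(2,1),(1,2)$ all satisfy $n/2 + m/3 \geq 1$, with $(1,2)$ giving the tightest value $7/6$, Newton's polyhedron has exactly the vertices $(2,0)$ and $(0,3)$, and the diagonal meets the edge $t/2 + t/3 = 1$ at $t = 6/5$.

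The case $k^* \in K_3$ carries the real content. Now $c_{0,3} = 0$ as well, so the second vertex of the polyhedron must migrate to the next pure $y_2$ monomial. This is precisely where Lemma~\ref{lm:fourthder} is needed: it asserts that $c_{0,4}\neq 0$ uniformly on $K_3$ across the entire parameter region $\omega,\lambda_1,\lambda_2 > 0$, and simultaneously confirms that the diagonalizing coordinate system remains adapted in this more degenerate geometry. Granting the lemma, the polyhedron has vertices $(2,0)$ and $(0,4)$ with all other cubic and quartic monomials interior to it, and the diagonal meets $t/2 + t/4 = 1$ at $t = 4/3$. The main obstacle is therefore Lemma~\ref{lm:fourthder} itself: naive dimension counting suggests that tuning the three positive parameters $\omega,\lambda_1,\lambda_2$ could engineer an additional coincidence along the four-point set $K_3$, and excluding this bifurcation requires a direct computation with the defining equations of $\gamma$ in Section~\ref{sec:phasefunc}.
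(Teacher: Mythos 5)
Your Newton-distance computations (the values $1$, $6/5$, $4/3$ read off from the vertices $(2,0),(0,2)$; $(2,0),(0,3)$; and the face on the line $2n_1+n_2=4$) match the paper's, but the proposal has a genuine gap at the step that carries the real weight of the lemma: showing that the Hessian-diagonalizing coordinates are \emph{adapted}, so that the Newton distance actually equals the height. You dispose of this by quoting the paper's expository remark that "the diagonalizing system is already adapted," but that remark is a forward reference to the proof of this very lemma, so invoking it is circular. Adaptedness cannot be skipped: the height is a supremum over all coordinate systems, and a phase like $(y_1-y_2^2)^2$ has Newton distance $4/3$ in the given coordinates yet height $2$, exactly the kind of hidden degeneracy one must rule out. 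The paper does this with two separate criteria of Varchenko: for $k^*\in K_2$ the principal face lies on $3n_1+2n_2=6$, and since $2,3>1$ are coprime, Proposition~0.7 gives adaptedness; for $k^*\in K_3$ that criterion fails (the face line is $2n_1+n_2=4$, with a coefficient equal to $1$), and one must instead apply Proposition~0.8, checking that the principal-face polynomial $P(y_1)=\tfrac{1}{2}\partial^2_{\xi^\perp}\gamma\,y_1^2+\tfrac{1}{2}\partial^2_{\xi}\partial_{\xi^\perp}\gamma\,y_1+\tfrac{1}{24}\partial^4_{\xi}\gamma$ has no real root of multiplicity exceeding $4/3$; its discriminant is a nonzero multiple of $3(\partial^2_{\xi}\partial_{\xi^\perp}\gamma)^2-\partial^2_{\xi^\perp}\gamma\,\partial^4_{\xi}\gamma$, which is precisely what Lemma~\ref{lm:fourthder} controls. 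Your proposal contains none of this, so the identification $h(k^*)=d^y(\phi_v)$ is unsupported in both degenerate cases.

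Relatedly, you misstate what Lemma~\ref{lm:fourthder} provides. It does \emph{not} assert $c_{0,4}=\partial^4_\xi\gamma(k^*)/24\neq 0$; it asserts that $\partial^4_\xi\gamma$ and $\partial^2_\xi\partial_{\xi^\perp}\gamma$ cannot vanish simultaneously (in the precise discriminant form above). Consequently the Newton polyhedron for $k^*\in K_3$ need not have a vertex at $(0,4)$: if $\partial^4_\xi\gamma(k^*)=0$ the relevant vertex is $(1,2)$, which still lies on $2n_1+n_2=4$, so the distance $4/3$ survives, but your description of the polyhedron and of the lemma's role ("confirms that the diagonalizing coordinate system remains adapted") conflates the algebraic input with the adaptedness conclusion that only follows after running it through Varchenko's root-multiplicity criterion.
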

\begin{proof}
Note that $\gamma(k)$ and $\phi_v(k)$ differ by a linear function, so their derivatives coincide except at the
first order.
In the simpler case $k^* \in K_1$,  $\det D^2 \gamma(k^*) = \det D^2\phi_v(k^*) \ne 0$ by definition.
Moreover, the determinant of the Hessian of $\phi_{v}$ at $k^*$ remains non-zero in any local coordinate system,
thus $h(k^*) = d/2 = 1$.

Next consider $k^* \in K_2$. In this case $\det D^2 \gamma(k^*) = 0$ and $(\xi \cdot \nabla)^3\gamma(k^*) \ne 0$, where $\xi$ is an eigenvector of $D^2 \gamma(k^*)$ corresponding to the zero eigenvalue.  The mixed second-order derivative vanishes because $D^2\gamma(k^*)$ has orthogonal eigenvectors, and by Proposition~\ref{prop:rank1} it is guaranteed that $(\xi^\perp\cdot \nabla)^2 \gamma(k^*) \ne 0$.  
%We show that the local coordinate system $\{\xi^\perp, \xi\}$ is adapted and that the Newton's distance in this system is  $\displaystyle 6/5$. Indeed, since 
This information suffices to compute the Newton's distance of $\phi_v$ at $k^*$ in the linear coordinate system with axes
$\{\xi^\perp, \xi\}$ and given by coordinates $k - k^* = y_1 \xi^\perp + y_2 \xi$.  
%Since
%\begin{equation*}	%\label{eq:xider2order}
%\partial^2_{\xi} \phi_{v}(k^*) = 0, \qquad 
%\partial_{\xi} \partial_{\xi^\perp} \phi_{v}(k^*) = 0, \qquad
%\partial^2_{\xi^\perp} \phi_{v}(k^*) \ne 0, \quad \text{ and } \quad \partial^3_{\xi} \phi_{v}(k^*) \ne 0,
%\end{equation*}

The associated Newton's polyhedron is of the form displayed on Figure~\ref{pic:NP2}, with the Newton's distance being $\displaystyle 6/5$. 
\begin{figure}[ht]
\begin{picture}(100,100)
\put(0,20){\vector(1,0){105}}											%axes
\put(110,18){$n_1$}
\put(20,0){\vector(0,1){100}}
\put(6,95){$n_2$}
\put(20,19){\circle{4}}														%open circles
\put(40,19){\circle{4}}	
\put(20,40){\circle{4}}		
\put(20,60){\circle{4}}
\put(40,40){\circle{4}}
\put(57,17){$\times$}                             %x-marks 
\put(17,77){$\times$}
\thicklines
\qbezier(60,20)(60,20)(20,80)								    	%Newton Polehedron boundary
\qbezier(60,20)(60,20)(100,20)
\qbezier(20,80)(20,80)(20,100)
\put(60,70){$N(\phi_v)$}                          %Newton Polyhedron notation
\thinlines
\qbezier(25,95)(25,95)(100,95)											%shading
\qbezier(25,85)(25,85)(100,85)
\qbezier(30,75)(30,75)(58,75)
\qbezier(92,75)(90,75)(100,75)
\qbezier(37,65)(37,65)(100,65)									
\qbezier(43,55)(43,55)(100,55)	
\qbezier(50,45)(50,45)(100,45)	
\qbezier(56,35)(56,35)(100,35)	
\qbezier(62,25)(62,25)(100,25)
%\thicklines
\qbezier[20](20,20)(30,30)(44,44)									%Newton distance marks
\qbezier[20](44,44)(44,34)(44,20)	
\put(46,9){{\small $\frac 6 5$}}												%Newton distance
%\put(112,51){\circle*{4}}
%\put(48,88){\circle*{4}}
%\put(48,51){\circle*{4}}
\end{picture}
\caption{Newton's polyhedron and Newton's distance of the Taylor series corresponding to $\phi_v$ for $k^*\in K_2$}
\label{pic:NP2}
\end{figure}
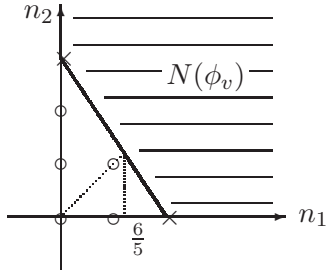

In order to show that $\{y_1, y_2 \}$ is an adapted coordinate system, and therefore $h(k^*) = 6/5$, we use the following result from Varchenko:
\begin{prop}[{\cite[part 2 of Proposition~0.7]{Var}}]  \label{prop:Var07}
Assume that for a given series $f = \sum c_n y^n$, the point $(d(f),d(f))$ lies on a closed compact face $\Gamma$ of the Newton's polyhedron. Let $a_1 n_1 + a_2 n_2 = m$ be the equation of the straight line on which $\Gamma$ lies, where $a_1$, $a_2$, and $m$ are integers and $a_1$ and $a_2$ are relatively prime. Then the coordinate system $y$ is adapted if both numbers $a_1$ and $a_2$ are larger than $1$. 
\end{prop}
The Newton polyhedron displayed on Figure~\ref{pic:NP2} has only one compact face (that also contains the point $(d(\phi_v), d(\phi_v))$), which lies on the line with the equation $3n_1 + 2 n_2 = 6$.  Since $2$ and $3$ are relatively prime, the coordinate system is adapted by Proposition~\ref{prop:Var07}. 
%According to Definition~\ref{def:adap}, the height of the function $\phi_k$ is equal to the Newton's distance in $\{\xi^\perp, \xi\}$, which is $6/5$.

Finally, let $k \in K_3$. To provide a more concise notation,  let $\partial_\xi$ and $\partial_{\xi^\perp}$ indicate the
directional derivatives $\xi \cdot \nabla$ and $\xi^\perp \cdot \nabla$ respectively.  These also serve as partial derivatives $\partial_{y_2}$ and $\partial_{y_1}$ with respect to coordinates $\{y_1, y_2\}$.  By definition of $K_3$ we have
\begin{equation}	\label{eq:xiderK3}
\partial^2_{\xi} \gamma(k^*) = 0, \qquad 
\partial_{\xi} \partial_{\xi^\perp} \gamma(k^*) = 0, \qquad
\partial^2_{\xi^\perp} \gamma(k^*) \ne 0, \quad \text{ and } \quad \partial^3_{\xi} \gamma(k^*) = 0.
\end{equation}
On the other hand, one can show that $\partial^4_{\xi}\gamma(k^*)$ and $\partial^2_{\xi}\partial_{\xi^\perp}\gamma(k^*)$ do not both vanish (see Lemma~\ref{lm:fourthder}). 
The possible Newton's polyhedra that arise are indicated in Figure~\ref{pic:NP3}, however, the Newton distance is equal to $\displaystyle 4/3$ in all situations: 
\begin{figure}[ht]
\begin{picture}(100,120)
\put(0,20){\vector(1,0){105}}											%axes
\put(110,18){$n_1$}
\put(20,0){\vector(0,1){120}}
\put(6,115){$n_2$}
\put(20,20){\circle{4}}														%open circles
\put(40,20){\circle{4}}	
\put(20,40){\circle{4}}		
\put(20,60){\circle{4}}
\put(40,40){\circle{4}}
\put(20,80){\circle{4}}
\put(40,60){\circle{4}}
\put(20,100){\circle{4}}
\put(57,17){$\times$}                             %x-marks 
\put(17,97){$\times$}
\put(37,57){$\times$}
\thicklines
\qbezier(60,20)(60,20)(40,60)								    	%Newton Polyhedron boundary
\qbezier[20](40,60)(30,80)(20,100)
\qbezier[20](40,60)(40,60)(20,120)
\qbezier(60,20)(60,20)(100,20)
%\qbezier[20](20,100)(20,100)(20,120)
\put(60,70){$N(\phi_v)$}                          %Newton Polyhedron notation
\thinlines
\qbezier(25,115)(25,115)(100,115)											%shading
\qbezier(25,105)(25,105)(100,105)
\qbezier(28,95)(28,95)(100,95)
\qbezier(33,85)(33,85)(100,85)
\qbezier(38,75)(38,75)(58,75)
\qbezier(92,75)(90,75)(100,75)
\qbezier(43,65)(43,65)(100,65)									
\qbezier(48,55)(48,55)(100,55)	
\qbezier(53,45)(53,45)(100,45)	
\qbezier(58,35)(58,35)(100,35)	
\qbezier(63,25)(63,25)(100,25)
%\thicklines
\qbezier[20](20,20)(33,33)(46,46)									%Newton distance marks
\qbezier[20](46,46)(46,34)(46,20)	
\put(47,9){{\small $\frac 4 3$}}												%Newton distance
%\put(112,51){\circle*{4}}
%\put(48,88){\circle*{4}}
%\put(48,51){\circle*{4}}
\end{picture}
\caption{Possible Newton's polyhedra and Newton's distance of the Taylor series corresponding to $\phi_v$ for $k^*\in K_3$}
\label{pic:NP3}
\end{figure}
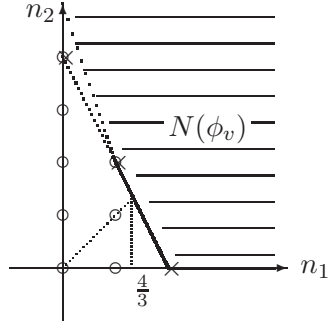

Moreover,  in all situations the face containing the point $(d(\phi_v), d(\phi_v))$ lies on the line with the equation $2n_1  + n_2 = 4$,  and Proposition~\ref{prop:Var07} does not apply. To verify that  the system $\{y_1, y_2\}$ is adapted in this case, we use a different result from Varchenko:
\begin{prop}[{\cite[Proposition~0.8]{Var}}]				\label{prop:Var08}
Assume that for a given series $f = \sum c_n y^n$, the point $(d(f),d(f))$ lies on a closed compact face $\Gamma$ of the Newton's polyhedron. Let $a_1n_1 +  n_2 = m$ be the equation of the straight line on which $\Gamma$ lies, where $a_1$ and $m$ are integers. Let 
\begin{equation}
f_{\Gamma}(y) = \sum_{n \in \Gamma} c_n y^n
\quad \text {and } \quad
P(y_1) = f_{\Gamma}(y_1, 1).
\end{equation}
If the polynomial $P$ does not have a real root of multiplicity larger than $m(1+a_2)^{-1}$, then $y$ is a coordinate system adapted to $f$.
\end{prop}
For the face $\Gamma$, displayed on Figure~\ref{pic:NP3}, we have 
\begin{equation} \begin{aligned}
f_{\Gamma}(y) &= \frac{\partial^2_{\xi^\perp}\gamma(k^*)}{2} y_1^2  + \frac{\partial^2_{\xi}\partial_{\xi^\perp}\gamma(k^*)}{2}\, y_1y_2^2  + 
\frac{\partial^4_{\xi}\gamma(k^*)}{24}\, y_2^4,
\\
%where, with some abuse of notation, ``$\xi$'' and ``$\xi^{\perp}$'' indicate the coordinates of $x$ relative to the basis vectors $\xi$ and $\xi^\perp$.
P(y_1) &= \frac{\partial^2_{\xi^\perp}\gamma(k^*)}{2} y_1^2 
+ \frac{\partial^2_{\xi}\partial_{\xi^\perp}\gamma(k^*)}{2}\, y_1
+ \frac{\partial^4_{\xi}\gamma(k^*)}{24}.
\end{aligned}
\end{equation}
The discriminant of $P$ is 
\begin{equation}
\mathcal{D} = \left(\frac{\partial^2_{\xi}\partial_{\xi^\perp}\gamma(k^*)}{2}\right)^2 - 4\frac{\partial^4_{\xi}\gamma(k^*)}{24} \frac{\partial^2_{\xi^\perp}\gamma(k^*)}{2} = \frac 1 {12} \left(3(\partial^2_{\xi}\partial_{\xi^\perp}\gamma(k^*))^2 - \partial^2_{\xi^\perp}\gamma(k^*) \partial^4_{\xi}\gamma(k^*)\right). 
\end{equation}
It follows from Lemma~\ref{lm:fourthder} that this discriminant is nonzero whenever $k^* \in K_3$, thus $P$ can have real roots of multiplicity at most one. On the other hand, $m(1+a_2)^{-1} = 4/3$, and by Proposition~\ref{prop:Var08} the coordinate system $\{y_1, y_2\}$ is adapted.
\end{proof}

As was stated earlier, the height of the phase function determines the decay order of an oscillatory integral in a neighborhood of its critical point. In \cite{Var} it is shown that the oscillation index of a phase $\phi$ is equal to $1/h(\phi)$, giving both upper and lower bounds for the decay rate.  More recently, Ikromov and M\"uller in \cite{IkMu} showed that the upper bound is stable under linear perturbations of the phase function. Their result, combined with Lemma~\ref{lm:heightk}, brings us the following
\begin{cor}																				\label{cr:OmegakCk}
Let sets $\{K_i\}_{i=1}^3$ be defined by \eqref{eq:setK} and fix $k^* \in [-\pi, \pi]^2$. Then there exist a neighborhood of $k^*$, $\Omega_{k^*}$, and a positive constant $C_{k^*}$ such that for all $\eta$ supported in $\Omega_{k^*}$,
%\begin{enumerate}
%\item 
%$ \displaystyle
%\left|I(t, x, \eta)\right| \le C_{k^*} \norm{\eta}_{C^3(\R^2)} \frac 1 {|t|^{3/4}},
%$ \quad if $k^* \in K_3$,
%\item 
%$ \displaystyle
%\left|I(t, x, \eta)\right| \le C_{k^*} \norm{\eta}_{C^3(\R^2)} \frac 1 {|t|^{5/6}},
%$ \quad if $k^* \in K_2$, 
%\item 
%$ \displaystyle
%\left|I(t, x, \eta)\right| \le C_{k^*} \norm{\eta}_{C^3(\R^2)} \frac 1 {|t|},
%$ \quad \quad if $k^* \in K_1$.
%\end{enumerate} 
\begin{align}
\label{eq:local34}\left|\tilde{I}(t, x, \eta)\right| &\le C_{k^*} \norm{\eta}_{C^3(\R^2)} \frac 1 {|t|^{3/4}},
 \quad \text{ if } k^* \in K_3,\\
\label{eq:local56}\left|\tilde{I}(t, x, \eta)\right| &\le C_{k^*} \norm{\eta}_{C^3(\R^2)} \frac 1 {|t|^{5/6}},
\quad \text{ if } k^* \in K_2,\\ 
\label{eq:local1}\left|\tilde{I}(t, x, \eta)\right| &\le C_{k^*} \norm{\eta}_{C^3(\R^2)} \frac 1 {|t|},
\quad \quad \text{ if } k^* \in K_1.
\end{align} 
for all $x \in \R^2$.
\end{cor}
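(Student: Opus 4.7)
The strategy is to combine Lemma~\ref{lm:heightk} with Varchenko's theorem on oscillatory integrals, and then promote the resulting bound to uniformity in $x$ using the Ikromov--M\"uller stability result. The three target estimates \eqref{eq:local34}--\eqref{eq:local1} correspond exactly to the three heights $4/3$, $6/5$, $1$ supplied by Lemma~\ref{lm:heightk} via the Varchenko relation ``oscillation index $=1/h$''.

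Fix $k^* \in [-\pi, \pi]^2$ and set $v^* := \nabla\gamma(k^*)$, so that the phase $\phi_{v^*}(k) = k\cdot v^* - \gamma(k)$ of $\tilde I(t, tv^*, \eta)$ has a critical point precisely at $k^*$. Lemma~\ref{lm:heightk} identifies the Newton height of $\phi_{v^*}$ at $k^*$ together with an adapted coordinate system. Theorem~0.6 of~\cite{Var} then furnishes an upper bound of the shape $|\tilde I(t, tv^*, \eta)| \le C\,\|\eta\|_{C^3(\R^2)}\,|t|^{-1/h(k^*)}$ for every $\eta$ supported in a sufficiently small neighborhood $\Omega_{k^*}$ of $k^*$. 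Three derivatives of $\eta$ are comfortably sufficient, since the heights appearing here are bounded by $4/3$, well within the regime where the resolution-of-singularities arguments of~\cite{Var} require only a few derivatives on the amplitude.

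To extend the bound to every $x \in \R^2$, write $v := x/t$ and note that $\phi_v(k) = \phi_{v^*}(k) + k\cdot(v-v^*)$, so the entire family $\{\phi_v\}_{v\in \R^2}$ is obtained from $\phi_{v^*}$ by linear perturbations of the phase. The theorem of Ikromov and M\"uller~\cite{IkMu} is exactly the statement that a Varchenko-type upper bound in an adapted coordinate system is stable under arbitrary linear perturbations, with a uniform constant depending only on the unperturbed phase. Applying their result produces the desired $|t|^{-1/h(k^*)}$ bound for $\tilde I(t, x, \eta)$ for every $x \in \R^2$, which, after substituting the three values of $h(k^*)$ from Lemma~\ref{lm:heightk}, gives \eqref{eq:local34}--\eqref{eq:local1}.

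The principal obstacle is precisely this uniformity in $x$: as $v$ moves away from $v^*$, the critical point of $\phi_v$ either shifts within $\Omega_{k^*}$ to a location whose degeneracy structure need not match that of $k^*$, or leaves $\supp\eta$ entirely, in which case integration by parts gives better-than-polynomial decay. A hand-crafted case analysis is conceivable, but organizing the estimates uniformly in $v$ near $v^*$ is delicate because the Newton polyhedron of $\phi_v$ is an unstable object under generic perturbations of the phase; it is exactly this uniform organization that~\cite{IkMu} supplies. Once stability is in hand, the remainder of the corollary is a direct reading-off of Lemma~\ref{lm:heightk}, with the neighborhood $\Omega_{k^*}$ chosen small enough that both Varchenko's expansion and the Ikromov--M\"uller stability estimates are valid there.
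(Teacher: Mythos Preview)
Your proposal is correct and follows essentially the same route as the paper: compute the heights via Lemma~\ref{lm:heightk}, then invoke the Ikromov--M\"uller stability theorem to obtain the $|t|^{-1/h(k^*)}$ bound uniformly over all linear perturbations $\phi_v = \phi_{v^*} + k\cdot(v-v^*)$, i.e.\ over all $x\in\R^2$. The only cosmetic difference is that you present this as two steps (Varchenko at $v^*$, then IkMu for stability), whereas the paper applies Theorem~1.1 of~\cite{IkMu} in one stroke, since that theorem already packages the Varchenko-type upper bound together with uniformity under linear perturbations.
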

\begin{proof}
By a direct consequence of Theorem 1.1 in \cite{IkMu}, for a point $k^* \in [-\pi, \pi]^2$, there exist a neighborhood $\Omega_{k^*}$ and a positive constant $C_{k^*}$ such that
\begin{equation}
\left|\int_{\Omega_{k^*}} e^{it (\phi_{v}(k) + x\cdot k)} \eta(k) \, dk\right| \le C_{k^*} \norm{\eta} \frac 1 {|t|^{h(k^*)}},
\end{equation}
for all $x \in \R^2$ and $\eta$ supported in $\Omega_{k^*}$. This, together with the result of Lemma~\ref{lm:heightk}, proves the claim.
\end{proof}
We can extract some additional important information about the neighborhoods $\Omega_{k^*}$, introduced in Corollary~\ref{cr:OmegakCk}, that will be useful for the main part of the proof. Specifically, if $k^* \in K_1$ then the corresponding neighborhood $\Omega_{k^*}$ does not contain any points from $K_2 \cup K_3$, and if $k^* \in K_2$ then $\Omega_{k^*}$
is disjoint from $K_3$.  To prove the latter claim, suppose there is a point $k_0 \in K_3$ that also belongs to $\Omega_{k^*}$.
Then with $v = \nabla \gamma(k_0)$ the oscillation index of $\phi_v$ in a neighborhood of $k_0$ is equal to 3/4.  For $\eta$
supported in a small neighborhood of $k_0$ inside of $\Omega_{k^*}$, and $x = tv$, the asymptotic lower bound dictated by~\eqref{eq:expansion} and statement (3) of Lemma~\ref{lm:heightk} contradicts the decay rates of~\eqref{eq:local56} and~\eqref{eq:local1}.

%Indeed, assume there is such a point, say $k_0$, in $\Omega_{k^*}$. Then by Theorem 0.6 of \cite{Var} and statement (3) of %Lemma~\ref{lm:heightk}, the oscillation index of a phase $\phi_v$ in a neighborhood of $k_0$ is equal to $3/4$ (which gives sharp %decay rate for the corresponding oscillatory integral). This, however, contradicts both  \eqref{eq:local56} and \eqref{eq:local1}. A %similar argument can be used to show that if $k^* \in K_1$, then $\Omega_{k^*}$ also does not contain any points from $K_2$. 

At last, we need the following well-known estimate.
\begin{lem}												\label{lm:fastpolydecay}
If $\supp \eta$ does not contain any critical points of the phase function $k \cdot x - t\gamma(k)$ then for any $M > 0$,
\begin{equation}
|\tilde{I}(t,x,\eta)| \le C(M,\eta, d)\frac {1}{|t|^M}.
\end{equation}
where $d$ is the infimum of $|x/t - \nabla \gamma(k)|$ over the support of $\eta$.
\end{lem}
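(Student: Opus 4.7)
The plan is to prove this via the standard non-stationary phase argument: repeated integration by parts with an operator that reproduces $e^{i\psi}$, where $\psi(k) = k\cdot x - t\gamma(k)$. First I would record the key computation $\nabla_k \psi(k) = x - t\nabla\gamma(k) = t\bigl(\tfrac{x}{t} - \nabla\gamma(k)\bigr)$, so that the hypothesis on $d$ translates directly into the uniform lower bound $|\nabla\psi(k)| \geq d\,|t|$ on $\supp\eta$. Since $\omega > 0$, the function $\gamma$ is real analytic on all of $\R^2$ and its derivatives of every order are uniformly bounded on $\supp\eta$, so the reciprocal $1/|\nabla\psi|^2$ is smooth on $\supp\eta$.

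Next I would introduce the first-order differential operator
\begin{equation*}
L f(k) = \frac{1}{i\,|\nabla\psi(k)|^2}\,\nabla\psi(k)\cdot\nabla f(k),
\end{equation*}
chosen so that $L\bigl(e^{i\psi(k)}\bigr) = e^{i\psi(k)}$. Its formal transpose is
\begin{equation*}
L^T f(k) = -\frac{1}{i}\,\nabla\cdot\Bigl(\frac{\nabla\psi(k)}{|\nabla\psi(k)|^2}\,f(k)\Bigr),
\end{equation*}
and since $\eta$ is smooth and compactly supported, integration by parts produces no boundary terms. Iterating $M$ times yields the identity
\begin{equation*}
\tilde{I}(t,x,\eta) = \frac{1}{(2\pi)^2}\int_{\R^2} e^{i\psi(k)}\bigl(L^T\bigr)^M\eta(k)\,dk.
\end{equation*}

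The heart of the proof is then the bookkeeping showing $\bigl\|(L^T)^M\eta\bigr\|_{L^\infty(\supp\eta)} \leq C(M,\eta,d)\,|t|^{-M}$. Each application of $L^T$ divides by $|\nabla\psi|^2$ and multiplies by either $\nabla\psi$ (pulling out one factor of $t$ from $-t\nabla\gamma$) or a second derivative of $\psi$ (which is also proportional to $t$). A short induction on $M$, using the uniform bounds on derivatives of $\gamma$ and $\eta$ on $\supp\eta$ together with $|\nabla\psi| \geq d|t|$, shows that every term in $(L^T)^M\eta$ carries at most a factor $C_M\,\|\eta\|_{C^M}\,d^{-2M}\,|t|^{-M}$. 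Combining this with the trivial $L^1$ bound on the integral and the fact that $\supp\eta$ has finite measure gives the desired estimate.

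The only mildly delicate point, and the one I would be most careful about, is organizing the Leibniz expansion of $(L^T)^M$ so that the power of $1/|\nabla\psi|$ in each summand is accompanied by the correct matching power of $\nabla\psi$ or $D^2\psi$; this is what ensures that the final bound is $|t|^{-M}$ rather than something worse. Once that combinatorial point is handled, the lemma follows immediately.
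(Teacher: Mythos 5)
Your proof is correct, and it is the standard non-stationary phase argument (Hörmander-type operator $L$ with $L e^{i\psi} = e^{i\psi}$, iterated integration by parts, and bookkeeping of $|t|$-powers using $|\nabla\psi| \geq d|t|$ together with $|D^\alpha\psi| \lesssim |t|$ for $|\alpha|\geq 1$). The paper itself offers no proof of Lemma~\ref{lm:fastpolydecay}, introducing it only with the phrase ``we need the following well-known estimate,'' so your argument supplies precisely what the authors treat as background. The one small remark worth making explicit is that the lemma's hypothesis (no critical points in the compact set $\supp\eta$) guarantees $d>0$, which is what makes $1/|\nabla\psi|^2$ smooth on $\supp\eta$ and permits the iteration; and that the exponent $M$ may be taken to be an integer without loss, since the claim for non-integer $M>0$ follows from the case $\lceil M\rceil$ once $|t|\geq 1$, the complementary range $|t|<1$ being handled by the trivial bound $|\tilde I|\leq (2\pi)^{-2}\norm{\eta}_{L^1}$.
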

\begin{proof} [Proof of Theorem \ref{thm:poly}.]
Fix $\delta > 0$. The nonstationary phase bound~\eqref{eq:polyN} follows immediately from the construction, as the
gradient of $x\cdot k - t\gamma(k)$ must have magnitude at least $\dist(x, tV_1)$.

To prove \eqref{eq:poly34}, Take the system of neighborhoods $\{\Omega_k\}_{k \in [-\pi, \pi]^2}$ described in Corollary~\ref{cr:OmegakCk}. By construction $\{\Omega_k\}_{k \in [-\pi, \pi]^2}$ covers $[-\pi, \pi]^2$ and we can choose a finite sub-cover, say $\{\Omega_{j}\}_{j=1}^{N_0}$. Now, let a collection of smooth functions $\{\omega_j\}_{j=1}^{N_0}$ form a partition of unity with respect to $\{\Omega_j\}_{j=1}^{N_0}$, then 
\begin{equation}				\label{eq:partunityineq}							
|I(t, x, \eta)| \le \sum_{j} |I(t, x, \eta_j)| = \sum_j |\tilde{I}(t, x, \eta_j)|,
\end{equation}
where $\eta_j = \eta \, \omega_j$, is supported in $\Omega_j$. Since for $t$ away from  zero every integral that satisfies  \eqref{eq:local56} or \eqref{eq:local1} also satisfies \eqref{eq:local34}, and all three are uniformly bounded for all times, we have
\begin{equation} 																												\label{eq:34proof}
|I(t, x, \eta)| \le \sum_{j} C_j \norm{\eta_j}_{C^3(\R^2)} \frac 1 {|t|^{3/4}} = C(\eta) \frac 1 {|t|^{3/4}}.
\end{equation}
Note that even though \eqref{eq:34proof} holds for all $x \in \R^2$, better estimates are available when $x$ is removed from $tV_3$.

To prove the estimates~\eqref{eq:poly56} and~\eqref{eq:poly1} we need to refine our construction of the cover so that the
$|t|^{-3/4}$ bound in~\eqref{eq:local34} is never invoked (in the latter case one should also avoid applying~\eqref{eq:local56}). The following construction will suit both situations. 

The function $\mathcal{V}$, defined by \eqref{eq:nablagammamap}, is uniformly continuous on $[-\pi, \pi]^2$, so we can choose an $0 < \epsilon = \epsilon(\delta) < \pi/2$ such that $$\diam(\mathcal{V}(B_{\epsilon})) < \delta/2$$ for every ball $B_{\epsilon}$ of radius $\epsilon$. At each $k \in [-\pi, \pi]^2$ define a smaller $\delta$-dependent neighborhood
\begin{equation}
\Omega_k(\delta) = \Omega_k \cap B_{\epsilon}(k),
\end{equation}
where $\Omega_k$ is again as in Corollary~\ref{cr:OmegakCk}. As before, pick a finite sub-collection of $\{\Omega_k(\delta)\}_{k \in [-\pi, \pi]^2}$ that is also a cover of $[-\pi, \pi]^2$, say $\{\Omega_{k_j}(\delta)\}_{j=1}^N$, and generate a partition of unity $\omega_j$ subordinate to this cover. For simplicity of notation we will write $\Omega_j = \Omega_{k_j}(\delta)$ with $j \in \{1, 2, \ldots , N\}$.

Sort the neighborhoods $\Omega_j$ according to the location of their 
"center" point $k_j$.  For each $m = 1, 2, 3$ let 
$J_m := \{j \in \{1, \ldots, N\}: k_j \in K_m\}$, where $K_m$ are the sets
defined in~\eqref{eq:setK}.  The discussion following
Corollary~\ref{cr:OmegakCk} indicates that 
\begin{equation}
\bigg(\bigcup_{j \in J_1\cup J_2} \Omega_j\bigg) \cap K_3 = \emptyset 
\quad \text{ and } \quad 
\bigg(\bigcup_{j \in J_1} \Omega_j\bigg) \cap K_2 = \emptyset.
\end{equation} 

Suppose $x \in \Z^2$ is chosen so that $\dist(x, tV_3) > t\delta$.
In other words, $|x/t - \nabla \gamma(k^*)| > \delta$ for any $k^* \in K_3$.
Moreover $|x/t - \nabla \gamma(k) | > \delta/2$ for all 
$k \in \bigcup_{j \in J_3} \Omega_j$ because each neighborhood has radius
at most $\epsilon$.  Split the sum~\eqref{eq:partunityineq} into two parts
\begin{equation}										\label{eq:56initial}
|I(t, x, \eta)| \le \sum_{j \in J_3} |\tilde{I}(t, x, \eta_j)| + \sum_{j\notin J_3} |\tilde{I}(t, x, \eta_j)|.
\end{equation}
Lemma~\ref{lm:fastpolydecay} applies to each term in the first sum,
with $d = \delta/2$.  Terms in the second sum are bounded by~\eqref{eq:local56} or~\eqref{eq:local1}.
The slowest time-decay out of these has the rate $|t|^{-5/6}$ from~\eqref{eq:local56}, which
verifies~\eqref{eq:poly56}.

The argument is similar if $\dist(x, t(V_3 \cup V_2)) > t\delta$.
One splits~\eqref{eq:partunityineq} in the parts
\begin{equation}										\label{eq:1initial}
|I(t, x, \eta)| \le \sum_{j \in J_2\cup J_3} |\tilde{I}(t, x, \eta_j)| + \sum_{j\notin J_2 \cup J_3} |\tilde{I}(t, x, \eta_j)|,
\end{equation}
and once again Lemma~\ref{lm:fastpolydecay} applies to each term in the first 
sum, with $d = \delta/2$, and terms in the second sum are bounded by~\eqref{eq:local1}.
This is sufficient to verify~\eqref{eq:poly1}, completing the proof of Theorem~\ref{thm:poly}.
\end{proof}

\subsection{Proof of Exponential Bounds}
\begin{proof}[Proof of Theorem~\ref{thm:exp1}] 
Note that $\gamma^2(k)$ extends to a complex-analytic function on $k \in \bC^2$
that is periodic under the shifts $k_j \to k_j + 2\pi$, $j = 1,2$.  After composition with
the holomorphic map $\cos(t \sqrt{z})$, the same is true of $\cos(t\,\gamma(k))$.
By shifting the contour of integration for $k_1$ and $k_2$,
the left-hand quantity in~\eqref{eq:LiebRobinsonCos} is equal to
\begin{align*}
\frac{e^{-\mu|x|}}{(2\pi)^2}
\Big| \int_{[-\pi,\pi]^2} \cos(t\,\gamma(k + i\mu{\textstyle \frac{x}{|x|}}))
e^{ik\cdot x}\,dk\Big| 
&\leq \max_{k\in[-\pi,\pi]^2} 
\big|\cos(t\,\gamma(k + i\mu{\textstyle \frac{x}{|x|}}))\big| e^{-\mu|x|} \\
&\leq \max_{k\in[-\pi,\pi]^2} 
e^{|\mathrm{Im}\, t\,\gamma(k + i\mu{\textstyle \frac{x}{|x|}})|}e^{-\mu|x|} \\
&= e^{-\mu(|x|-v_\mu |t|)}
\end{align*}
where $v_\mu = \mu^{-1} \max\{|\mathrm{Im}\,\gamma(k + i\tilde{k})|: 
k \in [-\pi,\pi]^2, |\tilde{k}| = \mu\}$.  Referring back to the definition of $\gamma(k)$
in~\eqref{gammaABk}, one obtains a bound $v_\mu \leq \frac{2}{\mu}
\sqrt{\lambda_1 + \lambda_2}\,\sinh(\mu/2)$ by applying the inequality
$|\mathrm{Im}\,\sqrt{z^2 + w^2}| \leq \sqrt{(\mathrm{Im}\, z)^2 + (\mathrm{Im}\,w)^2}$
for pairs of complex numbers.

The same argument applies to the sine propagator as well, thanks to the bound
$\big| \frac{\sin z}{z}\big| \leq e^{|\mathrm{Im}\,z|}$.  By shifting the integration
contour as above, the left-hand quantity in~\eqref{eq:LiebRobinsonSin} is equal to
\begin{align*}
t \frac{e^{-\mu|x|}}{(2\pi)^2}
\Big| \int_{[-\pi,\pi]^2} \frac{\sin(t\,\gamma(k + i\mu{\textstyle \frac{x}{|x|}}))}
{t\,\gamma(k + i\mu{\textstyle \frac{x}{|x|}})}
e^{ik\cdot x}\,dk\Big| 
&\leq t \max_{k\in[-\pi,\pi]^2} 
e^{|\mathrm{Im}\, t\,\gamma(k + i\mu{\textstyle \frac{x}{|x|}})|}e^{-\mu|x|} \\
&\leq e^{-\mu(|x|-v_\mu |t|)}
\end{align*}
for any  $v_\mu > \mu^{-1} (1+ \max\{|\mathrm{Im}\,\gamma(k + i\tilde{k})|: 
k \in [-\pi,\pi]^2, |\tilde{k}| = \mu\})$.

The computation for~\eqref{eq:LiebRobinsonQtoP} is essentially identical
to~\eqref{eq:LiebRobinsonCos} except that it contains an extra factor of
$\max\{|\gamma(k + i\tilde{k}): |\tilde{k}| = \mu\}$, estimated here by
$\omega + 2\sqrt{\lambda_1 + \lambda_2}\,\cosh(\mu/2)$.

\end{proof}

\begin{proof}[Proof of Theorem~\ref{thm:exp2}]
It will be important to note that $V_0$ is the complement of a convex subset
of the plane. This is stated as part of Proposition~\ref{prop:Vpicture} and
will be proved in Section~\ref{sec:phasefunc}.

For each $\tilde{\mu} \in \R^2$, shifting contours of integration into the
complex plane leads to the bound
\begin{align*}
\Big|\frac{1}{(2\pi)^2}\int_{[-\pi,\pi]^2}  
\cos(t\,\gamma(k))e^{ik\cdot x}\,dk\Big| 
&= \frac{e^{-\tilde{\mu} \cdot x}}{(2\pi)^2}
\Big| \int_{[-\pi,\pi]^2} \cos(t\,\gamma(k + i\tilde{\mu}))
e^{ik\cdot x}\,dk\Big| \\
&\leq \max_{k\in[-\pi,\pi]^2} 
e^{|\mathrm{Im}\, t\,\gamma(k + i\tilde{\mu})|}e^{-\tilde{\mu} \cdot x}
\end{align*}

By assumption $\frac{x}{t}$ lies outside the convex balanced compact set 
$\{\nabla \gamma(k): k \in [-\pi,\pi]^2\} = \overline{V}_1$. 
The complex derivative of $\gamma$ indicates that 
$\mathrm{Im}\,\gamma(k + i\tilde{\mu}) = 
(\nabla\gamma(k))\cdot \tilde{\mu} + o(|\tilde{\mu}|)$, and the
implicit constant in $o(|\tilde{\mu}|)$ converges uniformly across 
$k \in [-\pi,\pi]^2$.
Choose $\mu > 0$ small enough so that 
\begin{equation*}
\big|\mathrm{Im}\,\gamma(k + i\tilde{\mu}) - \tilde{\mu}\cdot (\nabla\gamma(k))\big|
\leq \frac12 \dist({\textstyle \frac{x}{t}}, V_1) |\tilde{\mu}|
\end{equation*}
whenever $|\tilde{\mu}| = 2\mu$.  Then
\begin{align*}
\Big|\frac{1}{(2\pi)^2}\int_{[-\pi,\pi]^2}  
\cos(t\,\gamma(k))e^{ik\cdot x}\,dk\Big| 
&\leq \inf_{|\tilde{\mu}| = 2\mu} \, \max_{k \in [-\pi,\pi]^2}
e^{|(t \nabla \gamma(k))\cdot \tilde{\mu}|}e^{\dist(x,\,tV_1)\mu}
e^{-x\cdot \tilde{\mu}} \\
&= \inf_{|\tilde{\mu}| = 2\mu} \, \max_{k \in [-\pi,\pi]^2}
e^{(t \nabla \gamma(k) - x)\cdot \tilde{\mu} }e^{\dist(x,\, tV_1)\mu} \\
&= e^{-2\mu\, \dist(x,\, tV_1)}e^{\dist(x,\, tV_1)\mu}.
\end{align*}
The first equality follows from the fact that $\nabla \gamma(k)$ is an odd
function, so the absolute value can be optimized with either sign.  The second
equality asserts a geometric principle that given a closed convex set $S$ and
a point $x \not\in S$, 
\begin{equation*}
\sup_{y\in S}\, (y-x) \cdot v \geq -|v|\, \dist(x, S)
\end{equation*}
with equality taking place if $y\in S$ minimizes the distance and $v$ is
parallel to $x-y$.  The argument for the sine propagator is essentially identical
as the extra factor of $t$ that also appears in the proof of
Theorem~\ref{thm:exp1} can be overcome by choosing a slightly smaller
value of $\mu>0$ and introducing a large constant $C_1$.  The value of
$C_2$ is limited by estimating the maximum of $|\gamma(k + i\tilde{k})|$
over $|\tilde{k}| \ll 1$.
\end{proof}

\section{Applications to quantum systems} \label{sec:quantum}
In this section  we apply the main integral estimates from Section~\ref{sec:results} to obtain dispersive estimates in infinite-volume harmonic systems on $\Z^2$. 
For a detailed formal introduction of such systems and their main properties, see \cite{BorSims2012} and references therein.

\subsection{Main Results} %\label{sec:mainresults}

Recall first a definition of a Weyl algebra over a real linear space $\mathcal{D}$, equipped with a symplectic, non-degenerate
bilinear form $\sigma$. The Weyl algebra over $\mathcal{D}$, which we will denote by $\mathcal{W}( \mathcal{D})$, is
defined to be a $C^*$-algebra generated by Weyl operators, i.e., non-zero elements $W(f)$, 
associated to each $f \in \mathcal{D}$, which satisfy
\begin{equation} \label{eq:invo}
W(f)^* = W(-f) \quad \mbox{for each } f \in \mathcal{D} \, ,
\end{equation}
and
\begin{equation} \label{eq:weylrel}
W(f) W(g) = e^{-i \sigma(f,g)/2} W(f+g) \quad \mbox{for all } f, g \in \mathcal{D} \, .
\end{equation}
Such an algebra with additional properties that $W(0) = \idty$, $W(f)$ is unitary for all $f \in \mathcal{D}$, and $\| W(f) - \idty \| = 2$ for all $ f \in \mathcal{D} \setminus \{0 \}$ is unique up to a $*$-isomorphism (cf. \cite{bratteli1997}, Theorem 5.2.8). See \cite{manuceau1973} and \cite{bratteli1997} for details of Weyl algebra formalism.

Some of the standard choices of $\mathcal{D}$ are $\mathcal{D} = \ell^2( \mathbb{Z}^2)$ or $\mathcal{D} = \ell^1( \mathbb{Z}^2)$ with the symplectic form
\begin{equation} \label{eq:hsig}
\sigma(f,g) = \mbox{Im} \left[ \langle f, g \rangle \right] \, \quad \mbox{for } f, g \in \mathcal{D}.
\end{equation}

The infinite volume harmonic dynamics is introduced by a one-parameter group of $*$-automorphisms $\tau_t$ on $\mathcal{W}( \mathcal{D})$, such that
\begin{equation} \label{eq:quasifree}
\tau_t(W(f))=W(T_t f) \quad \mbox{for all } f \in \mathcal{D},
\end{equation}
where
\begin{align*}% \label{eq:defft}
T_tf &=   \cos(t\sqrt H)f + i \frac{\sin(t\sqrt H)}{\sqrt H}\mbox{Re} f - \sqrt H \sin(t\sqrt H)\mbox{Im}f = u_t(x,t) + iu(x, t).
\end{align*}
The function $u$ here is the solution of the discrete Klein-Gordon equation \eqref{eq:discKleinGordon} with initial conditions $g(x) = \mbox{Im} f$ and $h(x) = \mbox{Re} f$.

The commutator norm $\left\| \left[ \tau_t(W(f)), W(g) \right]  \right\|$ is a quantity that measures how fast the dynamics spreads information through the system (specifically, between the supports of $f$ and $g$). Notice that when $t=0$ and the supports of $f$ and $g$ are disjoint, the commutator is zero (in particular, it follows from \eqref{eq:weylrel}). 

 The following notation will be used throughout the rest of this section: let $X = \supp(f)$, $Y = \supp(g)$, $X - Y$ be the difference set
\begin{equation*}
X - Y = \{x - y: x \in X, y \in Y\},
\end{equation*}  
and $B_{r}(S)$ represent an open neighborhood of a set $S$ of radius $r$.

Our first result describes pairs of $W(f)$ and $W(g)$ for which the corresponding commutator norm decays polynomially. There are three possible polynomial regimes;  the choice of the appropriate regime depends on the mutual location of the supports.

\begin{thm} \label{thm:uniest} Let $\tau_t$ be the harmonic dynamics defined as above on $\mathcal{W}( \ell^2( \mathbb{Z}^2))$ and sets $V_i$, $i=2, 3$, be defined by \eqref{eq:setsV}. Then the following statements hold.
\begin{enumerate}
\item There exists a number $C_3 > 0$, such that for all $f,g \in \ell^1( \mathbb{Z}^2)$,
\begin{equation}                                               \label{eq:polyestt34} 
\left\| \left[ \tau_t(W(f)), W(g) \right]  \right\|  \leq  \min \left[2,  \frac{C_3 \| f \|_1 \| g \|_1}{|t|^{3/4}} \right].
\end{equation}
\item For any $\delta>0$, there exists a number $C_2 = C_2(\delta) > 0$, such that for all $f,g \in \ell^1( \mathbb{Z}^2)$ with $X - Y \in \Z^2\setminus B_{t\delta}(tV_3)$,
\begin{align} 
\label{eq:polyestt56} \left\| \left[ \tau_t(W(f)), W(g) \right]  \right\|  &\leq  \min \left[2,  \frac{C_2 \| f \|_1 \| g \|_1}{|t|^{5/6}} \right].
\end{align}
\item For any $\delta>0$, there exists a number $C_1 = C_1(\delta) > 0$, such that for all $f,g \in \ell^1( \mathbb{Z}^2)$ with $X - Y \in \Z^2\setminus B_{t\delta}(t(V_2 \cup V_3))$,
\begin{align}
\label{eq:polyestt1} \left\| \left[ \tau_t(W(f)), W(g) \right]  \right\|  &\leq  \min \left[2,  \frac{C_1 \| f \|_1 \| g \|_1}{|t|} \right].
\end{align}
\end{enumerate}
\end{thm}
\begin{proof}
We start by noticing that equation \eqref{eq:weylrel} together with the fact that all the Weyl operators are unitary imply
\begin{equation} \label{eq:easylrb}
\left\| \left[ \tau_t(W(f)), W(g) \right]  \right\| = \left| 1 - e^{i {\rm Im}[\langle T_tf, g \rangle]} \right| \leq \left| \langle T_tf, g \rangle \right|   \, ,
\end{equation}
for all $f,g \in \ell^2( \mathbb{Z}^2)$. On the other hand, Corollary~\ref{thm:unif} yields that there exists $C>0$ such that
\begin{equation}
\left| \langle T_tf, g \rangle \right|  \leq \|f\|_1 \|g\|_1  \frac {C} {|t|^{3/4}}, \quad \text{ for all } |t| \ge 1,
\end{equation}
proving \eqref{eq:polyestt34}.

The assumption that $x - y \notin B_{t\delta}(tV_3)$ for all $x \in X$ and $y \in Y$, guarantees that each term in $\langle T_tf, g \rangle$ can be estimated by one of the following:  \eqref{eq:poly56}, \eqref{eq:poly1}, or by the results of Theorem~\ref{thm:exp2}, proving \eqref{eq:polyestt56}. 
A similar observation also proves \eqref{eq:polyestt1}.
\end{proof}

For pairs of Weyl observables that are supported ``far" from each other, Theorems~\ref{thm:exp1} and~\ref{thm:exp2} provide Lieb-Robinson exponential bounds for the corresponding commutator norm.

\begin{thm}
Let $\tau_t$ be the harmonic dynamics on $\mathcal{W}( \ell^2( \mathbb{Z}^2))$ and sets $V_i$, $i=0, 1$, be defined in \eqref{eq:setsV}. For an arbitrary fixed $\delta>0$, assume that $X - Y \in tV_0$ with $\mathrm{dist}(X-Y, t V_1) \ge \delta$. Then there exist constants $C = C(\delta)$ and $\mu = \mu(\delta)$ such that
\begin{equation} \label{eq:LRboundV1}
\left\| \left[ \tau_t(W(f)), W(g) \right]  \right\|  \leq C \| f \|_1 \| g \|_1 e^{-\mu\, \mathrm{dist}(X-Y,\, tV_1)}.
\end{equation}
Moreover, for every $\mu > 0$ there exist constants 
$0 < v_\mu \leq \frac{1}{\mu}(1 + 2\sqrt{\lambda_1 + \lambda_2}\,\sinh(\mu/2))$ and
$C_\mu < \omega + 2\sqrt{\lambda_1 + \lambda_2}\cosh(\mu/2)$ such that
\begin{equation}  \label{eq:LRbound}
\left\| \left[ \tau_t(W(f)), W(g) \right]  \right\| \leq C_\mu e^{-\mu(\dist(X, Y)-v_\mu|t|)}.
\end{equation}
\end{thm}

Initially proven by Lieb and Robinson for non-relativistic quantum spin systems (\cite{lieb1972}), Lieb-Robinson bounds have been extended to various lattice systems (in particular, see \cite{nachtergaele2009} for infinite and \cite{raz2009} for finite harmonic and anharmonic lattices). In general, for non-relativistic models Lieb-Robinson bounds indicate the exponential decay with order $\mu$ of interactions outside of a light cone with velocity $v_\mu$. These estimates can also be used for establishing local existence and uniqueness of evolution for some infinite-dimensional models. 

Equation \eqref{eq:LRbound} improves the upper bound on $v_\mu$ (specifically, its behavior with respect to the on-site energy parameter $\omega$) as compared to the results in \cite{nachtergaele2009}. Equation \eqref{eq:LRboundV1} is a new form of a Lieb-Robinson bound, where velocity depends on the direction of movement and is determined by the shape of the outer boundary of the set $V_1$.

\section{Properties of the phase function}																					\label{sec:phasefunc}
Throughout this section we will be using the variables $(a, b)$ defined by
\begin{equation}																									\label{eq:ab}
a(k) = \cos k_1, \quad b(k)=\cos k_2,
\end{equation}
along with $k = (k_1, k_2)$. The function $\gamma$, defined in \eqref{gammaABk}, as a function of $(a, b)$ takes the form
\begin{equation}																								\label{gammaAB}
\gamma(a, b) = \left(\om^2 + 2\la_1(1-a) + 2\la_2(1-b)\right)^{1/2}.
\end{equation}
A straightforward calculation shows that the Hessian matrix of $\gamma$ can be written in the form
\begin{equation}																				\label{eq:HessianMatrix}
D^2\gamma(k) = \frac 1 {\gamma^3(k)} 
\left(
\begin{array}{c c}
\la_1 a \gamma^2(k) - \la_1^2 (1-a^2) & -\la_1 \la_2 \sin k_1 \sin k_2\\  -\la_1 \la_2 \sin k_1 \sin k_2 & \la_2 b \gamma^2(k) - \la_2^2 (1-b^2)
\end{array}
\right)
\end{equation}
and, thus,
\begin{align}																									
\notag \det D^2\gamma(k) &= \frac {\la_1 \la_2}{\gamma^4(k)}\left(ab\gamma^2(k) - \la_1 b(1-a^2) -\la_2 a(1-b^2) \right)\\ \label{eq:Hessianfunction} &= \frac {\la_1 \la_2}{\gamma^4(k)}\left(ab \om^2 - \la_1 b(1-a)^2 - \la_2 a(1-b)^2 \right).
\end{align}

\begin{prop} \label{prop:rank1}
For every choice of $\omega$, $\lambda_1, \lambda_2 > 0$, there is no $k \in [-\pi, \pi]^2$ such that 
$D^2\gamma(k)$ is the zero matrix.
\end{prop}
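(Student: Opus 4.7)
The plan is to argue directly from the explicit formula~\eqref{eq:HessianMatrix}, exploiting the fact that for $D^2\gamma(k)$ to be the zero matrix all three independent entries must vanish simultaneously. The factor $1/\gamma^3(k)$ out front is harmless since $\gamma^2(k)\geq \omega^2 > 0$ on all of $[-\pi,\pi]^2$, so it suffices to show that the three polynomial expressions inside the matrix cannot vanish at a common $k$.

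First I would read off the off-diagonal entry, namely $-\lambda_1\lambda_2\sin k_1\sin k_2$. Since $\lambda_1,\lambda_2>0$, its vanishing forces $\sin k_1 = 0$ or $\sin k_2 = 0$. By the mirror symmetry of the roles of $k_1$ and $k_2$ in~\eqref{eq:HessianMatrix}, it is enough to treat the case $\sin k_1=0$, i.e.\ $a=\cos k_1\in\{-1,+1\}$.

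Now the $(1,1)$ entry simplifies dramatically: because $1-a^2=0$ whenever $a=\pm 1$, the entry reduces to
\begin{equation*}
\lambda_1 a\,\gamma^2(k)-\lambda_1^2(1-a^2) \;=\; \lambda_1 a\,\gamma^2(k).
\end{equation*}
Since $\lambda_1>0$, $a=\pm 1\ne 0$, and $\gamma^2(k)\geq \omega^2>0$, this expression is strictly nonzero. Hence the $(1,1)$ entry of $D^2\gamma(k)$ is nonzero, contradicting the assumption that $D^2\gamma(k)=0$. The argument for the symmetric case $\sin k_2=0$ is identical using the $(2,2)$ entry.

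No step looks genuinely hard: the proof is essentially a two-line case analysis. The only place to be attentive is to remember that the reasoning crucially uses $\omega>0$ to guarantee $\gamma(k)\ne 0$; indeed, as the paper notes in its discussion of the $\omega=0$ endpoint, the Hessian structure degenerates at the origin when the mass parameter vanishes, so the positivity of $\omega$ is exactly what makes the statement hold globally on $[-\pi,\pi]^2$.
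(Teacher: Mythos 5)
Your proof is correct and follows essentially the same two-line case analysis as the paper: the off-diagonal entry forces $\sin k_1=0$ or $\sin k_2=0$, and then the corresponding diagonal entry reduces to $\pm\lambda_j/\gamma(k)\ne 0$ because $1-a^2$ (resp.\ $1-b^2$) vanishes.
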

\begin{proof}
The off-diagonal entries vanish only if $\sin k_1 = 0$ or $\sin k_2 = 0$.  Without loss of generality, suppose
$\sin k_1=0$.  Then $a = \cos k_1 = \pm 1$, so that $\partial_{k_1}^2 \gamma(k)
 = \pm \lambda_1\gamma^{-1}(k) \not= 0$.  If $\sin k_2 = 0$, then $\partial_{k_2}^2 \gamma(k)$
is nonzero for similar reasons.
\end{proof}

One of our goals is to describe the set of zeros of the Hessian determinant,
\begin{equation}																							\label{eq:Phi1}
\Phi_1 = \{ k \in [-\pi, \pi]^2 : \det D^2\gamma(k) = 0\}.
\end{equation}
However, it will be convenient to first study the zeros of $\det D^2\gamma$ as a function of $(a, b)$:\begin{equation}																							\label{eq:Gamma11}
\Gamma_1 = \{ (a, b) \in [-1, 1]^2 : \det D^2\gamma(a, b) = 0\}.
\end{equation}
Using the notation
\begin{align}																						
\notag F(a, b) &= ab\gamma^2(a, b) - \la_1 b(1-a^2) -\la_2 a(1-b^2)\\
\label{funcF} &= ab \om^2 - \la_1 b(1-a)^2 - \la_2 a(1-b)^2,
\end{align}
we have that $\det D^2\gamma(a, b) = 0$ if and only if $F(a, b) = 0$ (since $\la_1, \la_2, \gamma(k) \ne 0$), and therefore, 
\begin{equation}																					\label{eq:Gamma1}
\Gamma_1 = \{ (a, b) \in [-1, 1]^2 : F(a, b) = 0\}.
\end{equation}
Sometimes it will be convenient to treat $F$ as a function of $k$, and in those cases we will keep the same notation, $F = F(k)$. Note also that with the notation \eqref{funcF} the Hessian matrix takes the form:
\begin{equation}																				\label{eq:HessianMatrix1}
D^2\gamma(k) = \frac 1 {\gamma^3(k)} 
\left(
\begin{array}{c c}
\la_1 \partial_b F & -\la_1 \la_2 \sin k_1 \sin k_2\\  -\la_1 \la_2 \sin k_1 \sin k_2 & \la_2 \partial_a F
\end{array}
\right).
\end{equation}

\begin{lem}																															\label{lem:Fprop}
The equation $F(a, b) = 0$ defines an implicit function $b=B_F(a)$ in $[-1, 1]^2$ that is locally continuously differentiable at any $(a, b) \in \Gamma_1$, $|b| \ne 1$. It has the following properties:

\begin{enumerate}
\item For any $a \in [-1, 1]$, there exists at most one $b \in [-1, 1]$ so that $F(a, b) = 0$.

\item For any $(a, b) \in \Gamma_1\backslash\{(0, 0)\}$, $|b| \ne 1,$ 
\begin{equation}																						\label{eq:Fder1}
\frac {dB_F}{da}\,(a, b) = - \frac {\la_1}{\la_2} \frac {b^2(1-a^2)}{a^2(1-b^2)} \le 0,
\end{equation}
and
\begin{equation}																					  \label{eq:Fder1at0}
\frac {dB_F}{da}\,(0, 0) = - \frac {\la_2}{\la_1}.
\end{equation}

\item The set $\Gamma_1$ defined in \eqref{eq:Gamma1} is the graph of $b = B_F(a)$, which consists of two continuous arcs $\Gamma_1^1 \cup \Gamma_1^2$ as displayed in Figure~\ref{Gamma1fig}. 
The first arc, $\Gamma_1^1,$ is located in the first quadrant and is convex. The second arc, $\Gamma_1^2$, passes through the second and fourth quadrant and is concave.
\end{enumerate}

The equation $F(a,b) = 0$ also defines an implicit function $a = A_F(b)$.  All of the statements in this lemma remain true if
$(a, A_F, \lambda_1)$ and $(b, B_F, \lambda_2)$ switch roles.
 
\begin{figure}[ht]
\begin{picture}(160,160)
\put(0,70){\vector(1,0){160}}											%axes
\put(165,70){$a$}
\put(80,0){\vector(0,1){150}}
\put(70,147){$b$}
\thinlines
\qbezier(30,120)(30,120)(130,120)									%unit square
\qbezier(30,20)(30,20)(130,20)
\qbezier(30,120)(30,120)(30,20)
\qbezier(130,120)(130,120)(130,20)
\put(73,124){$1$}
\put(134,75){$1$}
\put(14,75){$-1$}
\put(65,10){$-1$}
\thicklines
\qbezier(30,90)(92,92)(95,20)											%arches
\put(60, 90){$\Gamma_1^2$}
\qbezier(110,120)(111,101)(130,100)
\put(110, 90){$\Gamma_1^1$}
\end{picture}
\caption{Set $\Gamma_1$}
\label{Gamma1fig}
\end{figure}
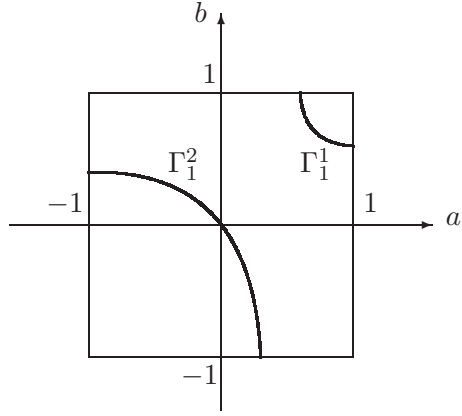
\end{lem}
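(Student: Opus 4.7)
My plan is to prove the four enumerated claims in the following order: simplify the gradient of $F$ on its zero set, invoke the implicit function theorem, deduce uniqueness from the quadratic structure of $F$ in $b$, and analyze the discriminant to both exhibit the two arcs and describe their convex/concave shape.

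The starting observation is that $F(0,b) = -\lambda_1 b$ and $F(a,0) = -\lambda_2 a$, so $\Gamma_1$ meets the coordinate axes only at the origin. At any other point both $a$ and $b$ are nonzero and the relation $F(a,b) = 0$ can be used to replace $b\omega^2$ in the direct expression $\partial_a F = b\omega^2 + 2\lambda_1 b(1-a) - \lambda_2(1-b)^2$; after cancellation this collapses to $\partial_a F = \lambda_1 b(1-a^2)/a$, and the symmetric computation gives $\partial_b F = \lambda_2 a(1-b^2)/b$. Direct substitution at the origin yields $\partial_a F(0,0) = -\lambda_2$ and $\partial_b F(0,0) = -\lambda_1$. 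In every case $\partial_b F \neq 0$ throughout $\Gamma_1 \cap \{|b| \neq 1\}$, so the implicit function theorem produces $B_F$ as a local $C^1$ function, the quotient $-\partial_a F/\partial_b F$ reduces to the formulas \eqref{eq:Fder1} and \eqref{eq:Fder1at0}, and the sign $B_F' \le 0$ is immediate from $1-a^2 \ge 0$ for $|a|\le 1$ and $1-b^2 > 0$ for $|b| < 1$.

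For uniqueness I read $F(a,\cdot) = 0$ as a quadratic in $b$ whose constant and leading coefficients both equal $-\lambda_2 a$; by Vieta the two roots multiply to $1$, so any root in $(-1,1)$ is paired with one outside $[-1,1]$, and a root at $\pm 1$ is necessarily a double root with itself, leaving at most one $b \in [-1,1]$ that solves the equation. When $a=0$ it degenerates to $-\lambda_1 b = 0$. To locate the graph of $B_F$ globally, I track the discriminant of this quadratic, which factors as $\Delta(a) = Q_1(a) Q_2(a)$ with $Q_1(a) = a\omega^2 - \lambda_1(1-a)^2$ and $Q_2(a) = a(\omega^2 + 4\lambda_2) - \lambda_1(1-a)^2$. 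Each $Q_j$ is a downward-opening quadratic whose roots multiply to $1$ and sum to a positive number, so each has two positive real roots $a_1 < a_2$ and $a_3 < a_4$; the inequality $Q_2 - Q_1 = 4\lambda_2 a$ for $a > 0$ forces the interlacing $0 < a_3 < a_1 < 1 < a_2 < a_4$, and hence $\{\Delta \ge 0\} \cap [-1,1] = [-1, a_3] \cup [a_1, 1]$. At the discriminant zeros the double root collapses to $B_F(a_1) = 1$ and $B_F(a_3) = -1$; combined with monotonicity, the value $B_F(0) = 0$, and explicit solutions of $F(\pm 1, b) = 0$ showing $B_F(\pm 1) \in (0,1)$, these endpoints place $\Gamma_1^1$ entirely in the open first quadrant and identify $\Gamma_1^2$ as a continuous arc passing from the second quadrant through the origin into the fourth.

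The convex/concave claim reduces to a sign analysis of $B_F''$. Rewriting the defining relation off the axes as $\lambda_1 f(a) + \lambda_2 f(b) = \omega^2$ with $f(x) = (1-x)^2/x$, implicit differentiation yields
\[
B_F''(a) = -\frac{\lambda_1 f''(a) + \lambda_2 f''(b)\,(B_F'(a))^2}{\lambda_2 f'(b)}, \qquad f''(x) = \frac{2}{x^3}.
\]
On $\Gamma_1^1$ both $a,b \in (0,1)$, so $f''(a), f''(b) > 0$ and $f'(b) < 0$, and $B_F'' > 0$ is immediate. On $\Gamma_1^2$ the signs of $f''(a)$ and $f''(b)$ disagree, and this is the step I expect to be the main obstacle. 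My plan is to substitute the explicit form of $B_F'$ and reduce the sign question to an algebraic comparison of $\lambda_2 |a|(1-b^2)^2$ with $\lambda_1 |b|(1-a^2)^2$, then invoke the defining relation $F = 0$ to rewrite the difference as a manifestly signed sum. The parallel statements for $a = A_F(b)$ follow from the evident symmetry $F(a,b;\lambda_1,\lambda_2) = F(b,a;\lambda_2,\lambda_1)$ of the defining polynomial.
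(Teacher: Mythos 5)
Your route overlaps with the paper's at the key derivative identities $\partial_a F = \lambda_1 b(1-a^2)/a$ and $\partial_b F = \lambda_2 a(1-b^2)/b$ on $\Gamma_1$ (these are \eqref{eq:aderaF} and \eqref{eq:bderbF}), but diverges usefully elsewhere. The paper merely asserts that $F(a,\cdot)=0$ has at most one root in $[-1,1]$; your Vieta observation (leading and constant coefficients of the quadratic both equal $-\lambda_2 a$, so the roots multiply to $1$) supplies the missing reason. Where the paper infers the two-arc structure from monotonicity plus the boundary values $B_F(\pm 1)$, $A_F(\pm 1)$, you track the discriminant $\Delta = Q_1 Q_2$, interlace its zeros $a_3 < a_1 < 1 < a_2 < a_4$, and locate the double roots $b=\pm 1$ exactly at $a_1$ and $a_3$; this is more explicit and recovers the same picture. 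The reformulation $\lambda_1 f(a) + \lambda_2 f(b) = \omega^2$ with $f(x) = (1-x)^2/x$ streamlines the $B_F'$ and $B_F''$ computations away from the axes.

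The one step you outline but do not carry out---concavity of $\Gamma_1^2$---is exactly the computation the paper performs, and your plan is the right one. After clearing positive denominators, the sign of $B_F''$ is the sign of $\lambda_1 b(1-a^2)^2 + \lambda_2 a(1-b^2)^2$; expanding $(1-a^2)^2 = (1-a)^2(1+a)^2$ and using $ab\omega^2 = \lambda_1 b(1-a)^2 + \lambda_2 a(1-b)^2$ from $F=0$ yields
\begin{equation*}
\lambda_1 b(1-a^2)^2 + \lambda_2 a(1-b^2)^2 = ab\big(\omega^2 + \lambda_1(1-a)^2(2+a) + \lambda_2(1-b)^2(2+b)\big),
\end{equation*}
where the bracket is positive for $|a|,|b|<1$, so the sign is that of $ab$, which is negative throughout $\Gamma_1^2 \setminus \{(0,0)\}$. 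One loose end: the $f$-form of the relation is singular at the origin, so to assert concavity there you should either appeal to continuity of $B_F''$ (which exists because $F$ is a polynomial and $\partial_b F(0,0) = -\lambda_1 \neq 0$) or compute directly, as the paper does, $B_F''(0,0) = -2\lambda_2(\omega^2 + 2\lambda_1 + 2\lambda_2)/\lambda_1^2 < 0$.
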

\begin{proof}
For fixed $a \in [-1,1]$ the function $F(a,b)$ is quadratic with respect
to $b$ and has at most one solution in the interval $b\in[-1,1]$.  
The same is true if $b$ is held fixed and one seeks the value $a = A_F(b)$
for which $F(A_F(b),b) = 0$.  As a result $B_F$ is a well-defined function
over some subset of $[-1,1]$ and $A_F$ serves as its inverse.

One can write out the value of $B_F(a)$ explicitly using the quadratic formula
and derive all the stated properties from this expression.  We present a
more general approach here in preparation for subsequent computations
where an exact formula is not readily available.

The function $F$ is continuously differentiable and 
\begin{align*}
\frac {\partial F}{\partial b}(a,b) = a\om^2 - \la_1 (1-a)^2 + 2\la_2 a(1-b) &= \frac 1 b \left(ab\om^2 - \la_1 b(1-a)^2 \right) + 2\la_2 a(1-b)\\
&= \frac{1}{b}\big(F(a,b) + \lambda_2 a(1-b)^2\big) + 2\lambda_2 a(1-b).
\end{align*}
Therefore, 
\begin{equation}																								\label{eq:bderbF}
b\frac {\partial F}{\partial b}(a,b)%\right|_{F(a, b) = 0} = \la_2 a(1-b)^2 + 2\la_2 ab(1-b)
= \la_2 a(1-b^2) \quad \text{for all } (a, b) \in \Gamma_1,
\end{equation} 
and this quantity is nonzero so long as $a \ne 0$ and $|b| < 1$.  When $a =0$ one can compute directly that $\frac {\partial F}{\partial b}(0,b) = - \la_1 \ne 0$.  Therefore 
\begin{equation*}
\left. \frac {\partial F}{\partial b}(a,b)\right|_{\Gamma_1} = 0
\quad \text{if and only if } |b| = 1.
\end{equation*}
An identical argument applied to the variable $a$ shows that
\begin{equation}  \label{eq:aderaF}
a \frac{\partial F}{\partial a}(a,b) = \lambda_1 b(1-a^2) \quad \text{for all }(a,b) \in \Gamma_1
\end{equation}
and furthermore that 
$\left. \frac {\partial F}{\partial a}(a,b)\right|_{\Gamma_1} = 0$
if and only if $|a| = 1$.
%thus, in addition, we can talk about the function $a = A_F(b)$ defined by $F(a, b) = 0.$

%To prove the first property it is enough to notice that for a fixed $a \in [-1, 1]$, $F(a, b) = 0$ turns into a quadratic equation in $b$, that has at most one solution in $[-1, 1]$. Same thing holds if we fix $b \in [-1, 1]$ first, thus both are true functions and are invertible.

In order to prove equation \eqref{eq:Fder1}, we differentiate $F(a, b) = 0$ implicitly with respect to $a$.
%to get
%\begin{equation}
%\frac {db}{da} = - \frac {b \om^2 - \la_2(1-b)^2 + 2 \la_1 b(1-a)}{a \om^2 - \la_1(1-a)^2 + 2 \la_2 a(1-b)}.
%\end{equation}
%Taking into account that for all $(a, b) \in \Gamma_1$, 
%\begin{equation}
%ab \om^2 = \la_1 b(1-a)^2 + \la_2 a(1-b)^2,
%\end{equation}
%we get
%\begin{equation}
%\frac {db}{da} = - \frac b a \cdot\frac {\la_1 b(1-a)^2 + 2 \la_1 ba(1-a)}{\la_2 a(1-b)^2 + 2 \la_2 ab(1-b)} = - \frac %{\la_1}{\la_2} \frac {b^2(1-a^2)}{a^2(1-b^2)},
%\end{equation}
Taking advantage of~\eqref{eq:bderbF} and~\eqref{eq:aderaF} the resulting
expression reduces to
\begin{equation}
\frac{db}{da} = -\frac{b}{a}
\bigg( \frac{a\, \partial_aF(a,b)}{b\, \partial_bF(a,b)}\bigg)
 = -\frac{\lambda_1 b^2(1-a^2)}{\lambda_2 a^2(1-b^2)}
\end{equation}
for all $(a,b) \in \Gamma_1$ away from the origin. At the origin,~\eqref{eq:Fder1at0} 
is obtained directly from the facts that 
$\frac{\partial F}{\partial a}(a,0) = -\lambda_2$ and $\frac{\partial F}{\partial b}(0,b) = -\lambda_1$.
This is consistent with the implicit derivative in~\eqref{eq:Fder1} since both statements  demand that the ratio $b^2/a^2$ converges to $(\lambda_2/\lambda_1)^2$ as $(a,b)$ approaches the origin along $\Gamma_1$.

By definition $\Gamma_1$ must be the graph of $B_F$, which is continuously
differentiable with negative slope whenever it lies inside $(-1,1)^2$ and has
slope zero when $|a| = 1$.  
Given that $0 < B_F(-1) < B_F(1) < 1$ and $0 < A_F(-1) < A_F(1) < 1$, 
it follows that $\Gamma_1$ consists of two separate arcs.  One arc, denoted by
$\Gamma_1^1$, connects the points $(A_F(1),1)$ and $(1, B_F(1))$
within the first quadrant.  The second arc, denoted by $\Gamma_1^2$,
connects $(-1, B_F(-1))$ to $(A_F(-1),-1)$ and
passes through the origin (since $F(0,0) = 0$) along the way.

%To describe the shape of $\Gamma_1$ (property (3)), first note that $F(0, 0) = 0$ and, therefore, $\Gamma_1$ passes through the origin. Also, $F(a, b) > 0$ for all other $(a, b) \in [-1, 0] \times [-1, 0]$, meaning that $\Gamma_1$ does not have any points besides the origin in the third quadrant. Further, it is easy to see that $A_F(-1), A_F(1), B_F(-1), B_F(1) \in (0, 1)$ with $A_F(-1) < A_F(1)$ and $B_F(-1) < B_F(1).$ Since the function $B_F$ is strictly decaying, we conclude that the points $(-1, B_F(-1)), (0, 0)$, and $(A_F(-1), -1)$ must be connected with an arc, creating the arc $\Gamma_1^1$ that can only have points in the second and fourth quadrant. Similarly, the points $(A_F(1), 1)$ and $(1, B_F(1))$ also have to be connected with an arc in the first quadrant, we call it $\Gamma_1^2.$ 

Finally, a routine derivation shows that
\begin{equation} \label{eq:Fder2}
\left.\frac{d^2b}{da^2}\right|_{\Gamma_1} = \frac{2\la_1}{\la_2^2}\frac{b^2}{a^4(1-b^2)^3}\left(\la_1 b(1-a^2)^2 + \la_2 a (1-b^2)^2 \right),
\end{equation}
which is clearly positive if $a, b > 0$, thus proving that $\Gamma_1^1$ is convex. Using again that $F(a, b) = 0$, we rewrite the second derivative in the form
\begin{align*}
%\frac{d^2b}{da^2} &= \frac{2\la_1}{\la_2^2}\frac{b^2}{a^4(1-b^2)^3}\left(\omega^2 ab (1+a)^2 - \la_2 a(1-b)^2(1+a)^2 + \la_2 a (1-b^2)^2 \right)\\
%&= \frac{2\la_1}{\la_2^2}\frac{b^2}{a^4(1-b^2)^3}\,ab\left(\omega^2 (1+a)^2 + \la_2 (1-b)^2(1 - \frac a b)(2+a+b)\right).
\frac{d^2b}{da^2} &= \frac{2\la_1}{\la_2^2}\frac{b^2}{a^4(1-b^2)^3}\left(F(a,b) + \lambda_1 b(1-a^2)^2
+ \lambda_2 a(1-b^2)^2\right) \\
&= \frac{2\la_1}{\la_2^2}\frac{b^2}{a^4(1-b^2)^3}\,ab\left(\omega^2 + \lambda_1(1-a)^2(2+a)
+ \lambda_2 (1-b)^2(2+b)\right)
\end{align*} 
%In the second and fourth quadrants $(1 - \frac a b)$ is positive and the sign of this second derivative is determined by the sign of $ab$, which is negative everywhere on $\Gamma_1^2$ except the origin. 
So long as $0 < |a|, |b| < 1$, the sign of this second derivative is determined
by the sign of $ab$, which is negative everywhere on $\Gamma_1^2$ except the origin.
A separate calculation shows that
\begin{equation}
\frac{d^2b}{da^2}(0, 0) = \frac{-2\la_2(\om^2 + 2\la_1+ 2\la_2)}{\la_1^2} < 0,
\end{equation}
finishing the proof.  The above expression can be simplified further by noting that $\omega^2 + 2\la_1 + 2\la_2 = \gamma^2$
when $a = b = 0$.
\end{proof}

\begin{rem} \label{rem:Phi}
Recalling that $a = \cos k_1$ and $b = \cos k_2$ for points $(k_1, k_2) \in [-\pi, \pi]^2$, we can reconstruct the graph of $\Phi_1$ (defined in \eqref{eq:Phi1}) from the graph of $\Gamma_1$ (see Figure \ref{Phifig}). The arc $\Gamma_1^1$ corresponds to the closed curve around zero. The origin in the $ab$-plane has the four points $(\pm \pi/2, \pm \pi/2)$ as its inverse image and the arc $\Gamma_1^2$ turns into the closed curve around the point $(\pi, \pi)$ on the compactified torus $[-\pi, \pi]^2$.

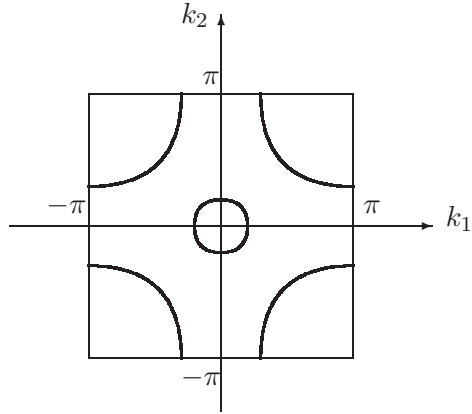
\begin{figure}[ht]
\begin{picture}(160,160)
\put(0,70){\vector(1,0){160}}											%axes
\put(165,70){$k_1$}
\put(80,0){\vector(0,1){150}}
\put(65,147){$k_2$}
\thinlines
\qbezier(30,120)(30,120)(130,120)									%square
\qbezier(30,20)(30,20)(130,20)
\qbezier(30,120)(30,120)(30,20)
\qbezier(130,120)(130,120)(130,20)
\put(73,124){$\pi$}
\put(134,75){$\pi$}
\put(14,75){$-\pi$}
\put(65,10){$-\pi$}
\thicklines
\qbezier(30,55)(65,55)(65,20)											%arches
\qbezier(30,85)(65,85)(65,120)
\qbezier(95,20)(95,55)(130,55)
\qbezier(95,120)(95,85)(130,85)
\qbezier(70,70)(70,60)(80,60)											%circle in the middle
\qbezier(90,70)(90,60)(80,60)	
\qbezier(90,70)(90,80)(80,80)	
\qbezier(80,80)(70,80)(70,70)	
\end{picture}
\caption{Set $\Phi_1$}
\label{Phifig}
\end{figure}
\end{rem}
By definition a function $\phi_v(k) = k \cdot v -\gamma(k)$ may have a degenerate critical point only at $k^* \in \Phi_1$, and this occurs with the choice $v = \nabla\gamma(k^*)$. %We are interested in determining the height function $h(k^*)$ beyond simple vanishing of the Hessian determinant.  The result of Proposition~\ref{prop:rank1} implies that the multiplicity of the zero eigenvalue of $D^2\gamma(k)$ is at most one at every $k$.
For each $k \in \Phi_1$, let $\xi = \xi(k)$ be an eigenvector of $D^2\gamma(k)$ corresponding to its zero eigenvalue (this is unique up to scalar multiplication by Proposition~\ref{prop:rank1}).  It follows that
$\partial_{\xi}^2 \gamma (k) = 0$, where the notation $\partial_{\xi} = \xi \cdot \nabla$ indicates a directional derivative as in Section~\ref{sec:proofs}. 
%for a nonzero vector $v$, for $k \in \Phi_1$. 
According to the partition~\eqref{eq:setK}, points $k \in \Phi_1$ belong to either $K_2$ or $K_3$ depending on whether the third derivative of $\gamma$ in the direction of $\xi$ is also zero.  The following result is helpful. 
\begin{lem}																																	\label{lm:3der}
Let $U \subset \R^d$ be a neighborhood of a point $k_0$ and let $f \in C^3(U)$. Assume that the Hessian matrix $D^2 f$ has a zero eigenvalue of multiplicity one at $k_0$ and let $\xi$ be a corresponding eigenvector.  Then 
\begin{equation}
\partial_{\xi}^3 f(k_0) = 0
\end{equation}
 if and only if 
\begin{equation}																														\label{eq:3dereq}
\partial_{\xi} (\det D^2 f)(k_0) = 0.
\end{equation}
\end{lem}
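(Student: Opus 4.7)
The statement is coordinate-invariant: replacing $\xi$ by a scalar multiple scales both $\partial_\xi^3 f(k_0)$ and $\partial_\xi(\det D^2 f)(k_0)$ by the same factor, so I may normalize $|\xi|=1$. I would then reduce to adapted coordinates by choosing an orthonormal basis $\{e_1,\ldots,e_{d-1},\xi\}$ of $\R^d$. In these coordinates, the hypothesis says that $D^2 f(k_0)$ is a block-diagonal symmetric matrix whose upper-left $(d-1)\times(d-1)$ block $M$ is nonsingular (because the zero eigenvalue has multiplicity one) and whose last row and column are zero.

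The key tool is Jacobi's formula for the derivative of a determinant,
\begin{equation*}
\partial_\xi \det D^2 f(k) = \operatorname{tr}\bigl(\operatorname{adj}(D^2 f(k))\cdot \partial_\xi D^2 f(k)\bigr),
\end{equation*}
evaluated at $k=k_0$. Since $D^2 f(k_0)$ is symmetric with a simple zero eigenvalue corresponding to $\xi$, its adjugate has rank one and equals $(\det M)\,\xi\xi^T$ in the adapted basis, with $\det M\ne 0$. Substituting, I get
\begin{equation*}
\partial_\xi \det D^2 f(k_0) = (\det M)\,\xi^T\bigl(\partial_\xi D^2 f(k_0)\bigr)\xi.
\end{equation*}

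The next step is to recognize the right-hand factor as a third directional derivative. Writing $\partial_\xi D^2 f$ entrywise as the matrix with entries $\partial_\xi\partial_i\partial_j f$ and expanding the quadratic form,
\begin{equation*}
\xi^T\bigl(\partial_\xi D^2 f(k_0)\bigr)\xi = \sum_{i,j}\xi_i\xi_j\,\partial_\xi\partial_i\partial_j f(k_0) = \partial_\xi^3 f(k_0).
\end{equation*}
Combining the last two displays yields $\partial_\xi\det D^2 f(k_0) = (\det M)\,\partial_\xi^3 f(k_0)$, and since $\det M \ne 0$, one quantity vanishes exactly when the other does.

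The only mildly delicate point is justifying the adjugate computation: one must argue that for a symmetric matrix of rank $d-1$ with simple zero eigenvalue along $\xi$, the adjugate is a nonzero multiple of $\xi\xi^T$. This follows by expressing $\operatorname{adj}$ in the eigenbasis: if $A=\sum_i\mu_i v_i v_i^T$ with $\mu_k=0$ and $v_k=\xi$, then $\operatorname{adj}(A)=\sum_i\bigl(\prod_{j\ne i}\mu_j\bigr)v_iv_i^T$, and only the $i=k$ term survives, with coefficient $\prod_{j\ne k}\mu_j\ne 0$. No further obstacles are anticipated; the lemma is essentially a direct consequence of Jacobi's formula combined with the rank-one structure of the adjugate.
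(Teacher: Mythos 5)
Your proof is correct and takes essentially the same route as the paper: both pass to an orthonormal basis adapted to $\xi$ and observe that in the Leibniz expansion of $\partial_\xi\det D^2f(k_0)$ only the cofactor corresponding to the degenerate eigenvalue survives, with coefficient equal to the product of the nonzero eigenvalues. You package this via Jacobi's formula and the rank-one adjugate while the paper expands the determinant directly, but the underlying computation is identical.
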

\begin{proof}
Apply a unitary change of variables to change the coordinate system to one that diagonalizes the matrix $(D^2 f)(k_0)$ and in which $\xi$ points in the direction of $e_1$. In the new system, the only non-zero term of the gradient of $\det(D^2 f)(k)$ at $k_0$ is the gradient of $(\partial^2_{11}f)(k)$ at $k_0$ multiplied by a nonzero scalar - the product of all nonzero eigenvalues of $(D^2 f)(k_0)$. On the other hand, we have that 
\begin{equation}
(\partial^2_{11} f)(k) = \frac 1 {\|\xi\|^2} \xi^T D^2f(k) \xi = \frac 1 {\|\xi\|^2} \partial_{\xi}^2 f (k),
\end{equation}
showing that $(\nabla (\det D^2 f))(k_0)$ is a non-zero scalar multiple of $\nabla(\partial_{\xi}^2 f)(k_0)$.
\end{proof}

Lemma~\ref{lm:3der} allows us to determine whether points $k \in \Phi_1$ satisfy $\partial_{\xi}^3 \gamma(k) = 0$ by identifying the set of solutions of
\begin{equation}			\label{eq:D2xider}
\partial_{\xi} (\det D^2 \gamma)(k) =0.
\end{equation}
Using equations \eqref{eq:Hessianfunction}, \eqref{funcF}, and notation \eqref{eq:ab} we have
\begin{equation}																											\label{eq:gradH}
\nabla \det D^2\gamma(k)\big|_{k \in \Phi_1} = \frac {\la_1 \la_2}{\gamma^4(k)}\nabla F(k) = -\frac {\la_1 \la_2}{\gamma^4(k)}\left(\partial_a F\sin k_1,\;  \partial_b F\sin k_2\right). \end{equation}
The components of $\xi$ can be constructed from the elements of the matrix \eqref{eq:HessianMatrix1}, with one
possible choice being
% Since $\sin k_1$ and $\sin k_2$ cannot both be zero if $k \in \Phi_1$, we set
\begin{equation}	\label{eq:xi}																		
\xi(k) = \left(
\begin{array}{c}
\partial_a F \\ \la_1 \sin k_1 \sin k_2
\end{array}
\right).% \quad \text{ if } \sin k_1 \ne 0,
\end{equation}
%and 
%\begin{equation}																			
%\xi(k) = \left(
%\begin{array}{c}
%\la_2 \sin k_1 \sin k_2 \\ \partial_b F 
%\end{array}
%\right),\quad \text{ if } \sin k_2 \ne 0.
%\end{equation}
%The two cases lead to the same outcome so without loss of generality we assume that $\sin k_1 \ne 0$. Combining the corresponding choice of $\xi$ with \eqref{eq:gradH}, we obtain
Note that $\xi(k)$ vanishes as $\sin k_1$ approaches zero along $\Phi_1$ (for example by applying~\eqref{eq:aderaF}), and
in fact
\begin{equation}
\frac{1}{\sin k_1} \xi(k) \to \begin{pmatrix} 0 \\ \lambda_1 \sin k_2 \end{pmatrix} \not= 0
\end{equation}
when $k_1 \to 0,\,\pm \pi$ along this curve.  The combination of~\eqref{eq:xi} with~\eqref{eq:gradH} yields
\begin{equation}																			\label{eq:detHder}
\partial_{\xi} (\det D^2 \gamma)(k) = -\frac {\la_1 \la_2 \sin k_1}{\gamma^4(k)}\left( (\partial_a F)^2 + \la_1(1 - b^2)\partial_b F\right), \quad k \in \Phi_1.
\end{equation}
One should not be concerned with the vanishing of $\sin k_1$ in this formula as it can be counteracted by modifying~\eqref{eq:xi} by a suitable scalar multiple.  Vanishing of the second factor determines whether $k\in \Phi_1$ belongs to $K_2$ or $K_3$. Using \eqref{eq:bderbF} and~\eqref{eq:aderaF}, we have
\begin{align*}
a^2b \left( (\partial_a F)^2 + \la_1(1 - b^2)\partial_b F \right)\Big|_{\Gamma_1} &= \la_1^2 b^3(1-a^2)^2 + \la_1\la_2 a^3 (1-b^2)^2%, \quad k \in \Phi_1,
\end{align*}
and thus if $k \in \Phi_1$ (equivalently if $(a,b)\in \Gamma_1$), then \eqref{eq:D2xider} holds only if
\begin{equation}																										\label{eq:Gtilde}
\tilde{G}(a, b) = \la_1 b^3(1-a^2)^2 + \la_2 a^3 (1-b^2)^2 = 0.
\end{equation}
The function $\tilde{G}$ is symmetric in $a$ and $b$ and is rather elegant, but it turns out not to be ideal for our purposes.  Restricting our view to $k \in \Phi_1$, we are again free to add any multiple of $F$ to $\tilde{G}$ and work with that object instead. We therefore introduce 
\begin{align}																										\label{eq:GtildeGF}
G(a, b) &= \tilde{G}(a, b) + a^2 b^2 F(a, b)\\
\notag &=\om^2 a^3 b^3 + \la_1 b^3(1-3a^2+2a^3) + \la_2 a^3 (1-3b^2+2b^3).
\end{align} 
The function $G$ maintains the property that among all $k \in \Phi_1$, \eqref{eq:D2xider} holds only if $G(a, b) = 0$. We will describe some features of the set
\begin{equation}																											\label{eq:Gamma2}
\Gamma_2 = \{ (a, b) \in [-1, 1]^2 : G(a, b) = 0\}
\end{equation}																	
as an independent object before seeking out its intersection with $\Gamma_1$. The following result is an analog of Lemma~\ref{lem:Fprop}  and we provide it for the sake of completeness.
\begin{lem}																													\label{lem:Gprop}
The set $\Gamma_2$ defined in \eqref{eq:Gamma2} is nonempty and is of the form displayed in Figure~\ref{Gamma2fig}: it consists of two continuous arcs, $\Gamma_2 = \Gamma_2^1 \cup \Gamma_2^2$.
The equation $G(a, b) = 0$ defines an implicit function in $[-1, 1]^2$ that is locally continuously differentiable with respect to $a$ at any $(a, b) \in \Gamma_2$, $|b| \ne 1$. In particular, the arc $\Gamma_2^2$ represents the graph of a function which we denote by $b=B_G(a)$. The following properties hold:

(1) For any $(a, b) \in \Gamma_2^2\backslash\{(0, 0)\}$, $|b| \ne 1,$ 
\begin{equation}																						\label{eq:Gder1}
\frac {dB_G}{da}\,(a, b) = - \frac {\la_1}{\la_2} \frac {b^4(1-a^2)}{a^4(1-b^2)} \le 0,
\end{equation}
and
\begin{equation}																					  \label{eq:Gder1at0}
\frac {dB_G}{da}\,(0, 0) = - \left(\frac {\la_2}{\la_1}\right)^{1/3}.
\end{equation}

(2) The first arc, $\Gamma_2^1,$ passes through the third quadrant. The second arc, $\Gamma_2^2$, is located in the second and fourth quadrant. 

The arc $\Gamma_2^2$ is also the graph of a function $a = A_G(b)$. All of the statements (or their analogs) in this lemma remain true if $(a, A_G, \lambda_1)$ and $(b, B_G, \lambda_2)$ switch roles.
 
\begin{figure}[ht]
\begin{picture}(160,160)
\put(0,70){\vector(1,0){160}}											%axes
\put(165,70){$a$}
\put(80,0){\vector(0,1){150}}
\put(70,147){$b$}
\thinlines
\qbezier(30,120)(30,120)(130,120)									%unit square
\qbezier(30,20)(30,20)(130,20)
\qbezier(30,120)(30,120)(30,20)
\qbezier(130,120)(130,120)(130,20)
\put(73,124){$1$}
\put(134,75){$1$}
\put(14,75){$-1$}
\put(65,10){$-1$}
\thicklines
\qbezier(60,120)(70,49)(130,50)											%arches
\put(52, 90){$\Gamma_2^2$}
\qbezier(30,50)(60,50)(65,20)
\put(35, 55){$\Gamma_2^1$}
\end{picture}
\caption{Set $\Gamma_2$}
\label{Gamma2fig}
\end{figure}
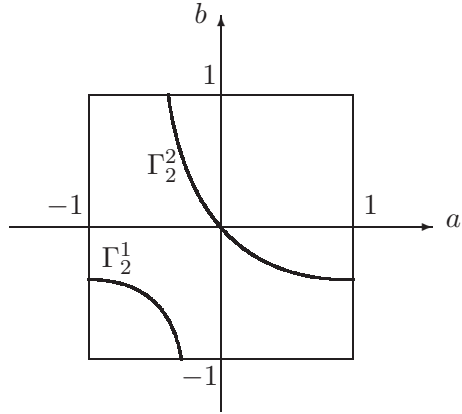
\end{lem}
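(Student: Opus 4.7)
The plan is to follow the template of Lemma~\ref{lem:Fprop}: compute the partial derivatives of $G$, simplify them on $\Gamma_2$ by eliminating $\omega^2$ via the defining equation, and then invoke the implicit function theorem away from a finite set of exceptional points. Writing $G(a,b) = \omega^2 a^3 b^3 + \lambda_1 b^3(1-a)^2(1+2a) + \lambda_2 a^3(1-b)^2(1+2b)$ and multiplying $\partial_a G$ by $a$, the substitution $\omega^2 a^3 b^3 = -\lambda_1 b^3(1-a)^2(1+2a) - \lambda_2 a^3(1-b)^2(1+2b)$ (valid on $\Gamma_2$) causes the $\lambda_2$ terms to cancel and the $\lambda_1$ terms to factor, yielding
\begin{equation*}
a\,\partial_a G\big|_{\Gamma_2} = -3\lambda_1 b^3(1-a^2), \qquad b\,\partial_b G\big|_{\Gamma_2} = -3\lambda_2 a^3(1-b^2),
\end{equation*}
where the second identity follows by the symmetry $(a,\lambda_1)\leftrightarrow(b,\lambda_2)$. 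The ratio of these produces the slope formula~\eqref{eq:Gder1} via $dB_G/da = -\partial_a G/\partial_b G$. The identities also show that $\nabla G$ can vanish on $\Gamma_2$ only when $ab = 0$; since $G(a,0) = \lambda_2 a^3$ and $G(0,b) = \lambda_1 b^3$ meet $\Gamma_2$ only at the origin, the implicit function theorem supplies local smooth graphs $b = B_G(a)$ and $a = A_G(b)$ at every point of $\Gamma_2$ except $(0,0)$ and the four edge points where $|a| = 1$ or $|b| = 1$.

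Next I would locate the arcs globally by sign analysis. In the open first quadrant every term of $G$ is strictly positive, so $G > 0$ there; the axes meet $\Gamma_2$ only at the origin. Evaluating $G$ at the corners of $[-1,1]^2$ and at a few intermediate boundary points (for instance $G(-1,-1) = \omega^2 + 4\lambda_1 + 4\lambda_2 > 0$, $G(-1,-1/2) = (\omega^2+4\lambda_1)/8 > 0$, $G(-1,0) = -\lambda_2 < 0$, $G(1,-1) = -\omega^2 - 4\lambda_2 < 0$, $G(1,-1/2) = -\omega^2/8 < 0$, $G(1,0) = \lambda_2 > 0$) shows that $G$ changes sign exactly once on each of the four edges, with the change occurring in the subinterval $b \in (-1/2,0)$ on $a = \pm 1$ and its mirror image on $b = \pm 1$. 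Combined with the non-positivity of $dB_G/da$, this forces one monotone arc $\Gamma_2^1$ in the third quadrant connecting the edge points on $a = -1$ and $b = -1$, and one monotone arc connecting the edge points on $a = 1$ and $b = 1$ by passing from the second quadrant through the origin into the fourth quadrant.

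To handle the origin itself, where $\nabla G$ vanishes, I would perform the blow-up $b = ra$ and use $G(a,ra) = a^3(\lambda_2 + \lambda_1 r^3) + O(a^4)$; dividing by $a^3$ produces a smooth equation in $(r,a)$ with nonvanishing $r$-derivative at $(r,a) = (-(\lambda_2/\lambda_1)^{1/3}, 0)$, so the implicit function theorem supplies a unique analytic branch $r(a)$ with $r(0) = -(\lambda_2/\lambda_1)^{1/3}$. This confirms the derivative value~\eqref{eq:Gder1at0} and glues the two half-arcs into the continuous curve $\Gamma_2^2$. The main technical obstacle is the algebraic reduction of $a\,\partial_a G$ to the clean factored form $-3\lambda_1 b^3(1-a^2)$ on $\Gamma_2$; once that identity is in hand, monotonicity, smoothness away from the origin, and the dual statements for $a = A_G(b)$ all follow routinely from the implicit function theorem together with the $(a,b,\lambda_1,\lambda_2)\leftrightarrow(b,a,\lambda_2,\lambda_1)$ symmetry of $G$.
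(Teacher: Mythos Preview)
Your proposal is correct and follows essentially the same approach as the paper: mimic the proof of Lemma~\ref{lem:Fprop} by deriving the identities $a\,\partial_a G|_{\Gamma_2} = -3\lambda_1 b^3(1-a^2)$ and $b\,\partial_b G|_{\Gamma_2} = -3\lambda_2 a^3(1-b^2)$ (the paper omits these details as ``common'' to the earlier lemma), then handle the origin by the blow-up $b = ra$ exactly as the paper does. Your version is simply more explicit, supplying the edge sign analysis and the first-quadrant positivity that the paper leaves implicit in the figure and the reference to Lemma~\ref{lem:Fprop}.
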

\begin{proof}
The proof is largely identical to that of Lemma~\ref{lem:Fprop} and we omit the common details.
One difference is that a slope at the origin cannot be determined from the ratio of $\frac{\partial G}{\partial a}(0,0)$ and
$\frac{\partial G}{\partial b}(0,0)$, as both quantities are zero already.  

Note that $G(0,b) = \lambda_1b^3$ is zero only if $b=0$.  When $a \not= 0$, write $r(a) = \frac{b}{a}$ to obtain the
expression
\begin{equation}
G(a,r) = a^3\big( (\lambda_1 r^3 + \lambda_2) + a^2(-3\lambda_1 r^3-3\lambda_2 r^2)
+ a^3(\omega^2 +2\lambda_1+ 2\lambda_2)r^3 \big).
\end{equation}
For a fixed value of $a$, the solutions of $G(a,r) = 0$ occur at the roots of a cubic polynomial whose coefficients
depend smoothly on $a$.  When $a=0$ the polynomial is $\lambda_1r^3 + \lambda_2$, which has a single transversal
root at $r = -(\frac{\lambda_2}{\lambda_1})^{1/3}$.  The implicit function theorem provides a neighborhood of
$a=0$ and a continuous function $r(a)$ along which $G(a,r(a)) = 0$.  

By definition $\dfrac{dB_G}{da}(0,0)
= \lim\limits_{a\to 0} r(a) = -(\frac{\lambda_2}{\lambda_1})^{1/3}$.  Once again the result is consistent with the
general implicit derivative~\eqref{eq:Gder1} because the ratio $(b^4/a^4)$ converges to $(\lambda_2/\lambda_1)^{4/3}$
as $(a,b)$ approaches the origin along $\Gamma_2$.

\end{proof}
We are most interested in the intersection points of $\Gamma_1$ and $\Gamma_2$, which are described in the following result.
\begin{lem}																													\label{lm:Ga1Ga2int}
Let the curves $\Gamma_1$ and $\Gamma_2$ be defined by \eqref{eq:Gamma1} and \eqref{eq:Gamma2}, respectively. Then 
\begin{enumerate}
\item[(i)] if $\la_1 < \la_2$, $\Gamma_1 \cap \Gamma_2 = \{(0, 0), (a^*, b^*)\}$, with $a^* < 0 < b^*$,
\item[(ii)] if $\la_1 > \la_2$, $\Gamma_1 \cap \Gamma_2 = \{(0, 0), (a^*, b^*)\}$, with $b^* < 0 < a^*$,
\item[(iii)] if $\la_1 = \la_2$, $\Gamma_1 \cap \Gamma_2 = \{(0, 0)\}$.
\end{enumerate}
In the last case, we will use the notation $(a^*, b^*) = (0, 0)$.
\end{lem}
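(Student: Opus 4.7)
My strategy reduces the intersection to a single-variable analysis along $\Gamma_1^2$. The algebraic identity $G = \tilde G + a^2 b^2 F$ from~\eqref{eq:GtildeGF} shows that on $\Gamma_1$ (where $F = 0$) the condition $G = 0$ coincides with $\tilde G = 0$, so $\Gamma_1 \cap \Gamma_2 = \{(a,b) \in \Gamma_1 : \tilde G(a,b) = 0\}$. Because $\tilde G$ from~\eqref{eq:Gtilde} is a sum of strictly positive terms on the first-quadrant arc $\Gamma_1^1$, intersections are confined to $\Gamma_1^2$, which Lemma~\ref{lem:Fprop} presents as the graph $b = B_F(a)$ for $a \in [-1, A_F(-1)]$.

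Setting $\phi(a) := \tilde G(a, B_F(a))$, I would evaluate $\phi$ at three distinguished points. First, $\phi(0) = 0$, placing $(0,0)$ in the intersection for every parameter choice. At the endpoints one summand of $\tilde G$ drops out, yielding $\phi(-1) = -\lambda_2 (1 - B_F(-1)^2)^2 < 0$ and $\phi(A_F(-1)) = -\lambda_1 (1 - A_F(-1)^2)^2 < 0$. Finally, using $B_F'(0) = -\lambda_2/\lambda_1$ from~\eqref{eq:Fder1at0}, a short Taylor expansion gives
\[
\phi(a) \;=\; \frac{\lambda_2 (\lambda_1^2 - \lambda_2^2)}{\lambda_1^2}\, a^3 \;+\; O(a^4).
\]

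Existence of the second intersection in cases (i) and (ii) is immediate from the intermediate value theorem: the sign of $\lambda_1 - \lambda_2$ makes $\phi$ positive on one side of the origin, and paired with negativity at the corresponding endpoint it produces a sign change in the correct sub-interval, giving the claimed quadrant for $(a^*, b^*)$. For case (iii) with $\lambda_1 = \lambda_2 = \lambda$, I would directly analyze $\tilde G(a,b) = 0$ in the region $ab < 0$, $|a|,|b| < 1$: writing $r = |b|/|a|$ and simplifying $\lambda |b|^3 (1-a^2)^2 = \lambda a^3 (1-b^2)^2$ gives, after dividing by $(\sqrt{r} - 1)$, the relation $r + \sqrt{r} + 1 = -a^2 r^{3/2}$, which has no positive solution unless $r = 1$, i.e. $b = -a$; but $F(a,-a) = 0$ with $\lambda_1 = \lambda_2$ forces $a = 0$. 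Hence the origin is the only intersection in (iii).

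The main obstacle is uniqueness of the non-origin intersection in cases (i) and (ii). Implicit differentiation of $\phi$ via~\eqref{eq:Fder1} yields
\[
\phi'(a) \;=\; \frac{3}{\lambda_2 a^2 (1 - B_F^2)}\Bigl[ \lambda_2^2 a^4 (1 - B_F^2)^3 \;-\; \lambda_1^2 B_F^4 (1 - a^2)^3 \Bigr],
\]
and a direct algebraic check shows that simultaneous vanishing of $\phi$ and $\phi'$ at a point with $a \ne 0$, $|B_F(a)| < 1$ forces $B_F(a) = -a$ and $\lambda_1 = \lambda_2$. Hence for $\lambda_1 \ne \lambda_2$ every non-origin zero of $\phi$ on $\Gamma_1^2$ is simple, and the sign data above already controls the parity of zeros on each sub-arc. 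The remaining work is upgrading the parity count to exactly $0$ and $1$, which I expect to follow by tracking monotonicity of the ratio $\lambda_2 a^2 (1-B_F^2)^{3/2}\big/\lambda_1 B_F^2(1-a^2)^{3/2}$ along $\Gamma_1^2$ using the explicit form of $B_F'$ from~\eqref{eq:Fder1}: this ratio controls the sign of $\phi'$ and so bounds the number of extrema of $\phi$ on each sub-arc.
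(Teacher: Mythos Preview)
Your reduction to the single-variable function $\phi(a)=\tilde G(a,B_F(a))$ along $\Gamma_1^2$ is a natural reformulation of the paper's two-curve comparison, and the pieces you carry out are correct: the confinement to $\Gamma_1^2$, the endpoint signs, the Taylor expansion at the origin, and the case $\lambda_1=\lambda_2$ all agree with the paper's argument (the paper factors $\tilde G$ instead of passing to $r=|b|/|a|$, but the content is the same). Your computation that a non-origin common zero of $\phi$ and $\phi'$ forces $b=-a$ and $\lambda_1=\lambda_2$ is also correct and matches the paper's transversality observation.

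The genuine gap is uniqueness in cases (i) and (ii), which you explicitly leave open. Parity plus simplicity of zeros is not enough: on the sub-arc where $\phi$ is negative at both ends you must rule out a pair of simple zeros, and on the other you must rule out three. Your sketched ``monotonicity of the ratio'' argument is not carried out, and it is not clear it works as stated. The paper closes this gap with a sharper observation that is already latent in your $\phi'$ formula: at any non-origin zero of $\phi$ one has
\[
\operatorname{sign}\phi'(a)\;=\;\operatorname{sign}\bigl(B_F(a)^2-a^2\bigr),
\]
so uniqueness reduces to showing $|B_F|>|a|$ (or the reverse) at every intersection on a given sub-arc. The paper supplies exactly this: on the fourth-quadrant portion of $\Gamma_1^2$ the concavity of $B_F$ together with $|B_F'(0)|=\lambda_2/\lambda_1>1$ forces $|B_F(a)|>|a|$ throughout; on the second-quadrant portion one checks $G(a,-a)<0$ for $a\in[-1,0)$, placing $\Gamma_2^2$ above the line $b=-a$, so again $|b|>|a|$ at any intersection. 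With the crossing orientation fixed on each sub-arc, multiple crossings are impossible. This geometric input is the missing idea in your argument.
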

\begin{proof}
Consider the case $\la_1 < \la_2$. The proof in the case $\la_1 > \la_2$ is identical. 

Note that since the origin belongs to both $\Gamma_1$ and $\Gamma_2$, it is obviously in their intersection. Lemma~\ref{lem:Fprop} and Lemma~\ref{lem:Gprop} show that $\Gamma_2^1 \cap \Gamma_1 = \Gamma_1^1 \cap \Gamma_2 = \emptyset$ and thus, $\Gamma_1\cap\Gamma_2 = \Gamma_2^2 \cap \Gamma_1^2$. Next, according to Lemma~\ref{lem:Fprop}, $\Gamma_1^2$ is concave, and with the assumption $\la_2 > \la_1$  its slope at zero is less than $-1$ (see equation \eqref{eq:Fder1at0}). As a result, for all $(a, b) \in \Gamma_1^2$ in the fourth quadrant, $\abs{b} > \abs{a}$, except for the origin. Define 
\begin{equation}																			\label{eq:aplus}
a^+ = \max \{a: (a, b) \in \Gamma_1 \cap \Gamma_2\}.
\end{equation}
If we assume that $\Gamma_1^2$ intersects $\Gamma_2^2$ in the fourth quadrant away from the origin, we have that $0 < a^+ < \tilde{a}$, where $\tilde{a} = A_F(-1)$. Comparing formulas \eqref{eq:Fder1} and \eqref{eq:Gder1} gives
\begin{equation}																								\label{eq:FGderivatives}
\frac {dB_G}{da}\,(a) = \frac {dB_F}{da}\,(a) \frac {b^2}{a^2}, \quad \text{ for all } (a, b) \in \Gamma_1 \cap \Gamma_2, \; (a, b) \ne (0, 0).
\end{equation}
Both derivatives are negative, and furthermore $|b| >|a|$ along this part of the curve $\Gamma_1^2$.  Consequently
%Using that for all $(a, b) \in \Gamma_1 \cap \Gamma_2$ and in the fourth quadrant, $\abs{b} > \abs{a}$, and that both derivatives are negative we have 
\begin{equation}
\frac {dB_G}{da}\,(a^+) < \frac {dB_F}{da}\,(a^+),
\end{equation}
and thus, $B_G(a) < B_F(a)$ for all $a \in (a^+, a^+ + \epsilon)$ for some small $\epsilon > 0.$ On the other hand, it follows from the results of Lemma~\ref{lem:Gprop} that $B_G(\tilde{a}) > -1 = B_F(\tilde{a})$, implying that there must exist another point $a' \in [a^+ + \epsilon, \tilde{a})$ such that $B_G(a') = B_F(a')$. This contradicts the definition of $a^+$ and we may conclude that $\Gamma_1 \cap \Gamma_2 = \{(0, 0)\}$ in the fourth quadrant.  

We claim that $\Gamma_1^2$ intersects $\Gamma_2^2$ exactly once in the second quadrant away from the origin. First, since 
\begin{equation}
\frac {dB_F}{da}\,(0) < \frac {dB_G}{da}\,(0),
\end{equation}
there is a small $\epsilon > 0$ so that $B_F(a) > B_G(a)$ for all $a \in (-\epsilon, 0)$.  However for the value $\hat{a} = A_G(1)$, one has $B_F(\hat{a}) < 1 = B_G(\hat{a})$. The Intermediate Value Theorem implies the existence of $a' \in (\hat{a}, 0)$ such that $B_F(a') = B_G(a')$, giving rise to at least one non-origin point of intersection of $\Gamma_1^2$ and $\Gamma_2^2$ in the second quadrant.

Note that $G(a,-a) = -(\omega^2 + 4\lambda_1)a^6 + (\lambda_2-\lambda_1)a^3(1 - 3a^2 -2a^3) < 0$ over the interval $a \in [-1,0)$.  Together with~\eqref{eq:Gder1at0} that implies that $\Gamma_2^2$ lies above the line $b =-a$ within the second quadrant, and it follows from~\eqref{eq:FGderivatives} that $0 > \frac{dB_F}{da}(a) > \frac{dB_G}{da}(a)$ whenever $\Gamma_1^2$ and $\Gamma_2^2$ intersect with $a < 0$.  On the other hand, if there were multiple points of intersection, the orientation of crossing would have alternating signs.  One concludes that $\Gamma_1^2 \cap \Gamma_2^2$ contains a single point $(a^*, b^*)$ in the second quadrant along with the origin.

%Introduce
%\begin{equation}																			%\label{eq:aminus}
%a^- = \min \{a: (a, b) \in \Gamma_1 \cap \Gamma_2\} \in (\hat{a}, 0).
%\end{equation}
%When $\la_2 \ne \la_1$, the sets $\Gamma_1$ and $\Gamma_2$ do not have any intersection points on the line $b = -a$ besides the origin since in this case $\tilde{G}(a, -a) \ne 0$ if $|a| > 0$ (see equation~\eqref{eq:Gtilde}). Hence, $|B_G(a^-)| \ne |a^-|$.  If we assume that $|B_G(a^-)| < |a^-|$, then the relation \eqref{eq:FGderivatives} between the two derivatives leads to a contradiction similar to what we observed in the fourth quadrant.  We assumed before that there is more than one such point, so let us consider the two consecutive ones closest to the origin, namely, 
%\begin{align*}
%a_1 &= \max \{a < 0: (a, b) \in \Gamma_1\cap \Gamma_2\},\\
%a_2 &= \max \{a < a_1: (a, b) \in \Gamma_1 \cap \Gamma_2\}.
%\end{align*} 
%Using \eqref{eq:FGderivatives} again, together with the fact that $|B_G(a_i)| > |a_i|$, $i = 1,2$, 
%we conclude $B_F(a) > B_G(a)$ for $a \in (a_2, a_2 + \epsilon)$ and $B_F(a) < B_G(a)$ for $a \in (a_1 - \epsilon, a_1)$ for some %$\epsilon > 0$. This means that there must be another point of intersection of $\Gamma_1$ and $\Gamma_2$, $(a', b')$, with %$a_2 < a' < a_1$, which is a contradiction. This completes the proof of the statement in the case $\la_1 \ne \la_2$. 

Now consider the case $\la_1 = \la_2 = \la$, and let $(a_0, b_0) \in \Gamma_1 \cap \Gamma_2$. Then $\tilde{G}(a_0, b_0)=0$, where $\tilde{G}$ is defined in \eqref{eq:Gtilde}. Since $\lambda_1$ and $\lambda_2$ are equal, $\tilde{G}$ admits the factorization 
\begin{equation}
\tilde{G}(a, b) = \la(a + b)(a^2 + b^2 + ab(a^2b^2 - 2ab - 1)).
\end{equation}
The second factor is zero if and only if $|a| = |b| = 1$, however those points do not belong to $\Gamma_1$. Hence, $b_0 = -a_0$. Plugging this into~\eqref{funcF}, one can see that when $\la_1 = \la_2$, $F(a_0, -a_0) = 0$ if and only if $a_0 = 0$.  The intersection of $\Gamma_1$ and $\Gamma_2$ at the origin is not transversal in this case, but instead the two curves are tangent without crossing. 
\end{proof}

\begin{rem}																														\label{rm:Kstar}
Introduce the set $K^*$ as the pre-image of the point $(a^*, b^*)$ under the map \eqref{eq:ab}. The set $K^*$ consists of four points on the set $\Phi_1$ that are located as shown on Figure~\ref{PhiKfig}. Note that in the case $\la_1 = \la_2$, $K^* = \left\{(\pm \frac \pi 2, \pm \frac \pi 2)\right\}$. 

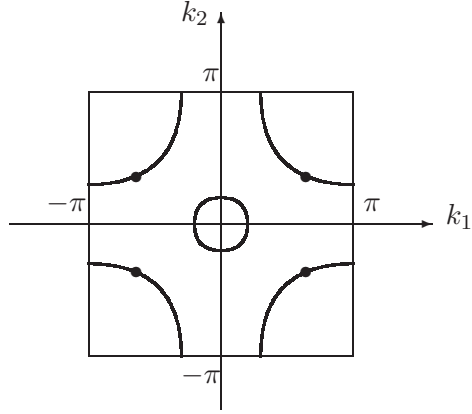
\begin{figure}[ht]
\begin{picture}(160,160)
\put(0,70){\vector(1,0){160}}											%axes
\put(165,70){$k_1$}
\put(80,0){\vector(0,1){150}}
\put(65,147){$k_2$}
\thinlines
\qbezier(30,120)(30,120)(130,120)									%square
\qbezier(30,20)(30,20)(130,20)
\qbezier(30,120)(30,120)(30,20)
\qbezier(130,120)(130,120)(130,20)
\put(73,124){$\pi$}
\put(134,75){$\pi$}
\put(14,75){$-\pi$}
\put(65,10){$-\pi$}
\thicklines
\qbezier(30,55)(65,55)(65,20)											%arches
\qbezier(30,85)(65,85)(65,120)
\qbezier(95,20)(95,55)(130,55)
\qbezier(95,120)(95,85)(130,85)
\qbezier(70,70)(70,60)(80,60)											%circle in the middle
\qbezier(90,70)(90,60)(80,60)	
\qbezier(90,70)(90,80)(80,80)	
\qbezier(80,80)(70,80)(70,70)	
\put(112,88){\circle*{4}}
\put(112,52){\circle*{4}}
\put(48,88){\circle*{4}}
\put(48,52){\circle*{4}}
\end{picture}
\caption{Set $\Phi_1$ along with the four-point set $K^*$}
\label{PhiKfig}
\end{figure} 
\end{rem}

The following is an easy consequence of Lemma~\ref{lm:Ga1Ga2int}. 

\begin{cor}																						\label{cor:Kstar}
Assume that $\det D^2\gamma(k) = 0$ and let $\xi = \xi(k)$ be an eigenvector of $D^2\gamma(k)$ corresponding to the zero eigenvalue. Let $K^*$ be the set defined in Remark \ref{rm:Kstar}. Then $\partial_{\xi}^3 \gamma (k) = 0$ if and only if $k \in K^*$.
\end{cor}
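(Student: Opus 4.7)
The plan is to combine Lemma~\ref{lm:3der}, the explicit formula \eqref{eq:detHder}, and Lemma~\ref{lm:Ga1Ga2int}, with separate care taken at the origin of the $(a,b)$-plane.

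First, since $\det D^2\gamma(k) = 0$ and $D^2\gamma(k)$ is never the zero matrix (Proposition~\ref{prop:rank1}), the zero eigenvalue of $D^2\gamma(k)$ is simple, so Lemma~\ref{lm:3der} applies and yields
\[
\partial_\xi^3 \gamma(k) = 0 \quad \text{if and only if} \quad \partial_\xi(\det D^2\gamma)(k) = 0.
\]
Next, I would appeal to the algebraic reduction already carried out in this section. Equation \eqref{eq:detHder} together with the identities \eqref{eq:bderbF} and \eqref{eq:aderaF} shows that at any $k \in \Phi_1$ with $a^2 b \ne 0$,
\[
\partial_\xi(\det D^2\gamma)(k) = 0 \quad \text{iff} \quad \tilde G(a,b) = 0 \quad \text{iff} \quad G(a,b) = 0,
\]
where the last equivalence uses $G = \tilde G + a^2 b^2 F$ together with $F(a,b) = 0$ on $\Gamma_1$. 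Consequently such $k$ satisfies $\partial_\xi^3\gamma(k) = 0$ precisely when $(a(k), b(k)) \in \Gamma_1 \cap \Gamma_2$. By Lemma~\ref{lm:Ga1Ga2int} the only intersection point with $ab \ne 0$ is $(a^*, b^*)$, whose preimage in $[-\pi,\pi]^2$ under the cosine map \eqref{eq:ab} is exactly the four-point set $K^*$ of Remark~\ref{rm:Kstar}.

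The remaining work is to handle the four preimages of the origin, namely $(\pm\pi/2, \pm\pi/2)$, where the multiplier $a^2b$ vanishes and the reduction to $G$ is invalid. At such points $\sin k_1 = \pm 1 \ne 0$, so I would substitute directly into \eqref{eq:detHder} using $\partial_a F(0,0) = -\lambda_2$ and $\partial_b F(0,0) = -\lambda_1$ to conclude that the bracket $(\partial_a F)^2 + \lambda_1(1-b^2)\,\partial_b F$ equals $\lambda_2^2 - \lambda_1^2$ there. Thus $\partial_\xi^3\gamma$ vanishes at these four points if and only if $\lambda_1 = \lambda_2$, which is exactly the symmetric case; there the convention of Remark~\ref{rm:Kstar} sets $(a^*, b^*) = (0,0)$ and declares $K^* = \{(\pm\pi/2, \pm\pi/2)\}$, so the origin preimages coincide with $K^*$. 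When $\lambda_1 \ne \lambda_2$ the bracket is nonzero, so $\partial_\xi^3\gamma \ne 0$ at these points and they are correctly excluded from $K^*$.

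The main obstacle I expect is this origin subtlety: the identity that reduces $\partial_\xi(\det D^2\gamma) = 0$ to $\tilde G(a,b) = 0$ passes through a multiplication by $a^2b$, which degenerates at the one place where $\Gamma_1$ and $\Gamma_2$ meet in both the symmetric and asymmetric cases. Ruling out a spurious extra set of degenerate critical points at $(\pm\pi/2,\pm\pi/2)$ when $\lambda_1 \ne \lambda_2$ requires the direct bracket calculation and is what aligns the algebraic picture on $\Gamma_2$ with the geometric definition of $K^*$.
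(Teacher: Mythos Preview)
Your proposal is correct and follows essentially the same route as the paper's proof: reduce $\partial_\xi^3\gamma(k)=0$ to $\partial_\xi(\det D^2\gamma)(k)=0$ via Lemma~\ref{lm:3der}, use~\eqref{eq:detHder} to convert this to the algebraic condition $\tilde G(a,b)=0$ (equivalently $G(a,b)=0$) on $\Gamma_1$ away from the origin, invoke Lemma~\ref{lm:Ga1Ga2int} to pin down the intersection, and handle the preimages of $(0,0)$ by the direct bracket computation $(\partial_a F)^2+\lambda_1(1-b^2)\partial_b F=\lambda_2^2-\lambda_1^2$. The paper organizes the argument as two separate implications rather than a single biconditional chain, but the ingredients and the treatment of the origin subtlety are the same.
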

\begin{proof}
First recall that the assumption $\det D^2\gamma(k) = 0$ is equivalent to the fact that the corresponding $(a, b) \in \Gamma_1$ (see \eqref{eq:Gamma11}). 

Assume that $\partial_{\xi}^3 \gamma (k) = 0$. In Lemma~\ref{lm:3der} we showed that under our main assumption this condition is equivalent to  \eqref{eq:D2xider}. This, in turn, implies that $G(a, b) = 0$ (or $(a, b) \in \Gamma_2$), where $(a, b)$ is an image of $k$ under the map \eqref{eq:ab}. We may therefore conclude that $(a, b) \in \Gamma_1 \cap \Gamma_2$. According to Lemma~\ref{lm:Ga1Ga2int}, in the case $\la_1 = \la_2$, $(a, b) = (a^*, b^*)$ and thus, $k \in K^*$. If $\la_1 \ne \la_2$, $(a, b)$ is either $(a^*, b^*)$ or the origin. However, a direct inspection (see equation \eqref{eq:detHder}) shows that when $(a, b) = (0, 0)$,
\begin{equation}																							\label{eq:1deratzero}
\partial_{\xi} (\det D^2 \gamma)(k) = \pm \frac {\la_1 \la_2}{\gamma^4(k)}\left( \la_1^2 - \la_2^2 \right) \ne 0,
\end{equation}
and we can exclude the origin from consideration. Thus, $(a, b) = (a^*, b^*)$ and $k \in K^*$.

The proof in the reverse direction is similar. Assume $k \in K^*$, then it corresponds to $(a^*, b^*)$. Again, in the case $\la_1 = \la_2$, $(a^*, b^*) = (0, 0)$, and by \eqref{eq:1deratzero}, $\partial_{\xi} (\det D^2 \gamma)(k)  = \partial_{\xi}^3 \gamma (k) = 0$. If $\la_1 \ne \la_2$, both $a^*$ and $b^*$ are different from zero. Comparing equations \eqref{eq:detHder} -- \eqref{eq:GtildeGF}, we see that $G(a^*, b^*)$ is a nonzero multiple of $a^*(b^*)^2 \partial_{\xi} (\det D^2 \gamma)(k)$. Since $G(a^*, b^*) = 0$ and $a^*(b^*)^2 \ne 0$, we conclude $\partial_{\xi} (\det D^2 \gamma)(k)  = \partial_{\xi}^3 \gamma (k) = 0$.

\end{proof}

\begin{lem}																														\label{lm:fourthder}
Let $\gamma$ be defined by \eqref{gammaAB} and let $k^* \in K^*$, where $K^*$ is defined in Remark~\ref{rm:Kstar}. Furthermore, let $\xi = \xi(k^*)$ be the eigenvector of the Hessian matrix of $\gamma$ at $k^*$ corresponding to the zero eigenvalue and $\xi^{\perp}$ be a vector orthogonal to $\xi$ and of the same magnitude. Then 
\begin{equation}																												\label{eq:fourthder}
\left(\partial^4_{\xi}\gamma \partial^2_{\xi^\perp} \gamma - 3 \left(\partial^2_{\xi} \partial_{\xi^\perp}\gamma\right)^2 \right)(k^*) \neq 0.
\end{equation}
\end{lem}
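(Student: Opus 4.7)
The plan is to exploit the fact that $\Gamma := \gamma^2 = \omega^2 + 2\lambda_1(1-\cos k_1) + 2\lambda_2(1-\cos k_2)$ is additively separable in $(k_1, k_2)$, so every mixed partial derivative of $\Gamma$ (i.e., $\partial_1^m \partial_2^n \Gamma$ with $m, n \ge 1$) vanishes identically. Consequently, for any orthogonal pair $\xi, \xi^\perp$ in $\R^2$ one has the clean splitting
\begin{equation*}
\partial_\xi^m \partial_{\xi^\perp}^n \Gamma(k) = \xi_1^m (\xi^\perp_1)^n\, \partial_1^{m+n}\Gamma(k) + \xi_2^m (\xi^\perp_2)^n\, \partial_2^{m+n}\Gamma(k),
\end{equation*}
and the one-variable derivatives on the right are elementary expressions in $a=\cos k_1$, $b=\cos k_2$, and the sines.

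Starting from $2\gamma\,\partial_\xi\gamma = \partial_\xi\Gamma$ and differentiating successively in the $\xi$ direction, evaluation at $k^* \in K^*$ using the two vanishings $\partial_\xi^2\gamma(k^*)=0$ and $\partial_\xi^3\gamma(k^*)=0$ (the latter from Corollary~\ref{cor:Kstar}) produces in turn
\begin{equation*}
\partial_\xi^2\Gamma(k^*) = \frac{(\partial_\xi\Gamma(k^*))^2}{2\gamma(k^*)^2}, \qquad \partial_\xi^3\Gamma(k^*) = 0, \qquad \partial_\xi^4\gamma(k^*) = \frac{\partial_\xi^4\Gamma(k^*)}{2\gamma(k^*)}.
\end{equation*}
An analogous chain involving factors of $\partial_{\xi^\perp}$ gives the standard identity
\begin{equation*}
\partial_{\xi^\perp}^2\gamma(k^*) = \frac{\partial_{\xi^\perp}^2\Gamma(k^*)}{2\gamma(k^*)} - \frac{(\partial_{\xi^\perp}\Gamma(k^*))^2}{4\gamma(k^*)^3}
\end{equation*}
and a slightly longer rational expression for $\partial_\xi^2\partial_{\xi^\perp}\gamma(k^*)$ which, once the derived constraint on $\partial_\xi^2\Gamma(k^*)$ is used, collapses into an explicit combination of lower-order $\Gamma$-derivatives at $k^*$.

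Substituting all of these formulas into the target combination $\partial_\xi^4\gamma\cdot\partial_{\xi^\perp}^2\gamma - 3(\partial_\xi^2\partial_{\xi^\perp}\gamma)^2$ at $k^*$, then expanding each $\Gamma$-derivative via the splitting above and inserting the explicit $\xi = (\partial_a F,\, \lambda_1 \sin k_1 \sin k_2)$ from~\eqref{eq:xi} together with its orthogonal counterpart $\xi^\perp$, reduces the claim to nonvanishing of an explicit polynomial in $(a^*, b^*, \sin k_1^*, \sin k_2^*, \lambda_1, \lambda_2, \omega^2)$ subject to $F(a^*,b^*)=0$ and $G(a^*,b^*)=0$. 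The symmetric case $\lambda_1=\lambda_2$ is handled by direct computation at $k^*=(\pm\pi/2,\pm\pi/2)$, where $\partial_\xi^4\gamma(k^*)=0$ but $\partial_\xi^2\partial_{\xi^\perp}\gamma(k^*)\ne 0$, yielding a strictly negative discriminant. In the asymmetric case Lemma~\ref{lm:Ga1Ga2int} forces $a^* b^* < 0$, and I expect the polynomial to retain a definite sign after eliminating $\omega^2$ via $F(a^*,b^*)=0$ and using $G(a^*,b^*)=0$ to relate $a^*$ and $b^*$. The chief obstacle is purely the algebraic bookkeeping of this final reduction and sign tracking; no input beyond the $F=G=0$ classification of Lemma~\ref{lm:Ga1Ga2int} should be required.
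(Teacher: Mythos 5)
Your proposed route — working with the separable function $\Gamma = \gamma^2$ and expanding $\gamma$-derivatives at $k^*$ via the chain rule — is sound bookkeeping as far as it goes: the reductions $\partial_\xi^2\Gamma(k^*) = (\partial_\xi\Gamma(k^*))^2/(2\gamma^2)$, $\partial_\xi^3\Gamma(k^*)=0$, and $\partial_\xi^4\gamma(k^*) = \partial_\xi^4\Gamma(k^*)/(2\gamma)$ are all correct given $\partial_\xi^2\gamma(k^*)=\partial_\xi\partial_{\xi^\perp}\gamma(k^*)=\partial_\xi^3\gamma(k^*)=0$, and your treatment of the symmetric case $\lambda_1=\lambda_2$ (where $\cos k_j^*=0$ makes $\partial_\xi^4\gamma(k^*)=0$ while $\partial_\xi^2\partial_{\xi^\perp}\gamma(k^*)\ne 0$) is fine. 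But the asymmetric case $\lambda_1\ne\lambda_2$ is left as a conjecture: you write that you ``expect the polynomial to retain a definite sign'' after eliminating $\omega^2$ using $F=0$ and relating $a^*,b^*$ via $G=0$, and that the only obstacle is ``algebraic bookkeeping.'' That is not a proof; the sign-definiteness of that polynomial on the variety $\{F=G=0\}$ is precisely the content of the lemma, and nothing in the proposal demonstrates it.

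The paper sidesteps this with two observations you haven't used. First, since $(\partial_\xi^2\partial_{\xi^\perp}\gamma)^2 \ge 0$, it suffices to show the \emph{smaller} quantity $\partial_\xi^4\gamma\,\partial_{\xi^\perp}^2\gamma - 2(\partial_\xi^2\partial_{\xi^\perp}\gamma)^2$ is strictly negative. Second, differentiating the $2\times 2$ determinant identity $\|\xi\|^4\det D^2\gamma = \partial_\xi^2\gamma\,\partial_{\xi^\perp}^2\gamma - (\partial_\xi\partial_{\xi^\perp}\gamma)^2$ twice in the $\xi$ direction and evaluating at $k^*$ shows this quantity equals $\|\xi\|^4\,\partial_\xi^2\det D^2\gamma(k^*)$. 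Since $\det D^2\gamma = \lambda_1\lambda_2\gamma^{-4}F$ and $F$ vanishes along $\Phi_1$, one is reduced to computing $\partial_\xi^2 F(k^*)$, which has the closed form
\[
(\partial_\xi^2 F)(k^*) = 2\|\xi\|^2\gamma^2(k^*)\,\frac{a^*b^*(1-(a^*)^2)(1-(b^*)^2)}{(a^*)^2(1-(b^*)^2)+(b^*)^2(1-(a^*)^2)}
\]
when $\lambda_1\ne\lambda_2$; the sign is now immediate from $a^*b^*<0$ (Lemma~\ref{lm:Ga1Ga2int}). That reduction — trading the unwieldy fourth-order combination for $\partial_\xi^2 F$ — is exactly the idea your brute-force plan is missing, and without some comparable simplification I do not think the ``algebraic bookkeeping'' you defer is routine.
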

\begin{proof}
To prove inequality \eqref{eq:fourthder}, it is enough to show that
\begin{equation}
\left(\partial^4_{\xi}\gamma \partial^2_{\xi^\perp} \gamma - 2 \left(\partial^2_{\xi} \partial_{\xi^\perp}\gamma\right)^2 \right)(k^*) < 0.
\end{equation} 
The above expression admits the following short representation that we will use in our calculations, 
\begin{equation}																						\label{eq:det2der}
\left(\partial^4_{\xi}\gamma \partial^2_{\xi^\perp} \gamma - 2 \left(\partial^2_{\xi} \partial_{\xi^\perp}\gamma\right)^2 \right)(k^*) = \|\xi\|^4\left(\partial^2_{\xi} \det D^2\gamma\right)(k^*).
\end{equation}
Let us first prove \eqref{eq:det2der}. Indeed, we can re-write $\det D^2\gamma$ in the new coordinates as
\begin{equation}
\|\xi\|^4\det D^2\gamma = \partial^2_{\xi} \gamma \partial^2_{\xi^\perp} \gamma - \left(\partial_{\xi} \partial_{\xi^\perp} \gamma\right)^2.
\end{equation}
Differentiating the above equation we obtain
\begin{equation*}
\|\xi\|^4 \partial^2_{\xi} \det D^2\gamma = \partial^4_{\xi} \gamma \partial^2_{\xi^\perp} \gamma + 2 \partial^3_{\xi} \gamma \partial_{\xi} \partial^2_{\xi^\perp} \gamma + \partial^2_{\xi} \gamma \partial^2_{\xi} \partial^2_{\xi^\perp} \gamma - 2 \left(\partial^2_{\xi} \partial_{\xi^\perp}\gamma\right)^2 - 2 \partial_{\xi} \partial_{\xi^\perp} \gamma \partial^3_{\xi} \partial_{\xi^\perp} \gamma.
\end{equation*}
At the point $k^*$, the quantities $\partial^2_{\xi} \gamma$, $\partial_{\xi} \partial_{\xi^\perp} \gamma$, and $\partial^3_{\xi} \gamma$ all vanish, thus the second, third and fifth term of the above equation vanish as well, proving \eqref{eq:det2der}. 

Next, it is easy to see that 
\begin{equation}
\left(\partial^2_{\xi} \det D^2\gamma\right)(k^*) = \left(\partial^2_{\xi} F\right)(k^*) \frac {\la_1 \la_2}{ \gamma^4(k^*)},
\end{equation}
where $F$ is defined in \eqref{funcF}. 
Finally, a direct calculation shows that in the case $\la_1 \ne \la_2$,
\begin{equation}
\left(\partial^2_{\xi} F \right)(k^*) = 2 \|\xi\|^2 \gamma^2(k^*) \frac{a^* b^* (1-(a^*)^2) (1-(b^*)^2)}{(a^*)^2 (1-(b^*)^2) + (b^*)^2 (1-(a^*)^2)} < 0,
\end{equation}
and in the case $\la_1 = \la_2 =\la$,
\begin{equation}
\left(\partial^2_{\xi} F \right)(k^*) = -\|\xi\|^2 (\omega^2 + 2(\lambda_1 + \lambda_2)) < 0,
\end{equation}
finishing the proof. 
\end{proof}

\subsection{Proof of Proposition~\ref{prop:Vpicture}}
The curves of $\Phi_1$ consist of points where $\det D^2\gamma(k) = 0$, which are also the points
where the ``velocity map" $\mathcal{V}(k) = \nabla\gamma(k)$ does not satisfy the hypotheses
of the inverse function theorem.  As a result the boundary of $\mathcal{V}([-\pi, \pi]^2)$
must be a subset of  $\Psi_1 \cup \Psi_2 = V(\Phi_1)$ as defined in Proposition~\ref{prop:Vpicture}.

Recall from Remark~\ref{rem:Phi} that $\Phi_1$ has one closed curve around zero
and a second closed curve around the point $(\pi, \pi)$.  Let $\Psi_1$ be the image
of the former under the velocity map and let $\Psi_2$ be the image of the latter.
The analysis of $\Psi_1$ is more straightforward because the
points of $K^*$ are not involved.

In vector form, the velocity map is 
\begin{equation*}
\nabla \gamma(k) =
\frac{1}{\gamma(k)}\begin{pmatrix}\lambda_1 \sin k_1 \\ \lambda_2 \sin k_2\end{pmatrix}.
\end{equation*}
Thus points $k$ in a given ``quadrant'' of the torus $[-\pi,\pi]^2$ are mapped to the same
quadrant of $\R^2$.

The tangent line to $\Phi_1$ always points in the direction normal to
$\nabla \det D^2\gamma(k)$, which by~\eqref{eq:gradH} is also normal to
$\nabla F(k)$.  Suppose $k$ travels along $\Phi_1$ with instantaneous velocity
$\begin{pmatrix} -\hskip-.12in&\partial_bF \sin k_2 \\
&\partial_a F \sin k_1 \end{pmatrix}$.  Then $k$ follows either loop of $\Phi_1$
through
the four quadrants of the compactified torus in order, and $\mathcal{V}(k)$
must wind once around the origin.

At a local level, the differential $d\mathcal{V}(k)$ is the Hessian
matrix $D^2\gamma(k)$, whose image is spanned by the direction $\xi^\perp$.
Plugging~\eqref{eq:HessianMatrix1} into the Leibniz rule we determine that
$\mathcal{V}(k)$ moves with velocity
\begin{equation}
\begin{aligned}
\frac 1 {\gamma^3(k)} 
&\begin{pmatrix}
\la_1 \partial_b F & -\la_1 \la_2 \sin k_1 \sin k_2\\  -\la_1 \la_2 \sin k_1 \sin k_2 & \la_2 \partial_a F
\end{pmatrix}
\begin{pmatrix}
-\partial_bF \sin k_2 \\ \partial_a F \sin k_1
\end{pmatrix} \\
& \hskip 1.5 in= 
\frac{1}{\gamma^3(k)}
\begin{pmatrix}
-\lambda_1(\partial_b F)^2 -\lambda_1\lambda_2 (1-a^2) \partial_a F \sin k_2 \\
\lambda_1\lambda_2 (1-b^2) \partial_b F \sin k_1 + \lambda_2(\partial_a F)^2 \sin k_2
\end{pmatrix} \\
&\hskip 1.5in = 
\frac{\lambda_1 \lambda_2}{\gamma^3(k)} 
\left[ \frac{\lambda_1 b^3(1-a^2)^2 + \lambda_2 a^3(1-b^2)^2}{a^2b} \right]
\begin{pmatrix}
-\frac{a}{b} \sin k_2 \\ \sin k_1
\end{pmatrix} \\
&\hskip 1.5in = 
\frac{\lambda_1 \lambda_2}{\gamma^3(k)} 
\left[ \frac{\tilde{G}(a,b)}{a^2b} \right]
\begin{pmatrix}
-\frac{a}{b} \sin k_2 \\ \sin k_1
\end{pmatrix}.
\end{aligned}
\end{equation}
Identities~\eqref{eq:bderbF} and~\eqref{eq:aderaF} are used multiple times
between the second and third lines.

The prefactor of $\lambda_1 \lambda_2 \gamma^{-3}(k)$ is positive for all $k$.
The factor of $\tilde{G}/(a^2b)$ is strictly positive as $k$ traces out the loop of
$\Phi_1$ circling the origin because $a,b > 0$ here.  The discussion leading
up to Lemma~\ref{lm:Ga1Ga2int} and Corollary~\ref{cor:Kstar} shows that
for general $k \in \Phi_1$, the value of $\tilde{G}/(a^2b)$ changes sign when
$k$ crosses a point of $K_3$ and at no other time.

The vector with components $(-\frac{a}{b} \sin k_2, \sin k_1)$ points in the
direction of $\xi^\perp$ and does not vanish while $k \in \Phi_1$
(the points where $a=b=0$ are handled by~\eqref{eq:Fder1at0}).
Indeed one could choose this vector as the definition of $\xi^\perp(k)$.
Consider $\xi^\perp$ as measured in polar coordinates. 
The path of $\mathcal{V}(k)$ turns to the left or the right depending on
whether the polar angle of $\xi^\perp$ is increasing or decreasing with $k$.
The direction of change
for this angle in turn depends on the sign of the determinant
\begin{equation*}
\det \begin{pmatrix} 
-\frac{a}{b} \sin k_2 & -\big(\lambda_2 \frac{a(1-b^2)^2}{b^2}
 + \lambda_1\frac{1-a^2}{b}\big)\sin k_1 \\
\sin k_1 & -\lambda_2\frac{a^2(1-b^2)}{b}\sin k_2 \end{pmatrix}
\ = \
\frac{1}{b^2} \big( \lambda_2a(1-b^2)^2 + \lambda_1 b(1-a^2)^2\big).
\end{equation*}
The left column of the $2\times 2$ matrix above is $\xi^\perp(k)$.  The right column is
its rate of change as $k$ travels along $\Phi_1$ at the prescribed velocity, computed
via the product
$\begin{pmatrix} \frac{\sin k_1 \sin k_2}{b} & -\frac{a}{b^2} \\a & 0\end{pmatrix}
\begin{pmatrix} -\partial_b F \sin k_2 \\ \partial_a F \sin k_1 \end{pmatrix}$.

In the analysis of~\eqref{eq:Fder2}, this quantity is shown to have the same
sign as $ab$, which is positive on the loop of $\Phi_1$ corresponding to $\Psi_1$
and negative on the loop corresponding to $\Psi_2$.  

One last note is that the sign of $\frac{a}{b}$ is constant on the connected
components of $\Phi_1$, so the direction of $\xi^\perp$ winds exactly once
around the origin on each loop.  Combined with the preceding facts, it follows
that $\Psi_1$ is a simple closed curve enclosing a convex region in $\R^2$,
and that $\Psi_2$ is a simple closed curve composed of four arcs with the
opposite curvature from $\Psi_1$.  These arcs intersect at cusps located at
the points $V_3$ corresponding to values of $k \in K_3$ where
$\tilde{G}/(a^2b)$ changes sign.

It remains to be shown that $\Psi_1$ is the boundary of 
$\mathcal{V}([-\pi, \pi]^2)$ and that $\Psi_1$, $\Psi_2$ are disjoint.
By comparing supplementary angles, it is clear that the extreme values of
$\mathcal{V}(k)$ in any given direction must occur while $|k_1|, |k_2| \leq
\frac{\pi}{2}$.  With the exception of
$k = (\pm \frac{\pi}{2}, \pm \frac{\pi}{2})$, all points where
$\mathcal{V}(k) \in \Psi_2$ satisfy $\cos k_1 \cos k_2 < 0$, so one 
of the coordinates is necessarily greater than $\frac{\pi}{2}$.  
When $|k_1| = |k_2| = \frac{\pi}{2}$, the vector
$\xi^\perp$ which spans the image of $D\mathcal{V}$ happens to be collinear
with $\mathcal{V}(k)$, so these choices for $k$ do not generate
extreme points of $\mathcal{V}([-\pi,\pi]^2)$ in their respective directions.
By process of elimination, the boundary must consist of $\Psi_1$ alone.

\end{document}